\newcommand{\beq}{\begin{equation}}
\newcommand{\eeq}{\end{equation}}
\newcommand{\beqs}{\begin{equation*}}
\newcommand{\eeqs}{\end{equation*}}
\newcommand{\ben}{\begin{eqnarray}}
\newcommand{\een}{\end{eqnarray}}
\newcommand{\beno}{\begin{eqnarray*}}
\newcommand{\eeno}{\end{eqnarray*}}
\renewcommand{\Re}{{\rm Re}\,}
\renewcommand{\Im}{{\rm Im}\,}
\renewcommand{\div}{{\rm div}\,}
\newcommand{\Id}{{\rm Id}\,}
\newcommand{\Supp}{{\rm Supp}\,}
\newcommand{\Rmnum}[1]{\uppercase\expandafter{\romannumeral #1} }
 \numberwithin{equation}{section}
\newtheorem{thm}{Theorem}[section]
\newtheorem{lem}[thm]{Lemma}
\newtheorem{prop}[thm]{Proposition}
\newtheorem{rmk}[thm]{Remark}
\newtheorem{cor}[thm]{Corollary}
\def\curl{\mathop{\rm curl}\nolimits}
\def \d {\mathrm {d}}
\def\cF{{\mathcal F}}
\def\cP{{\mathcal P}}
\let\f=\frac
\def \p {\partial}
\def\pa {\partial^{\alpha}}
\def\ep{\varepsilon}
\def \ltr{\langle t\rangle}
\def \ltsr{\langle t-s \rangle}
\def \lsr{\langle s\rangle}
\def \vep {\varepsilon}
\def \pab {\partial _{\xi}^{\alpha}\partial _{\eta}^{\beta}}
\def\lnr {\langle \nabla \rangle}
\def \lxr{\langle \xi \rangle}
\def \ler{\langle \eta \rangle}
\def \lxmer{\langle \xi-\eta \rangle}
\def \lxper{\langle \xi+\eta \rangle}
\def\lemsr{\langle \eta-\sigma \rangle}
\def\lsir {\langle \sigma \rangle}
\def \pt {\partial_{t}}
\def \vr {\varrho}
\def\R {\mathcal{R}}
\def \si {\sigma}
\def \hdb {\dot{\Delta}}
\def \epd {\varepsilon \Delta}
\def\tphi {\tilde{\phi}}
\def \na{\nabla}
\def \lp {\lambda_{+}}
\def \lm {\lambda_{-}}
\def \define {\triangleq}
\def \kpz {\kappa_{0}}
\def \izt {\int_{0}^{t}}
\def \fd {\frac{\delta}{5}}
\def \lesim {\lesssim}
\begin{document}
\title{Long-term regularity of two dimensional Navier-Stokes-Poisson equations}
\author{Changzhen Sun}

\address{Laboratoire de Math\'ematiques d'Orsay (UMR 8628), Univ. Paris-Sud, CNRS, Universit\'e Paris-Saclay, 91405 Orsay Cedex, France}
\email{changzhen.sun@universite-paris-saclay.fr}
\maketitle
\begin{abstract}
This article is devoted to the long-term regularity of the 2-d Navier-Stokes-Poisson system. We allow the initial density to be close to a constant and the potential part of the initial velocity to be small independently of the rescaled viscosity  parameter $\varepsilon$ while the rotational part of the initial velocity is assumed to be small compared to  $\varepsilon$.  
We then show that the lifespan of the system $T^{\varepsilon}$ satisfies $T^{\varepsilon}>\varepsilon^{-(1-\vartheta)}$,
where the small constant $\vartheta$ is the size of the initial perturbation in some suitable space. 
The normal form transformation and the classical parabolic energy estimates are the main ingredients of the proof.
\end{abstract}

\section{Introduction}

In the present paper, we are concerned with the large time regularity of the scaled two dimensional Navier-Stokes-Poisson system, which is a hydrodynamical model of  
plasma describing the dynamics of  electrons and ions that interact with their self-consistent electric field. If the motion of ions is negligible, the dynamics of electrons can be described as the following Navier-Stokes-Poisson (NSP) system:
\beq \label{NSPO}
 \left\{
\begin{array}{l}
\displaystyle \pt \rho^{\varepsilon} +\div( \rho^{\varepsilon} u^\varepsilon)=0,\\
\displaystyle \pt ( \rho^{\varepsilon} u^{\varepsilon} )+\div(\rho^{\varepsilon}u^{\varepsilon}\otimes u^{\varepsilon} )-\varepsilon \mathcal{L} u^{\varepsilon}+
\nabla P(\rho^{\varepsilon})-\rho^{\varepsilon}\nabla \varphi ^{\varepsilon}=0 ,  \\
\displaystyle \Delta \varphi^{\varepsilon} =\rho ^{\varepsilon}-1,\\
\displaystyle u|_{t=0} =u_0^{\varepsilon} ,\rho|_{t=0}=\rho_0^{\varepsilon}.
\end{array}
\right.
\eeq
Here $(t,x)\in \mathbb {R}_{+}\times \mathbb{R}^2,$ the unkowns $\rho^{\varepsilon} (t,x)\in \mathbb{R}_{+}, u^{\varepsilon}\in \mathbb{R}^{2},\na\varphi^{\varepsilon}\in\mathbb{R}^{2}$ are the electron density, the  electron velocity and the  self-consistent electric field respectively. The thermal pressure $P(\rho^{\varepsilon})$ is assumed to follow a polytropic $\gamma$-law:  $P(\rho^{\varepsilon})=\f{(\rho^{\varepsilon})^{\gamma}}{\gamma},\gamma>1$, while the viscous term is under the form
 $ \mathcal{L}u^{\varepsilon}=\mu \Delta u^{\varepsilon}+(\mu+\lambda)\na \div u^{\varepsilon}$, where the Lam\'e coefficients  $\mu,\lambda$ are supposed to be constants which satisfy the condition:
 $\mu>0, 2\mu+\lambda>0.$ For the conciseness of the presentation, we shall assume $\mu=1,\lambda=0$ and $\gamma=2$,
 since there are no specific cancellations arising from this choice.
 Note also that the scaled parameter $\varepsilon$ in front of the diffusion term is the inverse of the Reynolds number and is assumed to be small in this paper, that is $\ep\in(0,1].$
 
There has been extensive studies concerning the global well-posedness of (NSP) under small and smooth perturbations of the constant equilibrium
 ($(\rho^{\ep},u^{\ep})=(1,0)$)
  when the scaled parameter $\ep=1$ and the spatial dimension $d=3$.
  We refer for example to  \cite{MR2609958} \cite{MR2917409}, where the global existence in $H^{N}$ for $N\geq 4$ is proved under the assumption that the initial perturbation is small in $H^{3}$. Besides, the smoothing effect of the diffusion term allows them to prove some time decay rate for the perturbation. In \cite{MR2609958}, the global existence of \eqref{NSPO} is obtained in hybrid Besov spaces when the initial perturbation is close to equilibrium in  critical $L^2$ norm by considering low  and high frequencies
    differently, inspired by the former work on compressible Navier-Stokes equations \cite{MR1779621}. This result was then generalized to the  critical $L^{p}$ spaces \cite{MR3695805,MR3926073,MR2914601}. 
    Although these works are done in spatial dimension $d=3$, global existence in $H^N$ ($N\geq 3$) for $d=2$ could still be proved by the same arguments as in \cite{MR2917409}.
    That is, by using the dissipation for $u$ provided by the diffusion term $\ep\Delta u^{\ep}$ and the damping for $\rho^{\ep}-1$ resulting from the coupling structure, one could control the nonlinear term as long as some lower order Sobolev norm of $(\rho^{\ep}-1,u^{\ep},\na\varphi^{\ep})$ is small. Nevertheless, if we consider the scaled equations, that is 
    $\varepsilon\in(0,1],$ 
    the above strategy requires the initial perturbation $(\rho_0^{\ep}-1,u_0^{\ep},\na\varphi_0^{\ep})$ to be small proportional to $\varepsilon$ in some suitable Sobolev spaces.
    
  On the other hand, when $\ep=0$, the system \eqref{NSPO} reduces to the so-called Euler-Poisson (EP) equation. Regarding (EP),  Guo \cite{MR1637856}  constructs in dimension d=3
  the global smooth solutions close to the reference equilibrium $(1,0)$ under neutral ($\int_{\mathbb{R}^2}(\rho_{0}^{\ep}-1)\d x=0$), irrotational, small perturbation to the equilibrium. The good dispersive properties due to the presence of the electric field and the normal form transformation technique developed by Shatah \cite{MR803256}  
  are the main two ingredients of his proof. More recently, similar results was obtained in dimension $d=2$ by Ionescu-Pausader
  \cite{MR3024265} and Li-Wu \cite{MR3274788} independently and $d=1$ by Guo-Han-Zhang \cite{MR3595365}. See also the result about the large time regularity of 2-d (EP) on the torus \cite{MR3927088}.
  
 Nevertheless, since in practical physics, the Reynolds number is usually very high (that is $\ep$ very small), it is natural to ask global existence results that hold uniformly in $\ep$. 
 In \cite{rousset2019stability}, we successfully combine the parabolic energy estimate which works for (NSP) and normal form transformation used in the works for (EP) to prove a uniform stability result for 3-d (NSP) system. That is, 
   we construct the global smooth solutions around the constant equilibria $(1,0)$ with a smallness assumption on the perturbation which is independent of $\ep$ except for the curl part of the velocity (recall that for $\ep= 0$  we have global smooth solutions only for irrotational data). In this paper, we aim to prove the analogous results in 2-d. However, due to the weaker dispersion in 2-d, the rotational part of the velocity is driven by a source term whose $L_x^2$ norm enjoys only at best the critical time decay  $(1+t)^{-1}$.  Consequently, the rotational part of the velocity has a logarithmic growth which prevents one from establishing the global existence.
 Therefore, in this paper, we only devote ourselves to proving a large time existence result of 2-d (NSP) system.
     
   We shall denote  by   $\mathcal{P}=Id-\na \Delta^{-1} \div$ the Leray projector which projects a vector to its divergence-free (or rotational) part. Denote also $\mathcal{P}^{\perp}=Id-\mathcal{P},$ which projects a vector field to its curl-free (or potential) part.   
\begin{thm}\label{maintheorem}
 There exist two constants $\vartheta_0,C,$ such that for 
 any $\varepsilon\in(0,1],\mathcal{\vartheta}\in(0,\vartheta_0],$
 if  the following assumption holds:
 \beno
 \|(\rho_0^{\ep}-1,\cP^{\perp}u_{0}^{\ep},\na\varphi_0^{\ep})\|_{Y^4}\leq \f{1}{C}\vartheta, \qquad
 \|\cP u_0^{\ep}\|_{H^{3}}\leq \vartheta \ep,
 \eeno
 where the $Y^4$ norm is defined in \eqref{def of initial norm 0},
 then the system \eqref{NSPO} admits a solution 
 in $C([0,T),H^3)$ with  $T>\ep^{-(1-\vartheta)}$. 
\end{thm}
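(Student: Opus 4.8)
The plan is to reformulate \eqref{NSPO} as a quasilinear symmetric hyperbolic-parabolic system for the perturbation $(a^{\ep},u^{\ep})$ with $a^{\ep}=\rho^{\ep}-1$, then run a bootstrap argument on the time interval $[0,\ep^{-(1-\vartheta)})$ using two coupled estimates: a normal-form / dispersive estimate controlling a low-order norm of the ``dispersive unknown'' (the combination of $a^{\ep}$ and $\cP^{\perp}u^{\ep}$ governed by the Klein--Gordon-type operator coming from $\Delta\varphi^{\ep}=a^{\ep}$), and a parabolic energy estimate at the $H^3$ level. First I would write the potential part of the system in terms of a normalized variable $h^{\ep}$ solving an equation of the form $\partial_t h^{\ep} + i\langle\nabla\rangle_{\ep} h^{\ep} - \ep\Delta h^{\ep} = \mathcal{Q}(h^{\ep},h^{\ep}) + (\text{remainder})$, where $\langle\nabla\rangle_{\ep}$ is the (rescaled) dispersion relation $\sqrt{1-\Delta}$-type symbol associated to the Poisson coupling, and $\mathcal{Q}$ is a quadratic nonlinearity with symbol satisfying bounds with respect to $\ppp$, $\pmp$, etc. (this is exactly why the macros for the various bracket combinations are set up). Applying the normal form (Shatah's trick / Poincaré--Dulac), one removes the quadratic term at the price of a boundary term and a cubic term, provided the relevant resonance denominator $\pmm$ (or $\pmp$) is bounded below — in 2-d this is where one must be careful, and the absence of time resonances for the $++$ and $+-$ interactions (except a harmless space resonance near $\xi=\eta$ or low frequencies) is what makes the scheme close.

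The second ingredient is the parabolic energy estimate: testing the symmetrized system against $\langle\nabla\rangle^{3}$ derivatives, one gets $\frac{d}{dt}\cE_3 + \ep\|\nabla u^{\ep}\|_{H^3}^2 + (\text{damping on }a^{\ep}) \lesssim \|(\nabla u^{\ep},a^{\ep})\|_{L^\infty}\cE_3 + \dots$, where $\cE_3\sim \|(a^{\ep},u^{\ep})\|_{H^3}^2$. The key point, as in \cite{rousset2019stability}, is that the $L^\infty$-in-space factor multiplying the energy must be \emph{time-integrable up to the lifespan}: the dispersive/normal-form estimate should give $\|(\text{low-order dispersive part})(t)\|_{L^\infty_x}\lesssim \vartheta\langle t\rangle^{-1}$ (critical decay in 2-d), so that $\int_0^{T}\|\cdot\|_{L^\infty}\,dt \lesssim \vartheta\log T \lesssim \vartheta(1-\vartheta)\log(1/\ep)$, and hence $\cE_3(t)\lesssim \vartheta^2\exp(C\vartheta\log(1/\ep)) = \vartheta^2\ep^{-C\vartheta}$; this stays $\ll 1$ on $[0,\ep^{-(1-\vartheta)})$ after possibly shrinking $\vartheta_0$, which closes the bootstrap. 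I would handle the rotational part $\cP u^{\ep}$ separately: it satisfies a forced heat equation $\partial_t \cP u^{\ep} - \ep\Delta \cP u^{\ep} = \cP(\text{nonlinear})$ with no dispersion, its data being $O(\vartheta\ep)$; the forcing is quadratic in quantities already controlled, and because the source's $L^2_x$ norm decays only like $\langle t\rangle^{-1}$, integrating against the heat semigroup produces the logarithmic growth flagged in the introduction — but multiplied by the small factor, so $\|\cP u^{\ep}\|_{H^3}$ stays $O(\vartheta\ep\log(1/\ep))$, still within the bootstrap window (this is precisely why one cannot push to $T=\infty$).

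The main obstacle I expect is the combined normal-form plus energy closure in the \emph{low-dispersion 2-d setting with $\ep$-dependent dispersion relation}: one must (i) track the $\ep$-dependence uniformly through the bilinear Fourier multiplier estimates for $\mathcal{Q}$ after division by the resonance function — showing the transformed bilinear symbol still obeys Coifman--Meyer / Hörmander-type bounds uniformly in $\ep\in(0,1]$, including on the singular sets where $\ep\Delta$ gives no help; (ii) show the boundary (``normal form'') term $\mathcal{B}(h^{\ep},h^{\ep})$ is controlled by the energy with a \emph{small} constant so it can be absorbed, which typically requires it to cost fewer derivatives than $\cE_3$ or to carry a factor of $\vartheta$; and (iii) propagate just enough regularity/decay on the dispersive variable to get the $\langle t\rangle^{-1}$ $L^\infty$ bound — this usually needs a weighted ($x\cdot\nabla$ or scaling vector field) norm in addition to $H^3$, and one must check the vector field commutes well enough with both $\ep\Delta$ and the quadratic nonlinearity. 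The interplay between the finite lifespan $\ep^{-(1-\vartheta)}$, the $\log(1/\ep)$ coming from critical decay, and the smallness $\vartheta$ is delicate but, as the statement's exponent $1-\vartheta$ suggests, it is exactly balanced; verifying that balance rigorously is the crux.
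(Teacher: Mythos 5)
Your proposal is the direct coupled bootstrap on the full system (dispersive/normal-form control of the potential part plus $H^3$ parabolic energy, with $\cP u^{\ep}$ tracked as a forced heat equation inside the same bootstrap). This is exactly the ``natural attempt'' that the paper describes in the introduction and then deliberately abandons, and the reason it abandons it is the step your sketch leaves unresolved: once the rotational part is present, the quadratic nonlinearity driving the dispersive unknown contains mixed terms such as $\na\vr\cdot\cP u^{\ep}$ and $\cP u^{\ep}\cdot\na\,\cP^{\perp}u^{\ep}$ in which one factor carries no oscillation. For these terms the resonance denominator degenerates (the phase is $b(\xi)-\mu b(\xi-\eta)$, which vanishes on a large set), so Shatah's normal form does not apply; one would instead need genuine decay or weighted-norm control of $\cP u^{\ep}$ itself, and $\cP u^{\ep}$ has no decay mechanism uniform in $\ep$ (it sees only the $\ep\Delta$ smoothing, and its source has merely critical $(1+t)^{-1}$ decay in $L^2_x$), while its weighted norms grow on the time scale $\ep^{-(1-\vartheta)}$. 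Your proposal names this as ``the main obstacle'' but offers no mechanism to close it, whereas it is precisely the crux. A secondary quantitative slip: the claim that $\mathscr{E}_3\lesssim\vartheta^2\ep^{-C\vartheta}$ ``stays $\ll 1$ after shrinking $\vartheta_0$'' is false uniformly in $\ep$ (for fixed $\vartheta$ it diverges as $\ep\to 0$); such growth can only be tolerated if, as in the paper's $X_T$ norm, the top Sobolev norm is allowed a $(1+t)^{\delta}$ loss and all decay is extracted from lower-order dispersive components, which again brings you back to the unresolved mixed-interaction problem.

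The paper's route is structurally different and avoids the obstacle altogether: it writes $(\rho^{\ep},u^{\ep},\na\varphi^{\ep})=(\rho,u,\na\varphi)+(n,v,\na\psi)$, where $(\rho,u,\na\varphi)$ solves the auxiliary viscous system \eqref{NSPlow} with the purely irrotational data $(\rho_0^{\ep}-1,\cP^{\perp}u_0^{\ep})$. Irrotationality propagates for that system, so the entire dispersive machinery (three-zone frequency splitting, normal form with the complex phase $\phi=i\phi_1+\ep(|\xi|^2-|\xi-\eta|^2-|\eta|^2)$, weighted profile norms) is run with no rotational input at all, yielding global existence and uniform $(1+t)^{-1}$ decay (Theorem \ref{thmlow}). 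All rotational effects and all potential--rotational interactions are confined to the remainder system \eqref{NSPP0}, whose data are $O(\vartheta\ep)$ and whose dangerous source $\ep(\f{1}{\rho+n}-1)(\mathcal{L}v+\mathcal{L}u)$ is both $\ep$-small and critically decaying; that system is closed by elementary $H^3$ energy estimates and Gr\"onwall, giving $\mathscr{E}_3(t)\lesssim\vartheta^2\ep^{2-\vartheta}$ and hence the lifespan $T>\ep^{-(1-\vartheta)}$ (Theorem \ref{thmper}), from which Theorem \ref{maintheorem} follows immediately. To repair your argument you would either have to supply a uniform-in-$\ep$ treatment of the non-resonant-phase-free mixed bilinear terms in the weighted/dispersive norms, or adopt a decomposition of this type that prevents those terms from entering the dispersive estimates in the first place.
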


\begin{rmk}
We remark that the assumption we add on the rotational part of the initial velocity, that is, $\mathcal{P}u_0^{\ep}$ small proportional to $\ep$ is rather natural. Indeed, as the rotational part $\mathcal{P}u^{\ep}$ is driven by a source term of size $\ep$,  even if we assume it to vanish initially
, a rotational part of size $\ep$ is instantaneously created.
\end{rmk}

\begin{rmk}
As explained before, the rotational part of the velocity satisfies at the leading order a heat equation with a source term of size $\varepsilon$ and critical time decay in  $L_x^2,$ so that we can not even extend its existence time to $\varepsilon^{-1}.$ 
\end{rmk}
  
A natural attempt to prove Theorem \ref{maintheorem} is to consider the highly coupled equations for the potential part and rotational part of the velocity respectively. One expects to use the dispersive property from the coupled equations (see \eqref{eqU} below) for density and potential part of velocity to prove the time decay of $\rho^{\ep}-1$ and $\mathcal{P}^{\perp}u^{\ep},$ and to use the smoothing effect to prove the large time existence of of the rotational part $\mathcal{P}u.$ Nevertheless, there will be some difficulties stemming from the interactions between the rotational part and the potential part. To be more precise, on one hand, by the dispersive estimate, the $L_{x}^{\infty}$ norm of the solution to $\eqref{eqU}$ decays at best at rate $(1+t)^{-1}.$
On the other hand, since $\mathcal{P}u^{\ep}$ is governed by the heat equation with a source term $\ep\mathcal{P}\big[(\f{1}{\rho^{\ep}}-1)\Delta\mathcal{P}^{\perp}u^{\ep}\big],$ whose $L_x^2$ norm has at best critical time decay $(1+t)^{-1}$,
it is far from being bounded in $L_x^2,$ 
which in turn prevents us from proving the time decay of $(\rho^{\ep}-1, \mathcal{P}^{\perp}u^{\ep})$ in $L_x^{\infty}.$ 
Moreover, owing to the presence of the diffusion term, the eigenvalues $\lambda_{\pm}=\ep\Delta\pm i\sqrt{\lnr^2-(\ep\Delta)^2}$ of the linearized matrix for the dispersive part of the system are far from $\pm i\lnr$ $-$ the eigenvalues for (EP). It seems necessary to cut the frequency to isolate the dispersive effects and dissipation effects, which forces us to control the interactions between different frequencies. 
We prefer not to take this way since it is more sophisticated to treat the potential-rotational interactions and low-high frequencies interactions in the same time.

Another attempt is to write the solution of NSP $(\rho^{\ep}-1,\na\varphi^{\ep},u^{\ep})$ by that of EP $(\rho^{0},\na\varphi^{0},u^{0})$ plus a remainder, and try to control the remainder by the dissipation term.
However in that case the equation satisfied by the remainder has source term $\ep \Delta u^{0}$ which has size $\ep$ but without any decay, this forces the remainder to grow linearly (in $L_x^2$) and leads to
the time existence only at order $O(1).$ 

We shall thus adopt the same strategy developed in \cite{rousset2019stability} where the uniform stability problem for 3d NSP is investigated. More precisely, we split the (NSP) into two viscous systems, with initial data $(\rho_0^{\varepsilon}-1,\na \varphi_0^{\varepsilon},\mathcal{P}^{\perp}u_0^{\varepsilon})$ and $(0,0,\mathcal{P}u_0^{\varepsilon})$ respectively. The first one  will have  global solutions under $\ep$-independent assumptions on the initial data $(\rho_0^{\varepsilon}-1,\na \varphi_0^{\varepsilon},\mathcal{P}^{\perp}u_0^{\varepsilon})$ and the solutions
will enjoy the same time decay as the  2-d (EP) system.
 The other is  the perturbation of the original system \eqref{NSPO} by the former one. The source term $\ep (\rho-1)\Delta u$ in this system  is small compared to
  $\ep$ and has critical decay in $L_x^2$. We can thus get the desired lifespan by merely energy estimates. More precisely, we write the solution $(\rho^{\varepsilon},u^{\varepsilon},\na\varphi^{\varepsilon})$
  of (NSP) as
  $$(\rho^{\varepsilon},u^{\varepsilon},\na\varphi^{\varepsilon})  =(\rho,u,\na\varphi)+(n,v,\na\psi),$$
 where
$(\rho,\na\phi,u)$ and $(n,\na\psi,v)$ are the solutions of the following systems 
 \beq \label{NSPlow}
 \left\{
\begin{array}{l}
\displaystyle \pt \rho +\div( \rho u)=0,\\
\displaystyle \pt u+u \cdot {\na u}-\varepsilon \mathcal{L} u+
\nabla \rho-\nabla \varphi=0 ,  \\
\displaystyle \Delta \varphi =\rho -1,\\
\displaystyle u|_{t=0}=u_0= \mathcal{P}^{\perp}u_0^{\varepsilon}, \rho|_{t=0}=\rho_0=\rho_0^{\varepsilon}.
\end{array}
\right.
\eeq
 \beq \label{NSPP0}
 \left\{
\begin{array}{l}
\displaystyle \pt n +\div( \rho v+nu+nv)=0,\\
\displaystyle \pt v+u\cdot {\na v}+v\cdot (\na u+\na v)-\varepsilon\mathcal{L}v +\na n -\na \psi
=\varepsilon(\f{1}{\rho+n}-1) (\mathcal{L}v+\mathcal{L}u),   \\
\displaystyle \Delta \psi=n,\\
\displaystyle v|_{t=0} =\mathcal{P}u_0^{\varepsilon}, n|_{t=0}=0.
\end{array}
\right.
\eeq
Note that we skip the $\varepsilon$ dependence of the solutions in our notation for the last two systems. We also point out that we choose this kind of splitting mainly to ensure that the smooth solutions of the first system remain irrotational which is crucial to establish the global existence.
As we shall see below,
 system \eqref{NSPlow} is a good viscous approximation of the Euler-Poisson system, in the sense that the linear part of this system  has the same decay properties  for low  frequencies as  the (EP) system, that is  for localized initial data, the $L_x^{p}$ norm of $\na(\rho-1,\na \phi,u)$ decay like $(1+t)^{-(1-\f{2}{p})}$ uniformly for $\ep\in(0,1]$.

To prove the global existence of \eqref{NSPlow}, we shall use the following norm for the initial data: 
\beq\label{def of initial norm 0}
\begin{aligned}
\|(\rho_0-1,u_0,\na\varphi_0)\|_{Y^{\si}}&\define \|(\rho_0-1,u_0,\na\varphi_0)^{L}\|_{W^{\si+4,1}}+\|x(\rho_0-1,u_0,\na\varphi_0)^{L}\|_{H^{\sigma+4+\delta}}\\
&\qquad+\|x(\rho_0-1,u_0,\na\varphi_0)^h\|_{L^2}+\|(\rho_0-1,u_0,\na\varphi_0)\|_{H^{11+2\si}}
\end{aligned}
\eeq
where $\si\geq 0$  is a positive parameter and $\delta=\f{1}{1000}.$

We now state our results for system \eqref{NSPlow} and \eqref{NSPP0}:
\begin{thm}\label{thmlow}
Let $\sigma\geq 0.$
There exist two constants $C_1>0$, $\vartheta_1>0$ 
such that for any $\varepsilon\in(0,1],$ any $\bar{\vartheta}\in(0,\vartheta_1]$ if $$\|(u_0,\rho_0-1,\na\varphi_0)\|_{Y^{\si}}\leq \bar{\vartheta},$$
then the system \eqref{NSPlow} admits a global solution $(u,\vr,\na\varphi)$ in $C([0,\infty),H^{\sigma+7})$, which enjoys the uniform (in $\ep$) time decay: for any $t>0,$
\beno
(1+t)\|(\rho-1,\na u,\na\varphi)(t)\|_{W^{\si,\infty}}+\|(\rho-1,\na u,\na\varphi)(t)\|_{H^{\sigma+7}}\leq C_1\bar{\vartheta}.
\eeno
\end{thm}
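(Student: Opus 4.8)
The plan is to perform a bootstrap (continuation) argument combining the parabolic high-order energy estimate with the dispersive decay of the Euler--Poisson part, the latter obtained via a normal form transformation. First I would recast \eqref{NSPlow} in diagonalized form: setting $\rho-1=\vr$ and writing the potential part of $u$ through $U\define \lnr^{-1}(-\div)u + i\vr$ (so that $U$ solves a Klein--Gordon-type equation $\pt U = \lambda_+ U + (\text{quadratic and higher})$ with $\lambda_\pm=\ep\Delta\pm i\sqrt{\lnr^2-(\ep\Delta)^2}$), the linear flow has symbol close to $e^{it\lnr}$ for low frequencies with an extra parabolic damping $e^{\ep t\Delta}$. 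Since the data are irrotational and the first system preserves irrotationality (this is exactly why the splitting was chosen), $u$ stays curl-free, so the unknown is genuinely the scalar-type vector $(\vr,U)$ satisfying a quasilinear dispersive system with a quadratic nonlinearity.

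The core of the argument is the decay estimate. I would set up the bootstrap norm
\[
\mathcal{N}(T)\define \sup_{t\in[0,T]}\Big[(1+t)\|(\vr,\na u,\na\varphi)(t)\|_{W^{\si,\infty}}+\|(\vr,\na u,\na\varphi)(t)\|_{H^{\si+7}}+\|(S-1)\text{-weighted quantities}\|\Big],
\]
more precisely tracking also the weighted energy $\|x f^L(t)\|_{H^{\si+4+\delta}}$ and $\|x f^h(t)\|_{L^2}$ along the (modified) profile, and assume $\mathcal{N}(T)\le 2C_1\bar\vartheta$. The high-frequency part is controlled purely by parabolic energy estimates: the dissipation $\ep\|\na u\|_{H^{\si+7}}^2$ together with the damping on $\vr$ coming from the $\na\vr-\na\varphi$ coupling (which, after diagonalization, gives a genuine $\ep$-independent exponential decay $e^{-ct}$ on high frequencies, since $\Delta^{-1}$ turns the zeroth-order coupling into an effective damping) closes the $H^{\si+7}$ estimate, absorbing the nonlinear terms because $\|(\vr,\na u)\|_{W^{\si,\infty}}\lesssim \bar\vartheta(1+t)^{-1}$ is integrable only logarithmically --- so one must be slightly careful and instead use that the quadratic terms pair favorably, or interpolate to gain a genuine integrable rate for the relevant products. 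For the low-frequency/dispersive part I would Duhamel-expand the profile $g(t)\define e^{-t\lambda_+}U(t)$, apply the normal form (bilinear) transformation of Shatah type to remove the non-resonant quadratic terms at the cost of cubic terms and a bilinear boundary correction, then use the dispersive $L^1\to L^\infty$ estimate for $e^{it\lnr}e^{\ep t\Delta}$ localized to low frequencies, which gives the $(1+t)^{-(1-2/p)}$ rate uniformly in $\ep$ (the parabolic factor only helps), together with control of $x g$ to handle the stationary-phase/space-time resonance contributions.

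The main obstacle I expect is precisely the 2-d weakness of dispersion: the quadratic nonlinearity produces, after the normal form, terms whose $L^2_x$ norm decays only at the critical rate $(1+t)^{-1}$, so the naive Duhamel integral diverges logarithmically. Overcoming this requires exploiting the null/resonance structure: the space-time resonant set for the interaction of two $\lnr$-waves in 2-d must be shown to be compatible with the symbol of the nonlinearity (i.e. the quadratic symbol vanishes or is small on the resonant set), so that a $Z$-norm type estimate or an integration-by-parts in time on the resonant part closes with an extra $(1+t)^{-\delta}$ or absorbs into the energy. This is the one place where the $\si+4+\delta$ weighted regularity in the $Y^\si$ norm and the separation of $f^L$ (high weighted regularity, $W^{\si+4,1}$ and $x$-weighted $H$) from $f^h$ (only $xf^h\in L^2$) are genuinely used, and where the extra room compared to the critical threshold is spent. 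Once the decay is in hand, the energy estimate and the weighted estimate close routinely, and a standard continuation argument upgrades the a priori bound $\mathcal{N}(T)\le 2C_1\bar\vartheta$ to $\mathcal{N}(T)\le C_1\bar\vartheta$ for $\bar\vartheta\le\vartheta_1$ small, yielding the global solution with the stated decay.
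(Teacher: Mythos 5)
Your high-level architecture matches the paper's (use of the preserved irrotationality, symmetrization/diagonalization with eigenvalues $\lambda_{\pm}=-\ep|\xi|^2\pm ib(\xi)$, normal form plus low-frequency dispersive estimates uniform in $\ep$, weighted profile norms, parabolic/exponential decay at viscous frequencies, bootstrap). However, there is a genuine gap at exactly the point the paper identifies as the 2-d obstruction: you only split frequencies into $f^L$ and $f^h$ at the viscous scale. With that two-region splitting, the low-frequency output of the low$\times$high interaction cannot be closed at the critical rate: the dispersive Duhamel bound gives $\int_0^t(1+t-s)^{-1}\|U^LU^h\|_{W^{2,1}}\,\d s$, and since with your norm $U^h$ decays no better than $(1+s)^{-1}$ in $L^2$-based norms (low$\times$low can output high frequency in a two-region splitting), this integral is $\sim (1+t)^{-1}\log(1+t)$, so the $(1+t)^{-1}$ bound on $\|(\vr,\na u,\na\varphi)\|_{W^{\si,\infty}}$ does not close -- this is precisely \eqref{eq0} in the introduction. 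The paper's fix, which your proposal lacks, is the trichotomy $\chi^l,\chi^m,\chi^h$: since $\chi^l(\xi-\eta)\chi^l(\eta)\chi^h(\xi)=0$, the highest piece decays like $(1+t)^{-(2-5\delta)}$, the intermediate piece inherits fast decay from the uniform spectral gap $\Re\lambda_\pm\le -c(\kpz)$ there, and only then does the $L\times H\to L$ contribution become integrable. Relatedly, your space-time-resonance discussion is framed around a vanishing symbol on a resonant set, whereas here $\phi_1=b(\xi)-b(\xi-\eta)-b(\eta)$ simply never vanishes on the low-frequency support; the real work is the weighted (vector-field-type) norm on the profile and the trick of absorbing the viscosity into the complex phase $\phi=i\phi_1+\ep(|\xi|^2-|\xi-\eta|^2-|\eta|^2)$, without which the term $\ep\Delta\beta^L$ (critical $L^2$ decay) re-enters after each normal form; your remark that ``the parabolic factor only helps'' passes over this.

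A second, smaller but real, issue is your claim that the $H^{\sigma+7}$ energy estimate closes by parabolic estimates: the Gr\"onwall coefficient $\|\na u\|_{L^\infty}+\|\vr\|_{L^\infty}\lesssim\bar\vartheta(1+t)^{-1}$ is not integrable, so a single-tier energy norm can only yield growth $(1+t)^{C\bar\vartheta}$, not boundedness. The paper resolves this by a two-tier regularity scheme built into $Y^\si$ and $X_T$: the top norm $\|U\|_{H^{11+2\si}}$ is allowed to grow like $(1+t)^{2\delta}$, while the bounded $H^{\si+7}$ control is recovered at lower regularity for $U^L$ through the Duhamel formula and the bilinear estimates (Lemma \ref{lem bilinear estimate}, Corollary \ref{cortrilinear}), and for $U^m,U^h$ through their decay. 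Your bootstrap norm, which carries only the bounded $H^{\si+7}$ slot, cannot close as stated; you would need to add the higher, slowly growing energy level (this is also why the hypothesis demands $H^{11+2\si}$ data) and the derivative-losing bilinear machinery to transfer boundedness down to $H^{\si+7}$.
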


Once the above theorem proved, 
Theorem \ref{maintheorem} is an easy consequence of the following one:
\begin{thm}\label{thmper}
Let $(\rho,u,\na\phi)$ be the solutions constructed in the Theorem \ref{thmlow} with $\sigma=4$. There exists $C>1,\vartheta_0\in(0,\vartheta_1] $, such that for any $\varepsilon\in(0,1],\vartheta\in(0,\vartheta_0],$ if the following assumption holds: $$\|(\rho_0^{\varepsilon}-1,\mathcal{P}^{\perp}u_0^{\varepsilon},\na\varphi_0^{\ep})\|_{Y^{4}}\leq \f{\vartheta}{C},\quad \|\mathcal{P}u_{0}^{\ep}\|_{H^3}\leq \vartheta\ep,$$
then the system \eqref{NSPP0} admits a solution in $C([0,T),H^3)$ with $T>\ep^{-(1-\vartheta)}$.
\end{thm}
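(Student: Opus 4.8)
The plan is to treat the solution $(\rho,u,\na\phi)$ of \eqref{NSPlow} furnished by Theorem \ref{thmlow} (with $\sigma=4$) as a known, slowly decaying background, so that $\langle t\rangle\|(\rho-1,\na u,\na\phi)(t)\|_{W^{4,\infty}}+\|(\rho-1,\na u,\na\phi)(t)\|_{H^{11}}\le C_1\bar\vartheta$, where $\bar\vartheta$ is the $Y^4$-size of $(\rho_0^\varepsilon-1,\cP^\perp u_0^\varepsilon,\na\varphi_0^\varepsilon)$, hence $\bar\vartheta\le\vartheta/C$ by hypothesis, and to run a continuity argument for the $H^3$-energy of $(n,v)$. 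For fixed $\varepsilon$, \eqref{NSPP0} is a transport equation for $n$ coupled to a uniformly parabolic equation for $v$ whose coefficients depend on this smooth background and on $\rho+n$ (bounded away from $0$ while $\|n\|_{L^\infty}<\tfrac12$), so the usual scheme yields a maximal solution $(n,v)\in C([0,T^\varepsilon_*),H^3)$ together with a continuation criterion. On $[0,T]$ with $T\le\varepsilon^{-(1-\vartheta)}$ I assume
\beq\label{plan-boot}
\sup_{[0,T]}\|(n,v)(t)\|_{H^3}+\Big(\int_0^T\varepsilon\|\na v\|_{H^3}^2\,\d t\Big)^{\frac12}\le 2C_*\,\varepsilon\,\langle t\rangle^{C\bar\vartheta},
\eeq
and will improve $2C_*$ to $C_*$, once $C_*$ is fixed large, $C$ is as in the hypothesis, and $\vartheta_0$ is small.

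The main step is the energy estimate. Applying $\pa$, $|\ap|\le3$, to \eqref{NSPP0}, pairing with $\pa(n,v)$ in the symmetrized way (density tested against $\pa n$, momentum against $\pa v$, so that the $\na n$–$\div v$ coupling closes up to one derivative on the background) and summing, write $\mathcal E(t)=\|(n,v)(t)\|_{H^3}^2$ and $b=(\rho+n)^{-1}$. The term $-\varepsilon\mathcal Lv$ produces the good dissipation $c\,\varepsilon\|\na v\|_{H^3}^2$ (with $\mu=1,\lambda=0$ the $\na\div$ part is harmless). Every remaining coupling term — $u\cdot\na v$, $v\cdot\na u$, $\div(\rho v+nu)$, $\na\psi$ with $\Delta\psi=n$, and all their commutators — carries at most four derivatives on the background factor, so that factor is placed in $L^\infty$ with the decay $\lesssim\bar\vartheta\langle t\rangle^{-1}$ of Theorem \ref{thmlow} (this is exactly why $\sigma=4$ is imposed), the $(n,v)$-factors staying in $L^2$; these are all bounded by $C\bar\vartheta\langle t\rangle^{-1}\mathcal E$. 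The genuinely nonlinear terms $v\cdot\na v$, $\div(nv)$ are cubic, $\lesssim\mathcal E^{3/2}$. For the forcing, $-\varepsilon\mathcal Lv-\varepsilon(b-1)\mathcal Lv=-\varepsilon b\,\mathcal Lv$ and $\varepsilon(b-1)\Delta v=\varepsilon\,\div((b-1)\na v)-\varepsilon\,\na b\cdot\na v$: the divergence piece is absorbed into $\tfrac c2\varepsilon\|\na v\|_{H^3}^2$ via $\|b-1\|_{L^\infty}\le\tfrac12$, while the second piece is handled at top order by moving one derivative by parts onto $v$ — the resulting $\na^4v$ costing only $\varepsilon^{1/2}(\varepsilon\|\na v\|_{H^3}^2)^{1/2}$, which is legitimate because $\na b\cdot\na v\in H^2$ ($\na b\in H^2$ as $n\in H^3$) — producing $\varepsilon(\bar\vartheta+\mathcal E^{1/2})^2\mathcal E$. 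Finally $\varepsilon(b-1)\mathcal Lu$, with $b-1=-(\rho-1)-n+\cO((\rho-1+n)^2)$, has $H^3$-norm $\lesssim\varepsilon\bar\vartheta\langle t\rangle^{-1}(\bar\vartheta+\mathcal E^{1/2})$ upon placing the high derivatives on $\Delta u$, which decays in $W^{4,\infty}$. Collecting,
\beq\label{plan-ineq}
\tfrac{\d}{\d t}\mathcal E+c\,\varepsilon\|\na v\|_{H^3}^2\le C\big(\bar\vartheta\langle t\rangle^{-1}+\varepsilon\bar\vartheta^2+\mathcal E^{1/2}\big)\mathcal E+C\,\varepsilon\bar\vartheta^2\langle t\rangle^{-1}\mathcal E^{1/2}.
\eeq

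To close the bootstrap, set $F=\mathcal E^{1/2}$, so \eqref{plan-ineq} gives $F'\le C(\bar\vartheta\langle t\rangle^{-1}+\varepsilon\bar\vartheta^2+F)F+C\varepsilon\bar\vartheta^2\langle t\rangle^{-1}$. Inserting \eqref{plan-boot} into the coefficient, for $t\le T\le\varepsilon^{-(1-\vartheta)}$ one has $\int_0^t(C\bar\vartheta\langle s\rangle^{-1}+C\varepsilon\bar\vartheta^2+2CC_*\varepsilon\langle s\rangle^{C\bar\vartheta})\,\d s\le C\bar\vartheta\log\langle t\rangle+C_\#$, where $C_\#$ is an absolute constant bounding $\bar\vartheta^2\varepsilon^{\vartheta}+2CC_*\varepsilon^{\vartheta-C\bar\vartheta(1-\vartheta)}\le\bar\vartheta^2+2CC_*$, the exponent $\vartheta-C\bar\vartheta(1-\vartheta)$ being $\ge0$ precisely because $C\bar\vartheta\le\vartheta$. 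Grönwall, with $F(0)=\|\cP u_0^\varepsilon\|_{H^3}\le\vartheta\varepsilon$, then yields $F(t)\le e^{C_\#}\langle t\rangle^{C\bar\vartheta}\big(F(0)+C\varepsilon\bar\vartheta^2\log\langle t\rangle\big)\le e^{C_\#}\langle t\rangle^{C\bar\vartheta}\,\varepsilon\big(\vartheta+C\bar\vartheta^2\log(1/\varepsilon)\big)$, and since $\sup_{\varepsilon\in(0,1]}\varepsilon\log(1/\varepsilon)<\infty$, choosing $C_*>2e^{C_\#}$ and then $\vartheta_0$ small (depending only on $C_*,C$) gives $F(t)\le C_*\varepsilon\langle t\rangle^{C\bar\vartheta}$; integrating \eqref{plan-ineq} improves the dissipation term of \eqref{plan-boot} in the same way. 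Hence $\|n\|_{L^\infty}<\tfrac12$ throughout, the continuation criterion never triggers, and the solution persists on $[0,\varepsilon^{-(1-\vartheta)})$. The crux is the competition between the amplitude $\varepsilon$ and the merely \emph{critical} decay $\langle t\rangle^{-1}$ of both the forcing $\varepsilon(b-1)\mathcal Lu$ and the background-coupling terms: this pushes the perturbation energy up to size $\varepsilon\langle t\rangle^{C\bar\vartheta}$, whose accumulated quadratic self-interaction $\sim\varepsilon\langle t\rangle^{1+C\bar\vartheta}$ is controlled only for $t\lesssim\varepsilon^{-1/(1+C\bar\vartheta)}$ — which is $\ge\varepsilon^{-(1-\vartheta)}$ by the calibration $C\bar\vartheta\le\vartheta$, and, lying strictly below $\varepsilon^{-1}$, is no better than a large-time result. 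The subsidiary difficulty, making the variable-coefficient viscous term admissible at the limited $H^3$ regularity of $n$, is exactly what the integration-by-parts-into-dissipation device above addresses.
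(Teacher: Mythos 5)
Your overall route is the same as the paper's (treat the Theorem \ref{thmlow} background as a decaying coefficient, do $H^3$ energy estimates for $(n,v)$, bootstrap, and balance the $\ep$-amplitude against the critical $\langle t\rangle^{-1}$ decay), but two steps do not hold as written. First, your energy is $\mathcal E=\|(n,v)\|_{H^3}^2$ only, and you list the electrostatic term $\na\psi$ among couplings that ``carry a background factor'' and are bounded by $\bar\vartheta\langle t\rangle^{-1}\mathcal E$. That term is \emph{linear} in the perturbation: testing $-\na\psi$ against $\p^{\alpha}v$ gives $\int\p^{\alpha}\na\psi\cdot\p^{\alpha}v$, which has no decaying factor, and at $\alpha=0$ is not even controlled by $\mathcal E$ since $\na\psi=\na\Delta^{-1}n$ loses a full derivative at low frequency in 2d; for $|\alpha|\ge1$ it would still produce an $O(1)\,\mathcal E$ term and hence exponential growth, ruining the lifespan. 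This is exactly why the paper's functional $\mathscr E_3$ contains $\|\p^\alpha\na\psi\|_{L^2}^2$ (and the weight $\rho+n$ on $|\p^\alpha v|^2$): the coupling is then converted, via $\pt n=-\div(\rho v+nu+nv)$ and the Poisson equation, into an exact time derivative plus commutator terms that do carry the background or are cubic (see the treatment of $F_3$, $F_5$). Your claimed energy inequality is therefore not established for this term, and the fix requires changing the functional, not just an extra estimate.

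Second, the closing of the bootstrap does not work with your constants. (a) In the Duhamel step you drop the $\langle s\rangle^{-C\bar\vartheta}$ weight, get a factor $\ep\bar\vartheta^2\log\langle t\rangle\sim\ep\bar\vartheta^2\log(1/\ep)$, and then invoke $\sup_{\ep}\ep\log(1/\ep)<\infty$; but there is no spare $\ep$ to absorb the logarithm — the $\ep$ present is the amplitude you must keep (retaining the weight gives $\ep\bar\vartheta$ and no log, so this part is repairable). (b) More seriously, your $C_\#$ is defined to dominate $2CC_*\ep^{\vartheta-C\bar\vartheta(1-\vartheta)}\le 2CC_*$ and you then require $C_*>2e^{C_\#}$, which is circular and cannot be satisfied; equivalently, with a bootstrap amplitude $C_*\ep$ where $C_*$ is a large absolute constant, the quadratic self-interaction contributes a factor of size $CC_*\ep^{\vartheta-C\bar\vartheta(1-\vartheta)}$, which is bounded but not small uniformly in $\ep\in(0,1]$ (take $\ep$ near $1$), so the continuity argument does not improve the constant. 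The paper avoids this by keeping the smallness in the amplitude of the bootstrap, $\mathscr E_3\le 4\vartheta^2\ep^{2-\vartheta}$, so that the cubic term integrates to $O(\vartheta^3\ep^{2-\vartheta})$ over $t\le\ep^{-(1-\vartheta)}$ and is absorbed by choosing $\vartheta_0$ small; you should calibrate your ansatz as $F\lesssim\vartheta\,\ep\,\langle t\rangle^{C\bar\vartheta}$ (or the paper's $\ep^{1-\vartheta/2}$ variant) rather than $C_*\ep$ with $C_*$ large, and gain the needed smallness from the extra factor of $\vartheta$.
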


Since Theorem \ref{thmper} is quite easy to obtain by merely energy estimates, we shall only explain the difficulties and strategies for proving Theorem \ref{thmlow}.

 We introduce the new unknown $U=(\f{\lnr}{|\na|}(\rho-1),\f{\div}{|\na|} u)$. Using the curl-free condition, it suffices for us to consider the system:
 \begin{equation}\label{disperpart}
 \pt U+AU=F(U, U), \quad
A=\left(
  \begin{array}{cc}
    0&\lnr\\
    -\lnr&-2\vep \Delta\\
  \end{array}
\right).
 \end{equation}
where $F$ is a quadratic form  defined in \eqref{eqU}. 
Simple computations shows that the eigenvalues of $\hat{A}(\xi)$ are
$\lambda_{\pm}=-\ep|\xi|^{2}\pm \sqrt{1+|\xi|^{2}-\ep^2|\xi|^{4}}\define -\ep|\xi|^{2}\pm b(\xi)$.
To present our ideas about decay estimate, we thus consider the toy model (we change the nonlinear term for clarity since there will be no loss of derivative in energy estimate):
   \beqs
   \left\{
   \begin{array}{c}
   \displaystyle\pt\beta-\epd \beta +i b(D)\beta=\beta^2 \\
   \displaystyle\beta|_{t=0}=\beta_{0}
   \end{array}
   \right.
   \eeqs
   where $\beta\in \mathbb{C}^{2}$.
 As indicated in the 3d case \cite{rousset2019stability}, we need to consider different frequencies to isolate the dispersive effects and dissipation effects. 
 On one hand, when we focus on low frequency
 (say $\ep |\xi|^{2}\lesim \kpz$ where $\kpz$ is very small but independent of $\ep$), 
 $e^{\ep t\Delta}$ is useless since we want to get estimate uniformly for $\ep,$
 but we can expect that $e^{itb(D)}$  behave very like $e^{it\lnr}$, which shall provide us the dispersive estimate uniformly in $\ep\in(0,1]$. On the other hand, when we focus on the high frequency (say $\ep |\xi|^{2}\geq \kpz$), we have
that $\Re \lambda_{\pm}\leq -c(\kpz)$
where $c(\kpz)$ is independent of $\ep\in(0,1]$, the operator $e^{t\lambda_{\pm}(D)}$ can
provide exponential decay, we thus expect the solution has good decay even in $L_x^{2}$ norm.

In practice, we first try to introduce some norms which indicate the decay properties for both low and high frequency. In order to do this, let us choose a compactly supported function $\chi$, which equals to 1 on the unit ball $B_{1}(\mathbb{R}^{2})$ and vanishes outside $B_{2}(\mathbb{R}^{3}).$ Denote then $\chi^{L}(\xi)=\chi(\sqrt{\f{\ep}{\kpz}}\xi)$ , $\chi^{H}=1-\chi^{L}$
where $\kpz$ is a threshold to be chosen later.
We define the norm (the reason for evolving the weighted norm will be explained later):
$$\|\beta\|_{X_T}=\sup_{t\in [0,T)}\ltr\|\beta^{L}(t)\|_{W^{1,\infty}}+\|xe^{itb(D)}\beta^{L}\|_{H^4}
+(1+t)\|\beta^{H}(t)\|_{H^9}+\ltr^{-\delta}\|\beta(t)\|_{H^{10}}+\|\beta(t)\|_{H^8}.$$
where $\beta^{L}=\chi^{L}(D)\beta, \beta^{H}=\chi^{H}(D)\beta.$
Note that in the definition of the norm, we have time decay of rate $(1+t)^{-1}$ rather than $e^{-ct}$ for high frequency due to the weak decay property for low frequency.
To get the a priori estimate, we need to consider several interactions between different frequencies. 
However, due to the slow decay provided by dispersive estimate for low frequency, the low frequency output of the interactions between the low frequency and the high frequency  is difficult to close.

More precisely, in order to estimate the low frequency, by rewriting $\beta^{2}=(\beta^{L})^{2}+2\beta^{L}\beta^{H}+(\beta^{H})^2$, we need to estimate the term $\izt e^{\lambda_{-}(D)(t-s)}\chi^{L}(D)(\beta^{H}\beta^{L})(s)\d s$ which can be estimated as, by dispersive estimate for $e^{itb(D)}\chi^{L}$  (see Lemma \ref{lem dispersive})
\ben
&&\|\izt e^{\lambda_{-}(D)(t-s)}\chi^{L}(D)(\beta^{H}\beta^{L})(s)\d s\|_{L^{\infty}}\lesim \izt (1+t-s)^{-1}\|\beta^{H}\beta^{L}\|_{W^{2,1}}\d s\nonumber\\
&\qquad&\lesim
\izt (1+t-s)^{-1}(1+s)^{-1}\d s\|\beta\|_{X}^{2}\lesim (1+t)^{-\iota}\|U\|_{X}^{2}\label{eq0}
\een
where $0<\iota <1$. Unfortunately, the desired case
$\iota=1$ is not true. To overcome this difficulty,
we need more accurate splitting of frequencies.
We observe that one can indeed split the frequency into three parts, namely, lowest frequency: $\{\ep|\xi|^2\leq \kpz\}$, intermediate frequency $\{\f{\kpz}{2}\leq\ep|\xi|^2\leq 3\kpz\}$ and highest frequency $\{\ep|\xi|^2\geq \f{5}{2}\kpz\}$.
In this way, due to the lack of interaction lowest$\times$ lowest $\rightarrow$ highest, we could expect the highest frequency enjoys faster decay. What is more, the intermediate frequency part has also good decay since on this region, we  have $e^{t\lambda_{\pm}(\xi)}\leq e^{-ct}$ for some $c>0$ independent of $\ep$. The lowest frequency is now manageable since for the lowest$\times$intermediate$\rightarrow$ lowest interaction we could use normal form transformation
(see details below)
by noticing that the intermediate frequency still lies in the region that dispersive property holds. To summarize, after some crude analysis, we expect the lowest frequency part enjoys the 
$L_x^{\infty}$ decay of $(1+t)^{-1}$, the intermediate part enjoy the $L_x^{p}$ $(2\leq p\leq 4)$ decay like $(1+t)^{-(2-\f{2}{p})}$, and the high frequency parts has $L_x^2$ decay like $(1+t)^{-2}$. We explain for instance the high frequency case. By choosing three smooth radial function $\chi^{l},\chi^{m},\chi^{h}$ which satisfies $\chi^{l}(\xi-\eta)\chi^{l}(\eta)\chi^{h}(\xi)=0$ (see the definition in Section 2) 
 and defining $\chi^{L}=\chi^{l}+\chi^{m}, \beta^{L}=\beta^{l}+\beta^{m}$, one can  write $(\beta^2)^{h}=\big(2\beta^{L}\beta^{h}+(\beta^{h})^2+(\beta^m)^{2}+2\beta^{l}\beta^{m}\big)^{h}$.
We could expect the worst part $\beta^l\beta^m$ enjoys $L^2$ decay of $(1+t)^{-2}$.

We thus need to modify our norm to be (with $N>10$ to be chosen) :
\beq\label{normintr}
\begin{aligned}
\|\beta\|_{X_{T}}&\define\sup_{t\in[0,T)} \ltr \|\beta^{L}\|_{W^{1,\infty}}+\ltr\|\beta^{m}\|_{H^{N-1}}+\ltr^2\|\beta^{h}\|_{H^{N-1}}\\
&\qquad\quad+\|xe^{itb(D)\beta^L}\|_{H^4}+\ltr^{-\delta}\|\beta\|_{H^N}+\|\beta\|_{H^{N-2}}
\end{aligned}
\eeq
 We now explain  Low$\times$Low$\rightarrow$ Low estimate 
where only dispersive estimate is available. To overcome the difficulty of quadratic nonlinearity, the normal form transformation (or more generally 'space-time resonance' philosophy \cite{germain2010space}) should be enforced. To be more precise, we set  $\alpha=e^{itb(D)}\beta^L$ and write 
\ben\label{eq1}
\izt e^{-i(t-s)b(D)}e^{\ep(t-s)\Delta}\chi^L(D)(\beta^L)^2\d s=\cF^{-1}(e^{-itb(\xi)}\izt e^{is\phi_{1}(\xi,\eta)}e^{\ep(t-s)|\xi|^2}\chi^{L}(\xi)
\hat{\alpha}(\xi-\eta)\hat{\alpha}(\eta)\d\eta\d s)
\een
where $\phi_1=b(\xi)-b(\xi-\eta)-b(\eta)<0$ 
 on the support of $\chi^L(\xi)\chi^L(\xi-\eta)\chi^L(\eta)$. Following the 'space-time resonance' philosophy, by identity $e^{is\phi_1}=\f{1}{i\phi_1}\p_{s}e^{is\phi_1}$, one integrate by parts in time so that \eqref{eq1} becomes 
 \ben\label{eq00}
 -\int_{0}^{t}e^{-i(t-s)b(D)}e^{\ep(t-s)\Delta}\chi^{L}(D)\big(\ep\Delta T_{\f{1}{i\phi_1}}(\beta^{L},\beta^{L})+T_{\f{1}{i\phi_1}}(\ep\Delta{\beta^{L}}+(\beta^2)^{L},\beta^{L})\big)\d s 
 \een
 plus boundary terms and symmetric terms which can be handled similarly. Here, $T_{\f{1}{i\phi_1}}$ is the bilinear operator defined by \eqref{defbilinear}. Note that we have also used the equation satisfied by $\alpha$: $$\pt\alpha=\epd \alpha+e^{itb(D)}(\beta^2)^L.$$
 
In view of \eqref{eq00}, besides the viscous terms, we need to estimate the typical term:
$$\izt e^{\lambda_{-}(D)(t-s)}\chi^{L}(D)(\beta^{L}\beta^L)^{L}\beta^{L}(s)\d s.$$  Nevertheless, the same problem like \eqref{eq0} emerges, since we could estimate $\|(\beta^{L}\beta^{L})^{L}\beta^{L}\|_{W^{2,1}}$ by $\|\beta^{L}\|_{H^2}^{2}\|\beta^L\|_{L^{\infty}}$ which has only the decay $(1+s)^{-1}$. Following 
\cite{MR3024265},\cite{MR3274788}, the 'vector field-like' norm $e^{-itb(D)}xe^{itb(D)}\beta^L$ needs to be involved to detect some space resonance information of the phase function.

We now explain the extra difficulty due to the dissipation term $\epd \beta$. By noticing that  $e^{\ep (t-s)\Delta}\chi^{L}\epd$ is a multiplier in $L_x^2$ with norm $(1+t-s)^{-1}$,
we expect that $\epd \beta^{L}$ has $L^2$ decay like $(1+t)^{-1}$.  
However, we shall still encounter the difficulty that $\|(\beta^L)^2\|_{L^2}$ have decay like $(1+t)^{-1},$ which forces us to use normal form transformation (or integrate by parts in time) again. This will increase the complexity of computations. The trick to simplify the arguments is that in the process of performing normal form transformations, we could introduce $\tilde{\alpha}=e^{-\ep t\Delta }e^{itb(D)}\beta^L$ as the intermediate profile.
By defining the complex phase function $\phi=i\phi_1+\ep(|\xi|^2-|\xi-\eta|^2-|\eta|^2)$ which does not vanish on the support of $\chi^L(\xi)\chi^L(\xi-\eta)\chi^L(\eta)$, we could integrate by parts in time as before to get:
\ben\label{eq2}
&&\izt e^{-i(t-s)b(D)}e^{\ep(t-s)\Delta}\chi^L(D)(\beta^L)^2\d s=\cF^{-1}(e^{-itb(\xi)}\izt e^{is\phi(\xi,\eta)}\chi^{L}(\xi)
\hat{\alpha}(\xi-\eta)\hat{\alpha}(\eta)\d\eta\d s)
\nonumber\\
&=&boundary \quad terms +\int_{0}^{t}e^{-i(t-s)b(D)}e^{\ep(t-s)\Delta}T_{\f{1}{\phi}}((\beta^2)^{L},\beta^{L})\d s+ symmetric \quad term.
\een
which allows us not to care about $\epd \beta^L$.
Note that there is no singularity on $\f{1}{\phi}$ since $i\phi_1$ never vanishes. Moreover, computations (see Section 3) show that the bilinear operator $T_{\f{1}{\phi}}$ enjoys the same good quasi-product estimates as $T_{\f{1}{\phi_1}}$. The strategy for dealing with this term shall then have  similarities with \cite{MR3024265}, \cite{MR3274788} where the global existence for 2-d (EP) is proved.

\textbf{ Organization of the paper}:
We first introduce some notations in Section 2. To prove Theorem \ref{thmlow},  some reformulations  and useful lemmas
(linear estimates, bilinear estimates) are presented in Section 3. The local existence in weighted space for system \eqref{NSPlow} shall be shown in Section 4. 
 Section 5 to Section 8 are dedicated to establish several a priori estimates. The conclusion for Theorem \ref{thmlow} are then made in Section 9. Theorem \ref{thmper} shall be proved in Section 10. Finally, in appendix, we sketch the proofs for part of low frequency estimates.

\section{Notations}
 $\bullet$ We denote $a_{+}(resp.a_{-})$ for a constant larger (resp.smaller) but arbitrarily closed to $a.$

$\bullet$ We choose three radial smooth functions $\chi_1,\chi_2,\chi_3:\mathbb{R}^{2}\rightarrow \mathbb{R}$ st.$\chi_1+\chi_2+\chi_3=1$ and
$\Supp\chi_1(\xi) \subset \{\xi\big||\xi|\leq 1\}$, $\Supp\chi_2(\xi)\subset\{\xi\big|\f{1}{2}\leq|\xi|\leq 3\}$, $\Supp\chi_3(\xi)\subset\{\xi\big||\xi|\geq \f{5}{2}\}$. Denote $\chi^{l}=\chi_1(\sqrt{\f{\ep}{\kpz}}\xi)$, $\chi^{m}=\chi_2(\sqrt{\f{\ep}{\kpz}}\xi)$,
$\chi^{h}=\chi_3(\sqrt{\f{\ep}{\kpz}}\xi)$,
$\chi^{L}=\chi^{l}+\chi^{m}$,
$\chi^{H}=\chi^{m}+\chi^{h}.$
We also write:
$f^{L}=\cF^{-1}(\chi^{L}(\xi)\cF{f}(\xi))$,
$f^{l}=\cF^{-1}(\chi^{l}(\xi)\cF{f}(\xi))$,
$f^{m}=\cF^{-1}(\chi^{m}(\xi)\cF{f}(\xi))$,
$f^{H}=\cF^{-1}(\chi^{H}(\xi)\cF{f}(\xi))$.

 $\bullet$ We define the bilinear operator $T_m(f,g)$ and trilinear operator $T_{\tilde{m}}(f,g,h)$
 \ben
 T_m(f,g)&\define&\cF^{-1}(\int m(\xi
 ,\eta)\hat{f}(\xi-\eta)\hat{g}(\eta)\d \eta)\label{eqbilinear}\label{defbilinear}\\
  T_{\tilde{m}}(f,g,h)&\define&\cF^{-1}(\int m(\xi
 ,\eta,\si)\hat{f}(\xi-\eta)\hat{g}(\eta-\si)\hat{h}(\eta)\d \eta)\label{trilinear}
 \een
 
 $\bullet$ We recall the classical  Littlewood-Paley decomposition: choose a cut-off function $ \Psi, 0\leq \Psi \leq 1, \Psi\equiv 1$ on $B_{3/2
 }$ and vanish on $B_{5/3}
 ^{c}$. We set 
 \beq\label{jthdyadic}
 \Phi_j(x)=\Phi(\f{x}{2^{j}}),\quad \text{where} \quad \Phi(x)=\Psi(x)- \Psi(2x).
 \eeq
Note that $\Phi(x)$ supported on the annulus $\{\f{3}{4}\leq|x|\leq \f{5}{3}\}$ and  $1=\Psi(x)+\sum_{j\in\mathbb{N}^{*}}\Phi_{j}(x)$.
 Recall the homogeneous dyadic block:
 $\hdb_{k}f\define \cF^{-1}(\Phi_{k}(\xi)\hat{f}(\xi))$ ($k\in \mathbb{Z}$), inhomogeneous dyadic block:
 $\Delta_{-1}f\define \cF^{-1}(\Psi(\xi)\hat{f}(\xi)), \Delta_{l}f\define \cF^{-1}(\Phi_{k}(\xi)\hat{f}(\xi)),(l\in \mathbb{N})$, and $S_{k}=\sum_{-1\leq j\leq k-1}\Delta_j$.

\section{Preliminaries}
Set $\vr=\rho-1$, system \eqref{NSPO} is equivalent to the following system:
 \beq \label{ANSP1}
 \left\{
\begin{array}{l}
\displaystyle \pt \vr +\div u+\div( \vr u)=0,\\
\displaystyle  \pt u+u \cdot {\na u}-\varepsilon  \mathcal{L} u+
\nabla \vr-\nabla \varphi=0 ,  \\
\displaystyle \Delta \varphi =\vr\\
\displaystyle u|_{t=0} =\mathcal{P}^{\perp}u_0^{\ep}, 
\vr|_{t=0}=\rho_0^{\varepsilon}-1
\end{array}
\right.
\eeq
We first remark that since $\curl (\mathcal{P}^{\perp}u_0^{\ep})=0$, standard energy estimates indicate that this curl-free property will propagate as long as smooth solution exists. Note also that by identity
$\Delta u=-\curl\curl u+\na\div u$, we have $\mathcal{L}u=\Delta u+\na\div u=2\Delta u$.\\
To symmetrize the system, we first introduce the new unkowns:
 \beqs
 \mathrm{a}=\f{\lnr}{|\na|} \vr,\quad \mathrm{c}=\f{\div}{|\na|} u ,\qquad U=(\mathrm{a},\mathrm{c})^{\top}
 \eeqs
 It is direct to see that $(\mathrm{a},\mathrm{c})$ satisfies the system:
  \beq \label{eqsym}
 \left\{
\begin{array}{l}
\displaystyle \pt \mathrm{a} + \lnr \mathrm{c}=\lnr\f{\div} {|\na|}\bigg(\big(\f{|\na|}{\lnr}\mathrm{a}
\big) \R \mathrm{c}\bigg)=\lnr \R\cdot\bigg(\big(\f{|\na|}{\lnr}\mathrm{a}\big) \R \mathrm{c}\bigg) \\
\displaystyle \pt \mathrm{c}-\lnr \mathrm{a}-2\vep \Delta \mathrm{c}=\f{1}{2}\f {\div}{|\na|} \na |\R \mathrm{c}|^2=-\f{1}{2}
|\na||\R \mathrm{c}|^2\\
\displaystyle \mathrm{a}|_{t=0}=\f{\lnr} {|\na|}\vr_0,\mathrm{c}|_{t=0}= \f {\div}{|\na|}u_0 \\
\end{array}
\right.
 \eeq
ie.
\begin{equation}\label{eqU}
\pt U+\left(
  \begin{array}{cc}
    0&\lnr\\
    -\lnr&-2\ep \Delta\\
  \end{array}
\right)
U=\left(
  \begin{array}{c}
    \lnr \R\cdot\big((\f{|\na|}{\lnr}\mathrm{a})\cdot \R \mathrm{c}\big) \\
-\f{1}{2}
  |\na| |\R \mathrm{c}|^2\\
  \end{array}
\right)\define\left(
  \begin{array}{c}
    F_1(\mathrm{a},\mathrm{c})\\
F_2(\mathrm{a},\mathrm{c})\\
  \end{array}
\right)= F(\mathrm{a},\mathrm{c})
\end{equation}
where we denote $\R=\f{\na}{|\na|}$ the Riesz potential. 
Note also that we have used the fact that $u=\R \mathrm{c}$ which is a consequence of $\curl u=0$. \\
Define
\ben\label{def of A}
A=\left(
  \begin{array}{cc}
    0&\lnr\\
    -\lnr&-2\vep \Delta\\
  \end{array}
\right)
\een
   By elementary computation, we get that the eigenvalues of $-\hat{A}(\xi)$ are:
   \beq \label{eqA}
   \lambda _{\pm}=-\vep |\xi|^2\pm i \sqrt{1+|\xi|^2-\vep^2 |\xi|^4}\define -\vep |\xi|^2\pm ib(\xi)
   \eeq
   where we cut the lower half imaginary axis in the definition of square root of a complex function.
 What is more, one can easily check that the Green matrix is
   \ben\label{def of green function}
   e^{-t\hat{A}(\xi)}&=&\frac{1}{\lambda_{+}-\lambda_{-}}
   \left(
  \begin{array}{cc}
    \lp e^{\lm t}-\lm e^{\lp t}&(e^{\lm t}-e^{\lp t})\lxr\\
    (e^{\lp t}-e^{\lm t})\lxr&\lp e^{\lp t}-\lm e^{\lm t}\\
  \end{array}
\right)
   \define
    \left(
  \begin{array}{cc}
   \mathcal{G}_1&-\mathcal{G}_2\\
   \mathcal{G}_2&\mathcal{G}_3\\
  \end{array}
\right)
   \een
  Note $\mathcal{G}_1,\mathcal{G}_2,\mathcal{G}_3$ are well defined everywhere since there is no singularity when $\lambda_{+}=\lambda_{-}$.
 When we focus on the Low frequency, ie, when $\vep|\xi|^2\leq 3\kpz<<1$, we can diagonalize  $A$ as:
   \ben\label{Qdef}
A(D)&=&\left(
  \begin{array}{cc}
    1&1\\
    -\f{\lm(D)}{\lnr}&-\f{\lp(D)}{\lnr}\\
  \end{array}
\right)
\left(
  \begin{array}{cc}
    -\lm & 0\\
   0 & -\lp \\
  \end{array}
\right)
\left(
  \begin{array}{cc}
    \lp&\lnr\\
    -\lm &-\lnr\\
  \end{array}
\right)\f{1}{2ib}\nonumber\\
&\define&Q\left(
  \begin{array}{cc}
    -\lm & 0\\
   0 & -\lp \\
  \end{array}
\right)Q^{-1},\quad Q^{-1}=\left(
  \begin{array}{cc}
    \lp&\lnr\\
    -\lm &-\lnr\\
  \end{array}
\right)\f{1}{2ib}.
\een

 We denote then $W=Q^{-1}\chi^{L}U\define(w,\bar{w})$ for which the first component satisfies the equation:
 \ben\label{eq of w}
 \pt{w}-\lambda_{-}(D)w&=&\f{\lambda_{+}}{2ib}\chi^L(D)F_1(\mathrm{a},\mathrm{c}) 
 +\f{\lnr}{2ib}\chi^L(D)F_2(\mathrm{a},\mathrm{c})
 \nonumber\\
 &=&\f{\lambda_{+}}{2ib}\chi^L(D)F_1(\mathrm{a}^L,\mathrm{c}^L)_+\f{\lnr}{2ib}\chi^L(D)F_2(\mathrm{a}^L,\mathrm{c}^L)\nonumber \nonumber\\
 &&\qquad\qquad+[Q^{-1}\chi^L(D)(F(\mathrm{a}^h,\mathrm{c}^L)+F(\mathrm{a}^L,\mathrm{c}^h)+F(\mathrm{a}^h,\mathrm{c}^h))]_{1} \nonumber\\
 &\define&\R\big(B(w,w)+\lnr\chi^{L}H\big).
 \een
where, $H=\R \mathrm{a}^L\R \mathrm{c}^h+\R \mathrm{c}^L\R \mathrm{a}^h+\R \mathrm{a}^h+\R \mathrm{c}^h\approx \R U^L\R U^h+\R U^h \R U^h$ and by relation $\mathrm{a}^L=w+\bar{w}, \mathrm{c}^L=-(\f{\lambda_{-}}{\lnr}w+\f{\lambda_{+}}{\lnr}\bar{w})$,
$B(w,w)$ is defined by
\beq\label{def of B}
\cF{B(w,w)}=\sum_{\mu,\nu\in\{+,-\}}\int m_{\mu\nu}(\xi,\eta)\widehat{\R w^{\mu}}(\xi-\eta)\widehat{\R w^{\nu}}(\eta)\d \eta
\eeq
with $m_{\mu,\nu}(\xi,\eta)=\lxr n_{\mu}(\xi-\eta)n_{\nu}(\eta)\chi^{L}(\xi)\chi^{L}(\xi-\eta)\chi^{L}(\eta),$
$n_{+}\in\{-\f{\lambda_{-}}{\lnr},1\},n_{-}\in\{-\f{\lambda_{+}}{\lnr},1\}$ and exponent
$\{\pm\}=\{Id, conjugate\}$.

Note that in the above (and hereafter), for notational convenience, we do not make difference between ‘real’ Riesz potential and general 
zero order Fourier multipliers 
whose symbol satisfies zero homogeneous condition and is smooth away from the origin, since they have similar properties. For example, they are both bounded operators in $L^p (1<p<\infty).$
Moreover, we do not distinguish the scalar Riesz potential $\f{\na_j}{|\na|}(j=1,2)$ and vector one $\f{\na}{|\na|}.$ 
One easily checks that in the above,
$\R$ can represent anyone of the set $\{\f{\div}{|\na|}\f{\lambda_{+}}{2ib(D)}\chi^{L}(D),\f{|\na|}{4ib}\chi^L(D),\f{|\na|}{\lnr},\f{\na}{|\na|}\}$.

After recalling the definition:
$U^{L}=\chi^{L}(D)U=\chi^{l}(D)U+\chi^{m}(D)U=U^{l}+U^{m}, U^{m}=\chi^{m}(D)U, U^{h}=\chi^{h}(D)U$,
we define the following norm:
 \beq\label{def of real norm}
 \begin{aligned}
 &\|U\|_{X^{\si}_T}
 \define\sup_{t\in[0,T)}\ltr\||\na|^{\f{1}{2}}\lnr Q^{-1}U^L(t)\|_{W^{\si,\infty}}
 +\|xe^{itb(D)}w(t)\|_{W^{\si+4,\f{2}{1-\delta}}}+\|U^{L}(t)\|_{H^{\si+N'}}\\
 &
\quad +\ltr^{1-3\delta}\|U^{m}(t)\|_{H^{2\si+N-1}}+\ltr^{\f{3}{2}}\|U^m(t)\|_{W^{1,4}}+\ltr^{\alpha}\|U^{h}(t)\|_{H^{2\si+N-2}}+\ltr^{-\delta}\|U(t)\|_{H^{2\si+N}},
\end{aligned}
 \eeq
where $N'=7, N=N'+4$, $\alpha=2-5\delta$ and $\delta$ is chosen to be very small (say $\delta=\f{1}{1000}$). 
We comment that the norm defined above is slightly different from \eqref{normintr} mainly due to the presence of Riesz potential in the nonlinear term (see \eqref{eq of w}). 
We shall prove the global existence of system \eqref{eqU} in the Banach space $X^{\si}_T$ defined by the norm $\|\cdot\|_{X_T^{\si}}$. In the sequel, for notational
clarity, we shall assume that $\si=0$ (and denote $X_T^{0}=X_T$), since the case $\si>0$ can be easily generalized.
We first remark that by dispersive estimate \eqref{dispersive} and H\"{o}lder's inequality, for any $0\leq t<T$
\beno
\|U^L(t)\|_{W^{2,\f{1}{\delta}}}\lesim 
\|w(t)\|_{W^{2,\f{1}{\delta}}}\lesim \ltr^{-(1-2\delta)} \|e^{itb(D)}w(t)\|_{W^{4(1-\delta),\f{1}{1-\delta}}}\lesim \ltr^{-(1-2\delta)}\|U\|_{X_T}.
\eeno
Moreover, we have:
\beno
\|\na u(t)\|_{L^{\infty}}=\|\na \R U(t)\|_{L^{\infty}}&\leq&\|\na \R U^{L}(t)\|_{L^{\infty}}+\|\na \R U^{h}(t)\|_{L^{\infty}}\\
&\lesssim&\sum_{k}2^{\f{1}{2}k}\langle2^{k}\rangle^{-1}\|\hdb_{k}|\na|^{\f{1}{2}}\lnr w(t)\|_{L^{\infty}}+\ltr^{-\alpha}\|U\|_{X_T}\\
&\lesssim&\||\na|^{\f{1}{2}}\lnr w(t)\|_{L^{\infty}}+\ltr^{-\alpha}\|U\|_{X}
\lesssim\ltr^{-1}\|U\|_{X_T}.
\eeno


In the following of this section, we will give some preliminary lemmas which will be used later.

\subsection{Linear estimates}
We present in this subsection the linear estimates for Low (lowest and intermediate) and highest frequency.

\subsubsection{Linear estimates for Low frequency}
\begin{lem}{Dispersive estimate for $e^{itb(D)}\chi^L$.}\label{lem dispersive}\\
For every $\kpz$ is small enough (say $ \kpz\leq \f{1}{200}$) and for any $2\leq p\leq\infty$, we have the following dispersive estimate: 
\beq\label{dispersive}
\|e^{itb(D)}\chi^{L}(D)f\|_{L^p}\lesssim_{\kpz}(1+|t|)^{-(1-\f{2}{p})}\|f\|_{W^{2(1-\f{2}{p}),p'}},\quad \forall t\in \mathbb{R}.
\eeq
\end{lem}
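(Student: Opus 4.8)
\textbf{Proof plan for Lemma \ref{lem dispersive}.}

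The plan is to reduce the estimate to the two endpoint cases $p=2$ and $p=\infty$ and then interpolate. The case $p=2$ is immediate: since $b(\xi)$ is real on the support of $\chi^L$ (because $\ep|\xi|^2\leq 3\kpz\ll 1$ forces $1+|\xi|^2-\ep^2|\xi|^4>0$), the operator $e^{itb(D)}$ is an $L^2$ isometry and $\|e^{itb(D)}\chi^L(D)f\|_{L^2}\lesssim\|f\|_{L^2}$, which is exactly \eqref{dispersive} with $p=2$. So the real content is the $L^\infty$ bound
$$\|e^{itb(D)}\chi^L(D)f\|_{L^\infty}\lesssim (1+|t|)^{-1}\|f\|_{W^{2,1}},$$
and the intermediate $p$ then follows from Riesz--Thorin (or Stein interpolation) applied to the analytic family $e^{itb(D)}\chi^L(D)\lnr^{-2(1-2/p)}$-type operators, tracking the constant's dependence on $\kpz$.

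For the $L^\infty\to L^1$-dual estimate, I would write the kernel
$$K_t(x)=\int_{\mathbb{R}^2}e^{ix\cdot\xi}e^{itb(\xi)}\chi^L(\xi)\,\d\xi$$
and show $\|K_t\|_{L^\infty_x}\lesssim (1+|t|)^{-1}$, since then convolving against $\lnr^{-2}$ (which is an $L^1$ kernel in $2$d, as $\lnr^{-2}$ decays like $|x|^{-2}$ times a logarithm... more precisely $\lnr^{-2}$ has an integrable kernel) upgrades $\|f\|_{L^1}$ to $\|f\|_{W^{2,1}}$ on the right. For $|t|\leq 1$ the bound is trivial since $\chi^L$ is compactly supported (on a ball of radius $\sim\sqrt{\kpz/\ep}$, but here one must be careful — the support grows as $\ep\to0$, so one should instead note $\chi^L$ is bounded by $1$ and use that $\lnr^{-2}\chi^L$ is uniformly in $L^1_\xi$). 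For $|t|\geq 1$ the key is a stationary phase analysis of $b(\xi)=\sqrt{1+|\xi|^2-\ep^2|\xi|^4}$: writing $b(\xi)=\langle\xi\rangle\sqrt{1-\ep^2|\xi|^4/\langle\xi\rangle^2}$, on the region $\ep|\xi|^2\leq 3\kpz$ one has $b$ close to $\langle\xi\rangle$ in $C^k$ for $\kpz$ small, uniformly in $\ep$; in particular the Hessian of $b$ is a small perturbation of that of $\langle\xi\rangle$, which is nondegenerate (indeed $\langle\xi\rangle$ has Hessian with eigenvalues $\langle\xi\rangle^{-1}$ and $\langle\xi\rangle^{-3}$, both positive), so $\det D^2 b$ stays bounded away from zero on the relevant frequency support with constants depending only on $\kpz$. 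Then the standard $2$d stationary phase / oscillatory integral bound gives $\|K_t\|_\infty\lesssim |t|^{-1}$.

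The main obstacle I anticipate is the uniformity in $\ep$: the frequency cutoff $\chi^L(\xi)=\chi(\sqrt{\ep/\kpz}\,\xi)$ has support growing like $\{|\xi|\lesssim\sqrt{\kpz/\ep}\}$, so one cannot simply say "the symbol is supported on a fixed compact set." One must check that all the relevant quantities — the lower bound on $|\det D^2 b|$, the upper bounds on derivatives of $b$ and of $\chi^L$ up to the order needed for the non-stationary phase (integration by parts) part of the argument away from critical points — are controlled \emph{uniformly in $\ep\in(0,1]$}, using only that $\ep|\xi|^2\leq 3\kpz$ with $\kpz$ small. The derivatives of $\chi^L$ bring factors $(\ep/\kpz)^{k/2}$, which on the support $|\xi|\sim\sqrt{\kpz/\ep}$ are comparable to $\langle\xi\rangle^{-k}$ up to $\kpz$-dependent constants, so they are harmless, but this needs to be spelled out. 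Once these uniform bounds are in hand, the oscillatory integral estimate is routine and the interpolation step is standard; I would present the $\ep$-uniform stationary phase estimate as the heart of the proof and relegate the non-stationary region and the interpolation to brief remarks.
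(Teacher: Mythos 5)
Your overall strategy (interpolate between the trivial $L^2$ bound and an $L^\infty$ stationary--phase estimate, treating $b$ as an $\ep$-uniform perturbation of $\lnr$ on the support of $\chi^L$) is the same one the paper relies on via Corollary 3.7 of \cite{rousset2019stability}, but two of your key claims are false as stated, and they hide exactly the point of the lemma. First, you assert that $\det D^2b$ ``stays bounded away from zero on the relevant frequency support with constants depending only on $\kpz$.'' The support of $\chi^L$ is $\{|\xi|\lesssim\sqrt{\kpz/\ep}\}$, which is \emph{not} uniformly compact as $\ep\to0$, and on it $D^2b$ is close to $D^2\lnr$, whose eigenvalues are $\sim\langle\xi\rangle^{-1}$ and $\sim\langle\xi\rangle^{-3}$, so $\det D^2b\sim\langle\xi\rangle^{-4}\to0$ along the support. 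Second, and as a consequence, your intended unweighted kernel bound $\|K_t\|_{L^\infty}\lesssim(1+|t|)^{-1}$ with $K_t=\cF^{-1}(e^{itb}\chi^L)$ cannot hold uniformly in $\ep$: it would give $\|e^{itb(D)}\chi^Lf\|_{L^\infty}\lesssim(1+|t|)^{-1}\|f\|_{L^1}$ with no derivative loss, which already fails for the Klein--Gordon propagator $e^{it\lnr}$ at high frequencies (a direct check of $K_t(0)$ via integration by parts produces factors growing like negative powers of $\ep$ from the area of the support). The loss of two derivatives in $W^{2,1}$ at $p=\infty$ is precisely what compensates the factor $|\det D^2b|^{-1/2}\sim\langle\xi\rangle^{2}$ produced by stationary phase, so you cannot first prove a weight-free kernel bound and then ``upgrade'' by convolving with the Bessel kernel of $\lnr^{-2}$; the weight (or, equivalently, a Littlewood--Paley decomposition with the frequency-dependent bound $\|e^{itb(D)}\chi^L\hdb_k f\|_{L^\infty}\lesssim(1+|t|)^{-1}2^{2k}\|\hdb_kf\|_{L^1}$) must enter the oscillatory-integral estimate from the start.

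A correct write-up therefore needs: (i) the dyadic (or weighted) stationary-phase estimate above, with the radial/angular eigenvalue structure of $D^2b$ tracked uniformly in $\ep$ on $\{\ep|\xi|^2\leq3\kpz\}$ (here your observation that derivatives of $\chi^L$ cost $(\ep/\kpz)^{1/2}\lesssim\langle\xi\rangle^{-1}$ on its support is indeed the right remark), and (ii) an argument for summing the dyadic pieces against the $\lnr^{2}$ weight, since the number of blocks is $\sim\log(\kpz/\ep)$ and a naive summation of $2^{2k}\cdot2^{-2k}$ is borderline; this is the kind of detail handled in the cited Corollary 3.7, and your sketch does not engage with it. Your $p=2$ endpoint and the interpolation step are fine.
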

\begin{proof}
Indeed, \eqref{dispersive} holds when 
$e^{itb(D)}\chi^L(D)$ is replaced by $e^{it\lnr},$ which follows from the classical stationary phase arguments. Nevertheless, when $\kpz$ is chosen small enough, $e^{itb(D)}\chi^L(D)$
enjoys the similar algebraic properties as $e^{it\lnr}.$
 One can refer to Corollary 3.7 of \cite{rousset2019stability} for the detailed proof.
\end{proof}
\begin{lem}{$L^p\rightarrow L^p $ boundedness for $e^{itb(D)}\chi^L$.}
\label{Lp bounds for eitb(D)}

Suppose $\kpz\leq\f{1}{200}$.
For any $1<p<\infty$,  we have the following estimate:
\beq\label{Lp bounds}
\|\Delta_{k}e^{itb(D)}\chi^{L}(D)u\|_{L^p}\lesssim_{\kpz}\ltr^{|1-\f{2}{p}|}\langle 2^{k}\rangle^{|1-\f{2}{p}|}\|\Delta_{k}u\|_{L^p},  (k\geq -1)
\eeq
\beq\label{Lp bounds1}
\|e^{itb(D)}\chi^{L}(D)u\|_{L^p}\lesssim_{\kpz}\ltr^{|1-\f{2}{p}|}\|u\|_{W^{s,p}}.
\eeq
where $s>|1-\f{2}{p}|$.
\end{lem}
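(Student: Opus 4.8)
The plan is to reduce both estimates to a Mikhlin–Hörmander multiplier argument combined with the dispersive estimate of Lemma \ref{lem dispersive}. For \eqref{Lp bounds}, the key observation is that on the support of $\chi^L$ we have $\ep|\xi|^2\lesssim \kpz\ll1$, so $b(\xi)=\sqrt{1+|\xi|^2-\ep^2|\xi|^4}$ is a smooth, nonvanishing symbol comparable to $\lnr$ there, and $e^{itb(\xi)}\chi^L(\xi)$ behaves algebraically like $e^{it\lnr}$. Restricting further to a dyadic shell $|\xi|\sim 2^k$ via $\Delta_k$, I would show that the rescaled symbol $\xi\mapsto e^{itb(2^k\xi)}\chi^L(2^k\xi)\tilde\Phi(\xi)$ (with $\tilde\Phi$ a fattened version of $\Phi$) satisfies Mikhlin-type bounds with constant growing polynomially in $\langle t\rangle\langle 2^k\rangle$: each $\xi$-derivative hitting the exponential produces a factor $t\,\partial b$, and since $|\partial^\gamma b(\xi)|\lesssim \langle\xi\rangle^{1-|\gamma|}$ on the relevant range one gets, after rescaling, a gain of $\langle 2^k\rangle$ per derivative that multiplies $t$. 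Hence the Fourier multiplier norm on $L^p(\mathbb{R}^2)$ of $\Delta_k e^{itb(D)}\chi^L(D)$ is $\lesssim_{\kpz} (1+|t|\langle 2^k\rangle)^{\theta}$ for a suitable $\theta$; interpolating the trivial $L^2$ bound ($\theta=0$, since the symbol is bounded uniformly) with the endpoint growth one extracts the exponent $|1-\tfrac2p|$. The clean way to get exactly the exponent $|1-\tfrac2p|$ rather than something larger is to interpolate the $L^2\to L^2$ bound (cost $1$) with the $L^1\to L^\infty$ (equivalently $L^\infty\to L^1$ by duality) dispersive bound from Lemma \ref{lem dispersive}, which loses exactly $(1+|t|)$ on a fixed shell; Stein interpolation between these two gives the $L^p$ statement with the right power, and the localization $\Delta_k$ (or rescaling) supplies the $\langle 2^k\rangle^{|1-2/p|}$ factor.

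For \eqref{Lp bounds1} I would sum the dyadic pieces. Split $u=\Delta_{-1}u+\sum_{k\ge0}\Delta_k u$ and apply \eqref{Lp bounds} to each block, so that
\[
\|e^{itb(D)}\chi^L(D)u\|_{L^p}\lesssim_{\kpz}\langle t\rangle^{|1-\frac2p|}\Big(\|\Delta_{-1}u\|_{L^p}+\sum_{k\ge0}\langle 2^k\rangle^{|1-\frac2p|}\|\Delta_k u\|_{L^p}\Big).
\]
The series is controlled by $\|u\|_{W^{s,p}}$ for any $s>|1-\tfrac2p|$: indeed $\sum_k \langle 2^k\rangle^{|1-2/p|}\|\Delta_k u\|_{L^p}\le\big(\sum_k \langle 2^k\rangle^{2(|1-2/p|-s)}\big)^{1/2}\big(\sum_k\langle 2^k\rangle^{2s}\|\Delta_k u\|_{L^p}^2\big)^{1/2}$, where the first factor is a convergent geometric-type sum because $|1-\tfrac2p|-s<0$, and the second is comparable to $\|u\|_{W^{s,p}}$ via the Littlewood–Paley square-function characterization of Bessel-potential spaces for $1<p<\infty$. (One may alternatively use the embedding $W^{s,p}\hookrightarrow B^{|1-2/p|}_{p,1}$ directly.) Note also that the factor $\chi^L$ is harmless: it is a fixed Mikhlin multiplier (for $\kpz$ fixed), so composing with it only changes the implied constant.

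The main obstacle — though a mild one — is tracking how the derivative bounds on $b$ interact with the dyadic rescaling to produce exactly $\langle 2^k\rangle$ per derivative on every shell, including the low shell $k=-1$ where $b\sim1$ and no gain is needed, and making sure the Mikhlin constants are uniform in $k$ after rescaling. This is precisely the content already established in \cite{rousset2019stability} (the algebraic properties of $e^{itb(D)}\chi^L$ mimicking $e^{it\lnr}$), so I would invoke that as in the proof of Lemma \ref{lem dispersive} and only indicate the interpolation step that converts the dispersive bound \eqref{dispersive} into the stated $L^p\to L^p$ growth. The case $p=2$ is trivial (the symbol is uniformly bounded), which anchors the interpolation.
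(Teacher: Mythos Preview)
Your reduction of \eqref{Lp bounds1} to \eqref{Lp bounds} by summation is fine (and matches the paper); the alternative Besov embedding $W^{s,p}\hookrightarrow B^{|1-2/p|}_{p,1}$ is the right way to phrase it, since your Cauchy--Schwarz argument lands in $B^s_{p,2}$, which is not controlled by $W^{s,p}$ when $p>2$.

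The gap is in \eqref{Lp bounds} itself. The Mikhlin--H\"ormander route, as you note, overshoots: in dimension two it requires two symbol derivatives, and each derivative of $e^{itb(2^k\cdot)}$ costs a factor $\sim t\langle 2^k\rangle$, so the multiplier constant is $(\langle t\rangle\langle 2^k\rangle)^2$, not $\langle t\rangle\langle 2^k\rangle$. Your proposed fix --- interpolating $L^2\to L^2$ with the dispersive bound of Lemma~\ref{lem dispersive} --- does not produce an $L^p\to L^p$ estimate: Riesz--Thorin (or Stein) interpolation between $L^2\to L^2$ and $L^1\to L^\infty$ yields $L^{p'}\to L^p$ bounds, which is exactly \eqref{dispersive} again, not what you want. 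Moreover the dispersive bound carries \emph{decay} $(1+|t|)^{-1}$, not growth, so there is no way to extract the factor $\langle t\rangle^{|1-2/p|}$ from it.

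What the paper does instead is bound the $L^1$ norm of the convolution kernel $K_k=\cF^{-1}(\Phi_k e^{itb}\chi^L)$ directly: by Young, this controls both the $L^1\to L^1$ and $L^\infty\to L^\infty$ operator norms. The kernel bound comes from the elementary weighted inequality $\|f\|_{L^1}\lesssim\|f\|_{L^2}^{1/2}\|x^2 f\|_{L^2}^{1/2}$ together with
\[
\|\Phi_k e^{itb}\chi^L\|_{L^2}\lesssim 2^k,\qquad \|\partial_\xi^2(\Phi_k e^{itb}\chi^L)\|_{L^2}\lesssim 2^{-k}\langle 2^k t\rangle^2\quad(k\geq 0),
\]
giving $\|K_k\|_{L^1}\lesssim\langle 2^k t\rangle\leq\langle t\rangle\langle 2^k\rangle$. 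Then Riesz--Thorin between this $p=1$ (or $p=\infty$) endpoint and the trivial $p=2$ case yields exactly the exponent $|1-\tfrac{2}{p}|$.
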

\begin{proof}
This lemma has essentially been proved in Lemma 2.2 of \cite{MR3274788} where $e^{it\lnr}$ rather than $e^{itb(D)}\chi^L(D)$ is considered. We will 
sketch the proof of \eqref{Lp bounds} for $p=1,2,\infty$, the other case for \eqref{Lp bounds} and \eqref{Lp bounds1} follows from interpolation and summation respectively.
By Young's inequality, it suffices for us to show:
\beq\label{eq3}
\|\cF^{-1}\big(\Phi_{k}(\xi)e^{itb(D)}\chi^{L}(\xi)\big)\|_{L^1}\lesim_{\kpz}\ltr\langle 2^{k}\rangle,  k\geq 0; \qquad \|\cF^{-1}\big(\Psi(\xi)e^{itb(D)}\chi^{L}(\xi)\big)\|_{L^1}\lesim_{\kpz}\ltr
\eeq
where $\Phi_k,\Psi_k$ is defined in \eqref{jthdyadic}.
To prove \eqref{eq3}, one uses the inequality: $\|f\|_{L^1}\lesim \|f\|_{L^2}^{\f{1}{2}}\|x^2f\|_{L^2}^{\f{1}{2}}$ and
elementary estimate:
$$\|\Phi_{k}(\xi)e^{itb(D)}\chi^{L}(\xi)\|_{L^2}\lesssim 2^{k},\qquad \|\p_{\xi}^2\big(\Phi_{k}(\xi)e^{itb(D)}\chi^{L}(\xi)\big)\|_{L^2}\lesssim_{\kpz} 2^{-k}\langle 2^{k}t\rangle^2,$$ 
$$\|\Psi(\xi)e^{itb(D)}\chi^{L}(\xi)\|_{L^2}\lesssim 1,\qquad \|\p_{\xi}^2\big(\Phi_{k}(\xi)e^{itb(D)}\chi^{L}(\xi)\big)\|_{L^2}\lesssim_{\kpz} \langle t\rangle^2.$$ 
\end{proof}
\subsubsection{Linear estimate for high frequency}
\begin{lem}{
Linear estimate for $e^{-tA}\chi^{h}$.}\label{high frequency estiamte}

There exists a constant $c=c(\kpz)$, such that,
for any real number $s$, we have:
\beqs
\|e^{-tA}\chi^{h}U\|_{H^s}\lesssim_{\kpz} e^{-ct}\|U\|_{H^s}.
\eeqs
\end{lem}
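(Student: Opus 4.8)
The plan is to pass to the Fourier side and show that on the support of $\chi^h$ the Green matrix $e^{-t\hat A(\xi)}$ decays like $e^{-ct}$ with a rate uniform in both $\ep$ and $\xi$. Recall from \eqref{eqA} that the eigenvalues of $-\hat A(\xi)$ are $\lambda_\pm = -\ep|\xi|^2 \pm i\sqrt{1+|\xi|^2-\ep^2|\xi|^4}$ on the region where $1+|\xi|^2-\ep^2|\xi|^4\geq 0$, and on the support of $\chi^h$ we have $\ep|\xi|^2 \geq \tfrac52\kpz$. The first task is to establish the key spectral bound $\Re\lambda_\pm \leq -c(\kpz)$ uniformly on $\Supp\chi^h$, for some $c(\kpz)>0$ independent of $\ep\in(0,1]$ and of $\xi$. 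This splits into two subcases according to whether $1+|\xi|^2-\ep^2|\xi|^4$ is positive (oscillatory regime, $\Re\lambda_\pm = -\ep|\xi|^2 \leq -\tfrac52\kpz$, done) or negative (overdamped regime, where $\lambda_\pm$ are both real; here one checks that the larger root $\lambda_+ = -\ep|\xi|^2 + \sqrt{\ep^2|\xi|^4 - |\xi|^2 - 1}$ stays bounded away from $0$ — indeed $\lambda_+\lambda_- = \lnr^2 = 1+|\xi|^2 > 0$ and $\lambda_++\lambda_- = -2\ep|\xi|^2 < 0$, so both roots are negative, and $\lambda_+ = \tfrac{1+|\xi|^2}{\lambda_-} = -\tfrac{1+|\xi|^2}{\ep|\xi|^2 + \sqrt{\ep^2|\xi|^4-|\xi|^2-1}} \leq -\tfrac{1+|\xi|^2}{2\ep|\xi|^2} \leq -\tfrac{1}{2}$, using $\ep\le 1$). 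So in both subcases $\Re\lambda_\pm \leq -c(\kpz)$.

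Next I would control the entries of $e^{-t\hat A(\xi)}$ given by the explicit formula \eqref{def of green function}. Each entry is a linear combination of $e^{\lambda_+ t}$ and $e^{\lambda_- t}$ with coefficients built from $\lambda_\pm$ and $\lnr$ divided by $\lambda_+-\lambda_-$. When $\lambda_+\neq\lambda_-$ one writes, e.g., $\tfrac{e^{\lambda_- t}-e^{\lambda_+ t}}{\lambda_+-\lambda_-} = -t\int_0^1 e^{(\lambda_- + \tau(\lambda_+-\lambda_-))t}\,d\tau$, which is manifestly bounded by $|t|\,e^{-ct}$ since the exponent has real part $\le -c(\kpz)$ for every $\tau\in[0,1]$; the coincidence locus $\lambda_+=\lambda_-$ is then handled by this same integral representation, which is smooth there (as already noted after \eqref{def of green function}). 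The remaining coefficients $\lambda_\pm$, $\lnr$ grow only polynomially in $|\xi|$, so each entry of $e^{-t\hat A(\xi)}$ is bounded by $C(\kpz)\langle\xi\rangle^2\,\langle t\rangle\, e^{-ct} \lesssim_{\kpz} e^{-c't}$ after absorbing the polynomial factors into a slightly smaller exponential rate. Multiplying by $\langle\xi\rangle^s$ and using Plancherel then gives the $H^s\to H^s$ bound, since the symbol estimate is pointwise in $\xi$.

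The main obstacle is the bookkeeping in the overdamped high-frequency regime: one must make sure the estimate on $\lambda_+$ (the root closest to the imaginary axis, hence the slowest-decaying mode) is genuinely uniform in $\ep$ and does not degenerate as $\ep|\xi|^2\to\infty$ or as one crosses the transition surface $1+|\xi|^2 = \ep^2|\xi|^4$ where $\lambda_+=\lambda_-$. The identities $\lambda_+\lambda_- = 1+|\xi|^2$ and $\lambda_++\lambda_- = -2\ep|\xi|^2$ are what make this clean: they force $\lambda_+ \le -1/2$ in the real regime regardless of how large $\ep|\xi|^2$ is, and they match continuously onto $\Re\lambda_\pm = -\ep|\xi|^2 \le -\tfrac52\kpz$ in the oscillatory regime. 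Once the uniform spectral gap $\sup_{\xi\in\Supp\chi^h}\Re\lambda_\pm(\xi) \le -c(\kpz)$ is in hand, together with the integral representation of the Green matrix entries that is regular across $\lambda_+=\lambda_-$, the rest is a routine Plancherel argument.
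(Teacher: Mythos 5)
Your overall strategy is the same as the paper's (which defers the computation to Lemma 3.5 of \cite{rousset2019stability}): a pointwise bound on the symbol $e^{-t\hat A(\xi)}$ on $\Supp\chi^{h}$ followed by Plancherel. Your uniform spectral gap is correct and cleanly argued: in the oscillatory regime $\Re\lambda_{\pm}=-\ep|\xi|^{2}\lesssim-\kpz$ on $\Supp\chi^{h}$, and in the overdamped regime the identities $\lambda_{+}\lambda_{-}=1+|\xi|^{2}$, $\lambda_{+}+\lambda_{-}=-2\ep|\xi|^{2}$ force the slow root to satisfy $\lambda_{+}\leq-\tfrac12$ uniformly in $\ep$. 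The gap is in your last step: the claimed absorption $C(\kpz)\langle\xi\rangle^{2}\langle t\rangle e^{-ct}\lesssim_{\kpz}e^{-c't}$ is false uniformly in $\xi$, since $\langle\xi\rangle$ is unbounded on $\Supp\chi^{h}$ while $c$ is a fixed constant; only the factor $\langle t\rangle$ can be traded for a smaller rate. As written, your argument only yields $\|e^{-tA}\chi^{h}U\|_{H^{s}}\lesssim\langle t\rangle e^{-ct}\|U\|_{H^{s+2}}$, i.e.\ a two-derivative loss, which is not the statement of the lemma and would not suffice where it is used (in Section 6 the estimate is applied with the nonlinearity measured in exactly $H^{N-2}$).

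The missing point is that the polynomially growing coefficients $\lambda_{\pm}$, $\langle\xi\rangle$ must be beaten either by the denominator $\lambda_{+}-\lambda_{-}$ or by the $\xi$-dependent part of the exponential decay, never by the fixed rate $e^{-ct}$; this case analysis is the actual content of the "careful study of the Green matrix". Concretely, in the oscillatory regime $\ep|\xi|^{2}\leq\langle\xi\rangle$, hence $|\lambda_{\pm}|\leq 2\langle\xi\rangle$: if $b(\xi)\geq\tfrac12\langle\xi\rangle$ one divides by $|\lambda_{+}-\lambda_{-}|=2b\gtrsim\langle\xi\rangle$ and the coefficients cancel, leaving $e^{-\ep|\xi|^{2}t}\leq e^{-c\kpz t}$; if $b(\xi)<\tfrac12\langle\xi\rangle$ then $\langle\xi\rangle\lesssim\ep|\xi|^{2}$ and your integral representation gives a bound by $(1+\ep|\xi|^{2}t)e^{-\ep|\xi|^{2}t}\lesssim e^{-\frac12\ep|\xi|^{2}t}\leq e^{-c\kpz t}$. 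In the overdamped regime, near the crossing ($\ep|\xi|^{2}\leq 2\langle\xi\rangle$) one uses $|\lambda_{+}|\geq\frac{\langle\xi\rangle^{2}}{2\ep|\xi|^{2}}\gtrsim\langle\xi\rangle$, so that $\langle\xi\rangle t\,e^{\lambda_{+}t}\lesssim e^{\lambda_{+}t/2}\leq e^{-t/4}$, while away from it $|\lambda_{+}-\lambda_{-}|\gtrsim\ep|\xi|^{2}\gtrsim\langle\xi\rangle$ controls the coefficients directly and $e^{\lambda_{+}t}\leq e^{-t/2}$ remains. With this analysis the uniform symbol bound $|e^{-t\hat A(\xi)}|\lesssim_{\kpz}e^{-ct}$ does hold and your Plancherel step finishes the proof; without it, the write-up does not establish the lemma as stated.
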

\begin{proof}
One needs to study carefully the Green matrix \eqref{def of green function} localized on high frequency, since the algebraic computations does not depend on the dimension, one can refer to Lemma 3.5 of \cite{rousset2019stability} where the similar property is shown in dimension 3.
\end{proof}
\subsubsection{Additional estimate for intermediate frequency}

For the intermediate frequency, we could use the spectral localization to get the boundedness of $e^{-tA}\chi^m$ from $W^{|1-\f{2}{p}|_{+},p}$ to $L^p (1<p<\infty).$ 
\begin{lem}\label{cor immediate norm }
Recall
$\chi^{m}(\xi)=\chi_{2}(\sqrt{\f{\ep}{\kpz}}\xi)$ where $\chi_2$ is smooth function supported on $\{\xi\big|\f{1}{2}\leq|\xi|\leq 3\}$.
We have for any $1<p<\infty$,
\beqs
\|e^{-tA}\chi^{m}(D)u\|_{L^p}\lesssim_{\kpz}e^{-\f{\kpz}{5} t}\|u\|_{W^{|1-\f{2}{p}|_{+},p}}.
\eeqs
\end{lem}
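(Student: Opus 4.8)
The plan is to exploit that $e^{-tA}\chi^m(D)$ is a Fourier multiplier whose symbol is supported in the shell $\{\f{1}{2}R\le|\xi|\le 3R\}$ with $R:=\sqrt{\kpz/\ep}$, to bound its convolution kernel in $L^1$ (hence to control its $L^1\to L^1$ and $L^\infty\to L^\infty$ norms) together with the trivial $L^2\to L^2$ bound, and then to interpolate. The key observation is that on $\Supp\chi^m$ one has $\ep|\xi|^2\in[\f{\kpz}{4},9\kpz]$, hence $\Re\lambda_\pm(\xi)=-\ep|\xi|^2\le-\f{\kpz}{4}$, and (for $\kpz$ small) $b(\xi)^2=1+|\xi|^2-\ep^2|\xi|^4$ is comparable to $\lxr^2$ and bounded away from $0$ there. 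Differentiating \eqref{eqA} and using $\ep|\xi|\lesssim\sqrt{\kpz}$ and $|\nabla b|\lesssim|\xi|/b\lesssim 1$ on the shell, one gets $|\nabla_\xi^\gamma\lambda_\pm(\xi)|\lesssim_{\kpz}1$ for $1\le|\gamma|\le 2$; since $\lp-\lm=2ib$ never vanishes there, the entries $\mathcal{G}_1,\mathcal{G}_2,\mathcal{G}_3$ of \eqref{def of green function} are bounded combinations of $e^{\lambda_\pm t}$, so that
\beqs
|\nabla_\xi^\gamma e^{-t\hat{A}(\xi)}|\lesssim_{\kpz}(1+t^{|\gamma|})\,e^{-\f{\kpz}{4}t}\qquad\text{on}\ \Supp\chi^m,\ \ |\gamma|\le 2,
\eeqs
the polynomial factors in $t$ being harmless since they get absorbed into a sliver $e^{-\kpz t/20}$ of the exponential.

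Next I would set $s:=|1-\f{2}{p}|_{+}$ and factor $e^{-tA}\chi^m(D)u=T_t(\lnr^s u)$ with $T_t:=e^{-tA}\chi^m(D)\lnr^{-s}$, a Fourier multiplier with symbol $M(\xi):=e^{-t\hat{A}(\xi)}\chi^m(\xi)\lnr^{-s}$ supported in the same shell. Since $\lnr^{-s}\lesssim_{\kpz}R^{-s}$ and $|\nabla_\xi^\gamma(\chi^m(\xi)\lnr^{-s})|\lesssim_{\kpz}R^{-s}$ there, the bound above upgrades to $|\nabla_\xi^\gamma M(\xi)|\lesssim_{\kpz}(1+t^{|\gamma|})R^{-s}e^{-\f{\kpz}{4}t}$ for $|\gamma|\le 2$. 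Plancherel gives $\|T_t\|_{L^2\to L^2}=\|M\|_{L^\infty}\lesssim_{\kpz}R^{-s}e^{-\f{\kpz}{4}t}$. For the $L^1\to L^1$ bound (the $L^\infty\to L^\infty$ one is identical, the kernel being the same) I would use, as in the proof of Lemma \ref{Lp bounds for eitb(D)}, the inequality $\|g\|_{L^1(\mathbb{R}^2)}\le\|\langle x\rangle^{-2}\|_{L^2}\|\langle x\rangle^2 g\|_{L^2}\lesssim\|M\|_{L^2}+\sum_{|\gamma|\le 2}\|\nabla_\xi^\gamma M\|_{L^2}$ with $g=\cF^{-1}M$; since $\Supp M$ has area $\lesssim R^2$, this yields $\|\cF^{-1}M\|_{L^1}\lesssim_{\kpz}(1+t^2)R^{1-s}e^{-\f{\kpz}{4}t}$, hence $\|T_t\|_{L^1\to L^1}+\|T_t\|_{L^\infty\to L^\infty}\lesssim_{\kpz}(1+t^2)R^{1-s}e^{-\f{\kpz}{4}t}$.

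Interpolating (Riesz--Thorin) between this $L^1$, resp.\ $L^\infty$, bound and the $L^2$ bound, the power of $R$ that appears in the resulting $L^p\to L^p$ bound is exactly $|1-\f{2}{p}|-s$, which vanishes by the choice $s=|1-\f{2}{p}|_{+}$, while the remaining polynomial-in-$t$ factor is again swallowed by the exponential. This gives $\|T_t\|_{L^p\to L^p}\lesssim_{\kpz}e^{-\f{\kpz}{5}t}$ for all $1<p<\infty$ ($p=2$ being already covered by the $L^2$ bound, since then $s=0$), and undoing the factorization yields $\|e^{-tA}\chi^m(D)u\|_{L^p}\lesssim_{\kpz}e^{-\f{\kpz}{5}t}\|\lnr^s u\|_{L^p}=e^{-\f{\kpz}{5}t}\|u\|_{W^{s,p}}$, as claimed.

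The one genuinely delicate point is the bookkeeping of the $\ep$-dependence: every implicit constant must be tracked as a power of $R=\sqrt{\kpz/\ep}$, and it is precisely the matching of the interpolation exponent with the Sobolev weight $s=|1-\f{2}{p}|_{+}$ that cancels the a priori unbounded factor $R^{1-s}$ produced by integration over the area-$R^2$ shell; everything else (the derivative bounds on $\lambda_\pm$, the absorption of $t$-powers into the exponential, the $L^1$-kernel inequality on $\mathbb{R}^2$) is routine and, the Green matrix being computed by dimension-independent algebra, parallels the corresponding estimates in \cite{rousset2019stability}. Equivalently, one may run the same argument after a Littlewood--Paley splitting $\chi^m(D)=\sum_{k\in J}\Delta_k\chi^m(D)$ with $\#J\lesssim 1$ uniformly in $\ep$, replacing the factor $\lnr^{-s}$ by Bernstein's inequality $\|\Delta_k u\|_{L^p}\lesssim 2^{-ks}\|u\|_{W^{s,p}}$.
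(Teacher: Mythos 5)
Your argument is correct, but it is organized differently from the paper's. The paper factorizes the semigroup on the shell: using \eqref{def of green function} and $\lambda_{\pm}=\ep\Delta\pm ib(D)$ it writes $e^{-tA}\chi^{m}$ as a combination of $e^{\pm itb(D)}e^{\ep t\Delta}\chi^{m}q(D)$ with $q(D)\in\{\f{\ep\Delta}{b(D)},\f{\lnr}{b(D)},Id\}$, then applies the already-proved Lemma \ref{Lp bounds for eitb(D)} to the oscillatory factor (this is exactly where the loss $\ltr^{|1-\f{2}{p}|}$ and the $|1-\f{2}{p}|_{+}$ derivatives enter) together with a kernel estimate for $e^{\ep t\Delta}\chi^{m}$, which after rescaling gives $e^{-\f{1}{4}\kpz t}$ uniformly in $\ep$, the polynomial factor in $t$ being absorbed into the exponential. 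You instead estimate the full matrix symbol $e^{-t\hat{A}(\xi)}\chi^{m}(\xi)\lnr^{-s}$ on the shell directly (derivative bounds on $\lambda_{\pm}$ and on the Green-matrix entries, plus the weighted-$L^2$ kernel inequality, which is the same elementary trick the paper uses both inside Lemma \ref{Lp bounds for eitb(D)} and in its heat-kernel step here), and then recover the $L^p\to L^p$ bound by Riesz--Thorin, with the Sobolev weight $s=|1-\f{2}{p}|_{+}$ offsetting the factor $R^{1-s}$ produced by the area of the shell. This buys you a self-contained proof that bypasses Lemma \ref{Lp bounds for eitb(D)} entirely, at the price of the $R$-bookkeeping; the paper's route is more modular and reuses work already done. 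One cosmetic imprecision: the power of $R$ after interpolation is $|1-\f{2}{p}|-s<0$, not exactly zero, but this is harmless (indeed favourable), since $R=\sqrt{\kpz/\ep}\geq\sqrt{\kpz}$ for $\ep\in(0,1]$, so nonpositive powers of $R$ are $\lesssim_{\kpz}1$; the same remark takes care of the factors $R^{-|\gamma|}$ coming from derivatives hitting $\chi^{m}$ when $R<1$.
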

\begin{proof}
We first prove
\beqs
\|e^{\ep t\Delta}\chi^{m}(D)f\|_{L^p}\lesssim_{\kpz}e^{-\f{1}{4}\kpz t}\|u\|_{L^p}.
\eeqs
which follows from the Young's inequality and the fact:
$\|f\|_{L^1}\lesssim \|f\|_{L^2}^{\f{1}{2}}\|x^2f\|_{L^2}^{\f{1}{2}}.$
Indeed, one has:
\beno
\|\cF^{-1}\big(e^{-\ep|\xi|^{2}t}\chi^{m}(\xi)\big)\|_{L^1}&=&\|\cF^{-1}(e^{-\kpz\xi|^{2}t}\chi_2(\xi))\|_{L^1}\\
&\lesssim&\|\cF^{-1}\big(e^{-\kpz|\xi|^{2}t}\chi_2(\xi)\big)\|_{L^2}^{\f{1}{2}}\|x^2\cF^{-1}\big(e^{-\kpz|\xi|^{2}t}\chi_2(\xi)\big)\|_{L^2}^{\f{1}{2}}\\
&\lesssim& e^{-\f{1}{2}\kpz t}\langle\kpz t\rangle\lesssim e^{-\f{1}{4}\kpz t}.
\eeno
By the definition of the Green matrix,
\beno
   e^{-t\hat{A}(\xi)}&=&\frac{1}{\lambda_{+}-\lambda_{-}}
   \left(
  \begin{array}{cc}
    \lp e^{\lm t}-\lm e^{\lp t}&(e^{\lm t}-e^{\lp t})\lxr\\
    (e^{\lp t}-e^{\lm t})\lxr&\lp e^{\lp t}-\lm e^{\lm t}\\
  \end{array}
\right).
   \eeno
and eigenvalue $\lambda_{\pm}=\epd\pm ib(D)$ , we see that $e^{-tA}$ is indeed the combination of terms like $e^{\lambda_{\pm}(D)}q(D)$
where $q(D)\in\{\f{\ep\Delta}{b(D)},\f{\lnr}{b(D)},Id\}$. Therefore, by Lemma \ref{Lp bounds for    eitb(D)} and the definition of $\chi^m$,
\beno
\|e^{itb(D)}e^{\ep t\Delta}\chi^{m}q(D)(u)\|_{L^p}&\lesssim&
\|e^{itb(D)}\tilde{\chi}^{m}e^{\ep t\Delta}\chi^{m}q(D)\|_{L^p}\\
&\lesssim_{\kpz}& \ltr^{|1-\f{2}{p}|}\|e^{\ep t\Delta}\chi^{m}q(D)u\|_{W^{|1-\f{2}{p}|_{+},p}}\\
&\lesssim_{\kpz}& \ltr^{|1-\f{2}{p}|}e^{-\f{1}{4}\kpz t}\|\chi^m q(D)u\|_{W^{|1-\f{2}{p}|_{+},p}}\\
&\lesssim_{\kpz}& e^{-\f{1}{5}\kpz t}\|u\|_{W^{|1-\f{2}{p}|_{+},p}}.
\eeno
 \end{proof}
\subsection{Bilinear estimates}

As we shall use the normal form transformation, it is necessary
for us get some continuous properties for  bilinear operators defined by \eqref{defbilinear}.
To start, we present some elementary properties of bilinear multipliers which is useful to derive the bilinear estimates.
\begin{prop}\label{derivative estiamte for phase}
 Define the phase function $$\phi_{\mu,\nu}(\xi+\eta,\eta)=i(b(\xi+\eta)-\mu b(\xi)-\nu b(\eta))+Z(\xi,\eta),$$
and multiplier function $$m_{\mu,\nu}(\xi+\eta,\eta)=\lxper{\chi^{L}(\xi)\chi^{L}(\eta)\chi^{L}(\xi+\eta)}
n_{\mu}(\xi)n_{\nu}(\eta)$$
where $b(\xi)=\sqrt{1+|\xi|^{2}-\ep^{2}|\xi|^{4}}, Z(\xi,\eta)=\ep(|\xi+\eta|^2-|\xi|^2-|\eta|^2)$
 and $\mu,\nu\in\{+,-\}.$
$n_{+}\in\{-\f{\lambda_{-}}{\lnr},1\}$,
$n_{-}\in\{-\f{\lambda_{+}}{\lnr},1\}$.
Suppose $\kpz\leq \f{1}{200}$,
then for any multi-index $\alpha,\beta\in \mathbb{N}^2$,  the following estimate hlods uniformly in $\ep\in(0,1]$: 
\beqs
|\pab \f{m_{\mu,\nu}}{\phi_{\mu,\nu}}(\xi+\eta,\eta)|\lesssim_{\alpha,\beta,\kpz}\lxper \min\{b(\xi),b(\eta),b(\xi+\eta)\}
\eeqs
\end{prop}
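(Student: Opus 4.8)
The plan is to reduce the estimate on $\pab\frac{m_{\mu,\nu}}{\phi_{\mu,\nu}}$ to two ingredients: a lower bound for $|\phi_{\mu,\nu}|$ on the support of $m_{\mu,\nu}$, and derivative bounds for the numerator $m_{\mu,\nu}$ and for $\phi_{\mu,\nu}$ itself (plus its reciprocal). First I would observe that on the support of $\chi^L(\xi)\chi^L(\eta)\chi^L(\xi+\eta)$ we have $\ep|\xi|^2, \ep|\eta|^2, \ep|\xi+\eta|^2\lesssim \kpz$, so $b(\xi),b(\eta),b(\xi+\eta)$ are all comparable to $1$ and $\ep(|\xi|^2+|\eta|^2)\lesssim\kpz$. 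The key nondegeneracy is $\Re\phi_{\mu,\nu} = -Z(\xi,\eta)\cdot 0$... more precisely, $\mathrm{Re}\,\phi_{\mu,\nu}=Z(\xi,\eta)$ and $\mathrm{Im}\,\phi_{\mu,\nu}=b(\xi+\eta)-\mu b(\xi)-\nu b(\eta)$. When $(\mu,\nu)\neq(+,+)$ the imaginary part is bounded below (it behaves like the Euler–Poisson phase $\lnr\mp\langle\xi\rangle\mp\langle\eta\rangle$, which is $\gtrsim 1$), so $|\phi_{\mu,\nu}|\gtrsim 1$ trivially. When $(\mu,\nu)=(+,+)$, one has $\phi_1:=b(\xi+\eta)-b(\xi)-b(\eta)\le -c<0$ for $\kpz$ small (this is exactly the statement used in \eqref{eq1}, inherited from the fact that $\lnr-\langle\xi\rangle-\langle\eta\rangle\le -1$ and $b$ is a small perturbation of $\lnr$ at these frequencies); hence $|\phi_{+,+}|\ge|\mathrm{Im}\,\phi_{+,+}|=|\phi_1|\gtrsim 1$. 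So in all cases $|\phi_{\mu,\nu}|\gtrsim 1$ uniformly in $\ep\in(0,1]$, and also $|\phi_{\mu,\nu}|\lesssim 1$ on the support.

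Next I would record the derivative estimates. On the relevant frequency region, $b(\xi)=\sqrt{1+|\xi|^2-\ep^2|\xi|^4}$ is smooth with $|\pa^\gamma b(\xi)|\lesssim_\gamma 1$ uniformly in $\ep$ (the $\ep^2|\xi|^4$ term contributes $\ep^2|\xi|^{4-|\gamma|}\lesssim\ep^{|\gamma|/2}\kpz^{2-|\gamma|/2}\lesssim 1$ after using $\ep|\xi|^2\lesssim\kpz$), so the same holds for $b(\xi+\eta),b(\eta)$ composed with the linear changes of variables. Likewise $Z(\xi,\eta)=\ep(|\xi+\eta|^2-|\xi|^2-|\eta|^2)=2\ep\,\xi\cdot\eta$ has $|\pab Z|\lesssim \ep(|\xi|+|\eta|+1)^{\,2-|\alpha|-|\beta|}\lesssim 1$ on the support (again by $\ep|\xi|^2,\ep|\eta|^2\lesssim\kpz\le 1$). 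Hence $|\pab\phi_{\mu,\nu}|\lesssim 1$ for all $\alpha,\beta$. For the cutoff factor, $\chi^L(\xi)=\chi(\sqrt{\ep/\kpz}\,\xi)$ gives $|\pa^\gamma\chi^L(\xi)|\lesssim_\gamma(\ep/\kpz)^{|\gamma|/2}\lesssim\kpz^{-|\gamma|/2}$ — in particular bounded by a $\kpz$-dependent constant, which is what is allowed — and the amplitudes $n_\mu$ are (perturbations of) zero-order multipliers bounded with all derivatives on the support. One subtlety: $n_+$ can be $-\lambda_-/\lnr = (\ep|\xi|^2 - ib(\xi))/\lnr$, which carries a factor $\ep|\xi|^2/\lnr$; its derivatives are handled exactly as $Z$. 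The only genuinely singular factor is $\lxper$ in $m_{\mu,\nu}$, but $|\pab\lxper|\lesssim\lxper$, and since we want an upper bound of the form $\lxper\cdot\min\{b(\xi),b(\eta),b(\xi+\eta)\}$ and $\min\{b(\xi),b(\eta),b(\xi+\eta)\}\approx 1$ on the support, the factor $\lxper$ is exactly the slack we need.

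Then I would assemble everything via the Leibniz rule and the Faà di Bruno formula applied to $1/\phi_{\mu,\nu}=g(\phi_{\mu,\nu})$ with $g(z)=1/z$: since $|\phi_{\mu,\nu}|\gtrsim 1$ and all derivatives of $\phi_{\mu,\nu}$ are $O(1)$, all derivatives of $1/\phi_{\mu,\nu}$ are $O(1)$ uniformly in $\ep$. Multiplying by the $O(\lxper)$ bound for $\pab m_{\mu,\nu}$ (distributing derivatives among $\lxper$, $\chi^L(\xi)\chi^L(\eta)\chi^L(\xi+\eta)$, $n_\mu(\xi)$, $n_\nu(\eta)$) yields $|\pab\frac{m_{\mu,\nu}}{\phi_{\mu,\nu}}|\lesssim_{\alpha,\beta,\kpz}\lxper$, and since on the support $\lxper\approx\lxper\min\{b(\xi),b(\eta),b(\xi+\eta)\}$, the claimed estimate follows. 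The main obstacle — and the only part requiring genuine care rather than bookkeeping — is the uniform-in-$\ep$ lower bound $|\phi_{+,+}|\gtrsim 1$ on the support of the triple cutoff: one must check that the perturbation $b-\lnr$ (of size $O(\ep|\xi|^4)=O(\kpz\,|\xi|^2)$, and its effect on the phase) is small enough, for $\kpz$ sufficiently small, not to destroy the strict negativity $\lnr - \langle\xi\rangle - \langle\eta\rangle\le -1$; this is where the smallness of $\kpz$ is used and is morally the same computation underlying \eqref{eq1}. Everything else is routine Leibniz/chain-rule bookkeeping, and I would only spell out the $(\mu,\nu)=(+,+)$ phase lower bound in detail, referring to the analogous $\ep=1$ computations in \cite{MR3024265,MR3274788} and to the 3d version in \cite{rousset2019stability} for the rest.
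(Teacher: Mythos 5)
Your argument hinges on the claim that on the support of $\chi^{L}(\xi)\chi^{L}(\eta)\chi^{L}(\xi+\eta)$ the quantities $b(\xi),b(\eta),b(\xi+\eta)$ are comparable to $1$ and that $|\phi_{\mu,\nu}|\gtrsim 1$ uniformly in $\ep$. This is where the proof breaks down. The cutoff is $\chi^{L}(\xi)=\chi(\sqrt{\ep/\kpz}\,\xi)$, so its support is only $\ep|\xi|^{2}\lesssim\kpz$; since the estimate must be uniform in $\ep\in(0,1]$, the admissible frequencies go up to $|\xi|\sim\sqrt{\kpz/\ep}$, which is unbounded as $\ep\to 0$, and there $b(\xi)\approx\langle\xi\rangle$ is large, not $O(1)$. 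Correspondingly, the inequality you invoke, $\langle\xi+\eta\rangle-\langle\xi\rangle-\langle\eta\rangle\le -1$, is false: for $\xi=\eta=Ne_{1}$ one has $\langle 2N\rangle-2\langle N\rangle\approx -\tfrac{3}{4N}$, and choosing $\ep$ small enough that $\ep N^{2}\le\kpz$ (and, say, $\ep=N^{-3}$ so that $Z=2\ep\,\xi\cdot\eta$ is also $O(N^{-1})$) gives $|\phi_{+,+}|\lesssim N^{-1}$ on the support. The same degeneracy occurs for $(\mu,\nu)=(+,-),(-,+)$; only $(-,-)$ is bounded below by a constant. So the step "all derivatives of $1/\phi_{\mu,\nu}$ are $O(1)$, hence the bound $\lxper$ suffices" is not available, and indeed the factor $\min\{b(\xi),b(\eta),b(\xi+\eta)\}$ in the statement is precisely the quantitative record of this degeneracy (the phase is only bounded below like the reciprocal of the smallest of the three weights); it is not "slack".

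The paper's proof is built to handle exactly this. For $(\mu,\nu)=(+,+)$ it rationalizes, writing $\frac{1}{\phi_{++}}=i\,\frac{b(\xi)+b(\eta)+b(\xi+\eta)+iZ}{B}$ with $B=A-Z^{2}+2iZ(b(\xi)+b(\eta))$ and $A=(b(\xi)+b(\eta))^{2}-b^{2}(\xi+\eta)$, proves the uniform lower bound $A\gtrsim\frac{(b(\xi)+b(\eta))^{2}}{b(\xi)b(\eta)}\gtrsim 1$, and — this is the technical heart, not routine bookkeeping — the self-reproducing derivative bounds $|\pab A|\lesssim_{\alpha,\beta,\kpz}A$ (quoted from the appendix of \cite{rousset2019stability}), which give $|\pab(1/B)|\lesssim 1/|B|$ without losing powers of the frequencies. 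Leibniz then yields a bound $\lesssim\lxper\,\frac{b(\xi)+b(\eta)+b(\xi+\eta)}{A}\lesssim\lxper\min\{b(\xi),b(\eta),b(\xi+\eta)\}$. If you want to salvage your write-up, you must replace the claimed constant lower bound for $|\phi_{\mu,\nu}|$ by a lower bound of the form $|\phi_{\mu,\nu}|\gtrsim\big(\min\{b(\xi),b(\eta),b(\xi+\eta)\}\big)^{-1}$ on the support (uniformly in $\ep$), and, more importantly, control arbitrary $\xi,\eta$-derivatives of $1/\phi_{\mu,\nu}$ without losing factors of $\langle\xi\rangle,\langle\eta\rangle$, which is exactly what the $|\pab A|\lesssim A$ mechanism is for; your Faà di Bruno step does not provide this once $|\phi|$ is no longer bounded below by a constant.
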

\begin{proof}
We only present the proof for $\mu=\nu='+',$ since the others are easier or can be obtained by symmetry.
 At first, we have
\beno
\f{1}{\phi_{++}}&=&i\f{b(\xi)+b(\eta)+b(\xi+\eta)+iZ(\xi,\eta)}{(b(\xi)+b(\eta)+iZ(\xi,\eta))^2-b^2(\xi+\eta)}\\
&\define& i \f{b(\xi)+b(\eta)+b(\xi+\eta)+iZ(\xi,\eta)}{B}.
\eeno
where 
\beno
B&=&\big(b(\xi)+b(\eta)\big)^2-b^2(\xi+\eta)-Z^2(\xi,\eta)+2iZ(\xi,\eta)\big(b(\xi)+b(\eta)\big)\\
&\define& A-Z^2(\xi,\eta)+2iZ(\xi,\eta)\big(b(\xi)+b(\eta)\big) 
\eeno
Note that $A$ has the lower bound:
\ben\label{Alowbdd}
A&=&1+2b(\xi)b(\eta)-2\xi\cdot\eta+\ep^{2}(|\xi|^{4}+|\eta|^{4}-|\xi+\eta|^{4})\nonumber\\
&\geq& 1-27\kpz^2+2b(\xi)b(\eta)-2\xi\cdot\eta\nonumber\\
&=&\f{(1-27\kpz^2+2b(\xi)b(\eta))^{2}-4|\xi\cdot\eta|^2}{1-27\kpz^2+2b(\xi)b(\eta)+2\xi\cdot\eta}
\gtrsim\f{(b(\xi)+b(\eta))^{2}}{b(\xi)b(\eta)}\gtrsim 1.
\een

We will prove that on the support of $\chi^{L}(\xi)\chi^{L}(\eta)\chi^{L}(\xi+\eta)$, for any multi-index $\alpha,\beta\in \mathbb{N}^2$, the following property holds:
\beq\label{ineq for B}
|\pab\f{1}{B}|\lesssim_{\alpha,\beta,\kpz}\f{1}{|B|}
\eeq
which is an easy consequence of Leibniz's rule and
\beq\label{derivative of B}
|\pab{B}|\lesssim_{\alpha,\beta,\kpz}|B|,\quad \forall \alpha,\beta\in \mathbb{N}^2.
\eeq
However, \eqref{derivative of B} can be derived once we have the estimate for $A$:
\beq\label{derivative of A}
|\pab{A}|\lesssim_{\alpha,\beta,\kpz}A,\quad \forall \alpha,\beta\in \mathbb{N}^2.
\eeq
Indeed, since 
\beqs
|\pab Z^{2}(\xi,\eta)|+|\pab Z(\xi,\eta)(b(\xi)+b\big(\eta)\big)|\leq P(\alpha,\beta,\kpz), \quad for  (\xi,\eta)\in \Supp m(\xi,\eta)
\eeqs
where $P(\alpha,\beta,\kpz)$ is a polynomial with respect to $\kpz$ which can be bounded by a constant $C(\alpha,\beta)$ 
if we choose $\kpz$ small (say $\kpz\leq\f{1}{200}$), we can use \eqref{derivative of A},\eqref{Alowbdd}
to get that:
\beqs
|\pab{B}|\leq |\pab{A}|+C(\alpha,\beta)\lesssim_{\alpha,\beta,\kpz} A+C(\alpha,\beta)\lesssim_{\alpha,\beta,\kpz}A\lesssim_{\alpha,\beta,\kpz}|B|.
\eeqs
Nevertheless, we note that the estimate of \eqref{derivative of A} has been proved in the appendix of \cite{rousset2019stability}.
Therefore, inequality \eqref{ineq for B} holds, which leads to the following computation:
\beno
\big|\pab \f {m_{++}(\xi,\eta)}{\phi_{++}}\big|&=&\big|\sum c_{\alpha_1\alpha_2\beta_1\beta_2}\partial_{\xi}^{\alpha_1}\partial_{\eta}^{\beta_1}(m_{++}(\xi,\eta))
\partial_{\xi}^{\alpha_2}\partial_{\eta}^{\beta_2}\f {b(\xi)+b(\eta)+b(\xi+\eta)+iZ(\xi,\eta)}{B}\big|
\\
&\lesssim_{\kpz}&\lxper(b(\xi)+b(\eta)+b(\xi+\eta))\f{1}{A}\\
&\lesssim_{\kpz}&\lxper \min \{b(\xi),b(\eta),b(\xi+\eta)\}.
\eeno
\end{proof}
This proposition in hand, we then show the following bilinear estimate:
\begin{lem}\label{lem bilinear estimate}
Let $m_{\mu\nu},\phi_{\mu\nu}$ being defined as the last proposition, one has bilinear estimate:
\beq \label{bilinear estimate}
 \|T_{\f{m_{\mu\nu}}{\phi_{\mu\nu}}}(f,g)\|_{L^{p}}\lesim_{\kpz} \|f\|_{W^{2_{+},q_1}}\|g\|_{W^{2,r_1}}+\|f\|_{W^{2,r_2}}\|g\|_{W^{2_{+},q_2}}
\eeq
     where    $\f{1}{p}=\f{1}{q_1}+\f{1}{r_1}=\f{1}{q_2}+\f{1}{r_2},$ $1<r_1,r_2\leq +\infty, 1\leq q_1,q_2<+\infty,$
     $T_{\f{m_{\mu\nu}}{\phi_{\mu\nu}}}$ is the bilinear operator defined in \eqref{eqbilinear} and $k_{+}$ is a real number slightly larger than $k$.
\end{lem}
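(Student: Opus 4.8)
The plan is to reduce the bilinear estimate to a convolution / Coifman--Meyer-type statement by exploiting the multiplier bound from Proposition~3.4. First I would observe that, by Proposition~3.4, the symbol $M_{\mu\nu}(\xi,\eta)\define \f{m_{\mu\nu}}{\phi_{\mu\nu}}(\xi,\eta)$ (written in the $(\xi-\eta,\eta)$ variables after the change of notation in \eqref{defbilinear}) satisfies, on the compact frequency region where the cutoffs $\chi^L(\xi)\chi^L(\xi-\eta)\chi^L(\eta)$ do not vanish, the pointwise bound $|M_{\mu\nu}(\xi,\eta)|\lesssim_{\kpz}\lxr\min\{b(\xi),b(\xi-\eta),b(\eta)\}$ together with analogous bounds for all its $\xi,\eta$-derivatives, all uniform in $\ep\in(0,1]$. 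Since on this region $b(\cdot)\sim 1$ and $\lxr\sim 1$, the upshot is simply that $M_{\mu\nu}$ is a \emph{smooth, compactly supported} symbol with all derivatives bounded uniformly in $\ep$; the only $\ep$-dependence sits in the size of the support (which grows like $\ep^{-1/2}$) and in the location of the annular pieces.

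Next I would split $M_{\mu\nu}$ by a Littlewood--Paley decomposition adapted to the $\ep$-scaling, i.e.\ write $M_{\mu\nu}=\sum_{j}M_{\mu\nu}\,\Phi_j(\xi)$ where the sum runs only over $2^j\lesssim\sqrt{\kpz/\ep}$ (at most $O(\log(1/\ep))$ many terms, but in fact the rapidly decaying tails will make the sum converge independently of this cutoff), and similarly decompose the inputs $f,g$. For each dyadic piece one checks the Hörmander--Mikhlin/Coifman--Meyer symbol condition $|\pab M_{\mu\nu}(\xi,\eta)|\lesssim_{\kpz}(|\xi-\eta|+|\eta|)^{-|\alpha|-|\beta|}$ on the relevant annulus, which is immediate from Proposition~3.4 and Bernstein's inequality since the symbol is smooth and the derivatives land on a scale-$2^j$ object. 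The classical bilinear multiplier theorem then gives $\|T_{M_{\mu\nu}\Phi_j}(f,g)\|_{L^p}\lesssim_{\kpz}\|\Delta_j f\|_{L^{q_1}}\|\tilde\Delta_j g\|_{L^{r_1}}$ for each piece, with constants uniform in $j$ and in $\ep$ because, after rescaling by $\sqrt{\ep/\kpz}$, all pieces look the same. Summing in $j$ and absorbing the small derivative loss $2_+$ on one factor — this is precisely where the exponent $2_+$ rather than $2$ is needed, to make the geometric-type series $\sum_j 2^{j\cdot 0_+}2^{-j\cdot 2_+}\cdots$ converge and to pass from $\ell^2$ Littlewood--Paley square functions to the product of $W^{2,q}$ and $W^{2_+,r}$ norms via Bernstein and Hölder — yields the first term on the right-hand side of \eqref{bilinear estimate}; the second term is obtained by interchanging the roles of $f$ and $g$, which is legitimate because the multiplier bound in Proposition~3.4 is symmetric in the three frequencies.

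The main obstacle I anticipate is \emph{not} the Coifman--Meyer machinery itself but keeping all constants genuinely uniform in $\ep$ as $\ep\to 0$: the support of $M_{\mu\nu}$ is an $\ep^{-1/2}$-dilate of a fixed set, so a naive application of a multiplier theorem on that set would produce $\ep$-dependent constants. The resolution is the rescaling $\xi\mapsto\sqrt{\kpz/\ep}\,\xi$ observed above, under which $\chi^L(\sqrt{\ep/\kpz}\,\cdot)$ becomes the fixed cutoff $\chi_1+\chi_2$ and, by the derivative bounds of Proposition~3.4, the rescaled symbol and all its derivatives are bounded by constants depending only on $\kpz$ (not $\ep$); one then runs the argument for the fixed rescaled symbol and scales back, noting that the bilinear-multiplier operator norm on $L^{q_1}\times L^{r_1}\to L^p$ is scale-invariant while the Sobolev norms $W^{2,q}$ pick up only harmless powers of the scaling parameter that are compensated by the $\lxr$-type weights already present. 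A secondary (routine) point is justifying the endpoint exponents $r_i=\infty$ and $q_i=1$: these are handled by the standard device of replacing $L^\infty$ by $BMO$ (or by interpolating from $r_i<\infty$) and by using the $W^{2,1}\hookrightarrow$ (Fourier side) bound coming from $\|f\|_{L^1}\lesssim\|f\|_{L^2}^{1/2}\|x^2 f\|_{L^2}^{1/2}$, exactly as in the proofs of Lemmas~3.1--3.3 above.
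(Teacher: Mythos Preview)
Your proposal has a substantive gap at its starting point. You write that on the support of $\chi^L(\xi)\chi^L(\xi-\eta)\chi^L(\eta)$ one has ``$b(\cdot)\sim 1$ and $\lxr\sim 1$'' and conclude that $M_{\mu\nu}=\f{m_{\mu\nu}}{\phi_{\mu\nu}}$ is a smooth, compactly supported symbol with all derivatives bounded uniformly in~$\ep$. This is false: the cutoff $\chi^L$ is supported on $\{\ep|\xi|^2\le 3\kpz\}$, so $|\xi|$ ranges up to order $\sqrt{\kpz/\ep}$ (as you yourself note two sentences later), and on this region $b(\xi)\sim\lxr$, which can be of size $\ep^{-1/2}$. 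Proposition~3.4 thus says that $M_{\mu\nu}$ and \emph{all} its derivatives are bounded by $\lxr\min\{\lxmer,\ler\}$, a quantity that grows up to $\sim\ep^{-1}$ on the support. In particular the symbol is not of Coifman--Meyer type (its derivatives do not gain decay), and your rescaling $\xi\mapsto\sqrt{\kpz/\ep}\,\xi$ does not cure this: it normalizes the support but leaves the rescaled symbol of size $\sim\ep^{-1}$. The Sobolev exponents $2$ and $2_+$ on the right of \eqref{bilinear estimate} are there precisely to absorb this growth of the multiplier, not merely to make a dyadic sum converge.

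The paper's argument is both different and considerably more elementary. One takes a smooth partition $\psi_1+\psi_2=1$ with $\psi_1$ supported on $\{\lxmer\ge\ler/2\}$ and $\psi_2$ on $\{\ler>\lxmer\}$, and factors
\[
\f{m_{\mu\nu}}{\phi_{\mu\nu}}=M_1(\xi,\eta)\,\lxmer^{2_+}\ler^2+M_2(\xi,\eta)\,\ler^{2_+}\lxmer^2.
\]
From Proposition~3.4 together with $\lxr\lesim\lxmer$ on $\Supp\psi_1$ one gets $|\pab M_1|\lesim_{\kpz}\lxmer^{-1_+}\ler^{-1}$ uniformly in $\ep$, so $M_1,\p_\xi^3M_1,\p_\eta^3M_1\in L^2(\mathbb{R}^4)$ with $\ep$-independent norm; via $\|g\|_{L^1}\lesim\|(1+|x|^3+|y|^3)g\|_{L^2}$ this gives $\cF^{-1}M_1\in L^1_{x,y}$. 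The bilinear operator $T_{M_1\lxmer^{2_+}\ler^2}(f,g)$ is then written explicitly as an integral against this $L^1$ kernel applied to $\lnr^{2_+}f$ and $\lnr^2 g$, and Minkowski plus H\"older yield the first term on the right of \eqref{bilinear estimate} directly, for the full stated range of exponents including the endpoints. The piece with $M_2$ gives the second term symmetrically. No Coifman--Meyer theory, no Littlewood--Paley decomposition, and no separate endpoint argument are required.
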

\begin{proof}
As before, we only treat the case $T_{\f{m_{++}}{\phi_{++}}}.$ Let $\psi_1,\psi_2\in C_{b}^{\infty}(\mathbb{R}^{4})$ which satisfy the following conditions:
 \begin{equation*}
\left\{
\begin{array}{l}
\displaystyle  \psi_1+\psi_2=1 \qquad\forall (\xi,\eta), \\
\displaystyle \Supp \psi_1 \subset \{ (\xi,\eta)\big| \lxmer\geq \f{\ler}{2}\},\\
\displaystyle \Supp \psi_2 \subset \{ (\xi,\eta)\big| \ler> \lxmer\}.\\
\end{array}
\right.
\end{equation*}
We write
\beno
\frac{m_{++}}{\phi_{++}}(\xi,\eta)&=&\f{m_{++}\psi_1(\xi,\eta)}{\phi_{++}\lxmer^{2_{+}}\ler^2} \lxmer^{2_{+}}\ler^2+\f{m_{++}\psi_2(\xi,\eta)}{\phi_{++}\ler^{2_{+}} \lxmer^2}\ler^{2_{+}} \lxmer^2\\
&\define& M_1(\xi,\eta)\lxmer^{2_{+}}\ler^2+M_2(\xi,\eta)\ler^{2_{+}} \lxmer^2.
\eeno
By Proposition \ref{derivative estiamte for phase}, we have for any $\alpha,\beta$ with $|\alpha|+|\beta|\leq 3$,
\beqs
|\pab{M_1}|\leq I_{\lxmer\geq \f{\ler}{2}}\lxmer^{-1_{+}}\ler^{-1}.
\eeqs
Therefore,
 $ M_{1}, \partial_{\xi}^{3}M_{1}, \partial_{\eta}^{3}M_{1}\in L^2(\mathbb{R}^{4})$,
which leads to the fact: $\mathcal{F}^{-1}(M_1)(x,y)\in L^{1}_{x,y}$. Indeed,
 \beqs
 \|\mathcal{F}^{-1}(M_1)(x,y)\|_{L_{x,y}^{1}}\lesssim \|(1+|x|^{3}+|y|^{3})^{-1}\|_{L_{x,y}^2}(\|M_1\|_{L^2}+\|\partial_{\xi}^{3}M_{1}\|_{L^2}+\|\partial_{\eta}^{3}M_{1}\|_{L^2}).
 \eeqs
By the definition of bilinear operator $T_{m}$ (\ref{eqbilinear}) and Fourier transform:
\beqs
T_{M_{1}\lxr^{2_{+}}\ler^2}(f,g)=\int (\cF^{-1}M_{1})(x',y'-x')\langle D_{x}\rangle^{2_{+}}f(x-x')\langle D^{2}_{x}\rangle g(x-y')\d x'\d y'.
\eeqs
Therefore, By Minkowski's inequality,
\beno
\|T_{M_{1}\lxr^{2_{+}}\ler^2}(f,g)\|_{L^{p}}
&\leq& \int \|\langle D^{2}_{x}\rangle g\|_{L^{r_1}}\|\int (\cF^{-1}M_{1})(x',y'-x')\langle D_{x}\rangle^{2_{+}}f(x-x')\d x'\|_{L^{q_1}}\d y'\nonumber\\
&\leq& \|\cF^{-1}M_{1}\|_{L_{x,y}^{1}} \|f\|_{W^{2_{+},p_1}}\|g\|_{W^{2,r_1}}.
\eeno
The similar result for  $M_2$ can be derived in  the same fashion.
\end{proof}

\begin{rmk}\label{rmk for bilinear}
It is easy to adapt the proof of the above lemma to get that:
 \beq \label{bilinear estimate 2}
     \|T_{\f{m_{\mu\nu}}{\phi_{\mu\nu}}}(f,g)\|_{L^{p}}\lesim_{\kpz} \|f\|_{W^{3_{+},q_1}}\|g\|_{W^{1,r_1}}+\|f\|_{W^{1,r_2}}\|g\|_{W^{3_{+},q_2}}.
  \eeq

\end{rmk}

\begin{rmk}
     Of course the norm of $T_{\f{m_{\mu\nu}}{\phi_{\mu\nu}}}$ in \eqref{bilinear estimate} and \eqref{bilinear estimate 2} is dependent on $2_{+}-2$ or $3_{+}-3$, but when we use this lemma, we fixed $2_{+}$ and $3_{+}$.
 \end{rmk}
\begin{rmk}
From now on, we will fix $\kpz=\f{1}{200}$.
\end{rmk}
\begin{cor}\label{cortrilinear}
Recall $B(w,w)\approx \R\lnr(\R w)^2$ is defined in \eqref{def of B}, the following trilinear estimate holds
\beno
\|T_{\f{m}{\phi}}(\R B(w,w),\R w)\|_{W^{\si,p}}\lesim \| w\|_{W^{2,p_1}}\| w\|_{W^{2,p_2}}\|w\|_{W^{\si+3_{+},p_3}}
\eeno
where $1<p_1,p_2,p_3<\infty$ and $\f{1}{p}=\f{1}{p_1}+\f{1}{p_2}+\f{1}{p_3}.$
\end{cor}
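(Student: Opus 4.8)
The plan is to combine the bilinear estimate of Lemma~\ref{lem bilinear estimate} (in the form of Remark~\ref{rmk for bilinear}) with a standard product estimate in order to absorb the extra factor $B(w,w)$. The point is that $T_{\f{m}{\phi}}$ is, after the Leibniz-type splitting of the proof of Lemma~\ref{lem bilinear estimate}, essentially a bounded bilinear operator whose symbol costs $3_+$ derivatives on one argument and $1$ on the other; the difficulty to control is that one argument is not $w$ itself but the quadratic quantity $B(w,w)\approx \R\lnr(\R w)^2$.

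First I would apply \eqref{bilinear estimate 2} from Remark~\ref{rmk for bilinear} with the roles arranged so that the $3_+$ derivatives (and the $\si$ derivatives needed for the $W^{\si,p}$ norm) fall on the genuine $w$ slot and only $1$ derivative falls on $B(w,w)$: schematically
\beno
\|T_{\f{m}{\phi}}(\R B(w,w),\R w)\|_{W^{\si,p}}\lesim \|\R B(w,w)\|_{W^{1,q}}\,\|\R w\|_{W^{\si+3_+,p_3}}
\eeno
for a suitable exponent $q$ with $\tfrac1p=\tfrac1q+\tfrac1{p_3}$, using that $\R$ is a zero-order multiplier bounded on every $L^r$, $1<r<\infty$. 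Since $B(w,w)\approx \R\lnr(\R w)^2$ and $\lnr=\langle\nabla\rangle$ and $\R$ are order $\le 1$ and order $0$ multipliers respectively, one gets $\|\R B(w,w)\|_{W^{1,q}}\lesim \|(\R w)^2\|_{W^{2,q}}$. Then the classical product (Leibniz) estimate in $W^{2,q}$ together with the $L^r$-boundedness of $\R$ gives $\|(\R w)^2\|_{W^{2,q}}\lesim \|w\|_{W^{2,p_1}}\|w\|_{W^{2,p_2}}$ whenever $\tfrac1q=\tfrac1{p_1}+\tfrac1{p_2}$ and $1<p_1,p_2<\infty$. Chaining these inequalities yields exactly the claimed bound with $\tfrac1p=\tfrac1{p_1}+\tfrac1{p_2}+\tfrac1{p_3}$.

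The one point that needs a little care — and which I expect to be the main (minor) obstacle — is the bookkeeping of which slot receives how many derivatives when invoking Remark~\ref{rmk for bilinear}, because \eqref{bilinear estimate 2} is stated with the high-derivative weight on the \emph{first} argument while here the quadratic term sits in the first slot; one has to use the symmetric version (the ``$M_2$'' half of the proof of Lemma~\ref{lem bilinear estimate}, which is noted there to be identical by symmetry) so that the $W^{3_+,\cdot}$ and extra $W^{\si,\cdot}$ regularity is spent on $\R w$ and only $W^{1,\cdot}$ on $\R B(w,w)$. Once the slots are arranged correctly, no derivative is lost: the symbol $\f{m}{\phi}$ contributes the bounded kernel, $\R$ costs nothing, $\lnr$ inside $B$ costs one derivative, and the remaining budget is exactly $W^{2}\times W^{2}$ on the two copies of $w$ coming from the product rule — matching the statement.
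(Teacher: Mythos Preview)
Your overall strategy—apply the bilinear estimate to the pair $(\R B(w,w),\R w)$ and then control $B(w,w)$ by a product estimate—is the intended one, and Step~2 of your argument (bounding $\|\R B(w,w)\|_{W^{1,q}}\lesim\|w\|_{W^{2,p_1}}\|w\|_{W^{2,p_2}}$) is fine. The gap is in Step~1, precisely at the point you flagged.

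You cannot ``use only the $M_2$ half'' of Lemma~\ref{lem bilinear estimate}. The decomposition $\psi_1+\psi_2=1$ is a partition of unity, so both pieces are present in $T_{m/\phi}$; the two terms on the right of \eqref{bilinear estimate 2} are a \emph{sum}, not alternatives. On the $\psi_1$ piece one has $\lxmer\gtrsim\ler$, so the output weight $\lxr^{\si}$ and the symbol growth land on the \emph{first} argument $f=\R B(w,w)$. With the $(3_+,1)$ distribution of Remark~\ref{rmk for bilinear} this forces $\|\R B(w,w)\|_{W^{\si+3_+,q_1}}\lesim\|(\R w)^2\|_{W^{\si+4_+,q_1}}$, i.e.\ $\si+4_+$ derivatives on one copy of $w$, which exceeds the $\si+3_+$ claimed in the corollary.

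The fix is to choose the derivative split \emph{differently on the two pieces}: on $\psi_2$ keep your $(3_+,1)$ distribution (Remark~\ref{rmk for bilinear}), giving $\|\R B\|_{W^{1,q}}\|\R w\|_{W^{\si+3_+,p_3}}$, which you handle correctly; on $\psi_1$ use instead the $(2_+,2)$ distribution of Lemma~\ref{lem bilinear estimate}, giving
\[
\|\R B(w,w)\|_{W^{\si+2_+,q_1}}\|\R w\|_{W^{2,r_1}}
\lesim \|(\R w)^2\|_{W^{\si+3_+,q_1}}\|w\|_{W^{2,r_1}}
\lesim \|w\|_{W^{\si+3_+,p_3}}\|w\|_{L^{p_2}}\|w\|_{W^{2,p_1}},
\]
which is bounded by the stated right-hand side. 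In short, the corollary does not follow from a black-box application of Remark~\ref{rmk for bilinear}; one has to go back into the proof of Lemma~\ref{lem bilinear estimate} and rebalance the weights on the $\psi_1$ piece.
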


\subsection{Useful Lemmas for local existence.}
 In this subsection, we give some preliminary lemmas which will be used in the proof of local existence.
  \begin{lem}\label{commutator estimate}
  Let $\zeta$ be a compactly support smooth function and $\theta\in \dot{W}^{1,\infty}$, and denote $\zeta_{R}=\zeta(\f{x}{R})$.  
  then
  \beqs
  \|[\zeta_{R}(\cdot),\Theta(D)]f\|_{L^2}\lesssim \f{1}{R}\|f\|_{L^2}
  \eeqs
  \end{lem}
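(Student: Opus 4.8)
The statement is the standard commutator estimate
\[
\|[\zeta_R(\cdot),\Theta(D)]f\|_{L^2}\lesssim \frac{1}{R}\|f\|_{L^2},
\]
and the natural route is to realize $\Theta(D)$ as a convolution operator and then expand $\zeta_R$ in a Taylor-type fashion against the kernel. First I would write $\Theta(D)g = \check{\Theta} * g$ where $\check{\Theta}=\cF^{-1}\Theta$, so that
\[
[\zeta_R,\Theta(D)]f(x)=\int \big(\zeta_R(x)-\zeta_R(y)\big)\check{\Theta}(x-y)\,f(y)\,\mathrm{d}y.
\]
Since $\zeta$ is a compactly supported smooth function, $\zeta_R\in \dot{W}^{1,\infty}$ with $\|\nabla\zeta_R\|_{L^\infty}\le \frac{1}{R}\|\nabla\zeta\|_{L^\infty}$, hence by the mean value inequality $|\zeta_R(x)-\zeta_R(y)|\le \frac{1}{R}\|\nabla\zeta\|_{L^\infty}|x-y|$. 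Substituting this bound, the kernel of the commutator is dominated by $\frac{1}{R}\|\nabla\zeta\|_{L^\infty}\,|x-y|\,|\check{\Theta}(x-y)|$, i.e. the commutator is controlled in absolute value by $\frac{1}{R}\|\nabla\zeta\|_{L^\infty}$ times the convolution operator with kernel $K(z)=|z|\,|\check{\Theta}(z)|$.

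The remaining point is then to check $K\in L^1(\mathbb{R}^2)$, so that Young's inequality gives the $L^2\to L^2$ bound with operator norm $\|K\|_{L^1}$. This is where the hypotheses on $\Theta$ must be used: in the applications of this paper $\Theta(D)$ is one of the zero-order multipliers (Riesz-type, or symbols smooth away from the origin and homogeneous of degree $0$, or the cutoff multipliers $\chi^L(D)$, $\chi^m(D)$ etc.), and more generally one needs $\Theta$ smooth enough and with enough decay of derivatives that $|z|\check{\Theta}(z)$ is integrable near both $z=0$ and $z=\infty$; concretely it suffices that $\Theta$ and its derivatives up to order, say, $3$ are bounded, with $\partial^\gamma\Theta$ decaying appropriately, which yields $|\check{\Theta}(z)|\lesssim (1+|z|)^{-3}$ and hence $K(z)\lesssim |z|(1+|z|)^{-3}\in L^1(\mathbb{R}^2)$. (For a genuine Riesz multiplier, whose symbol is singular only at the origin, one splits $\Theta=\Theta\,\chi_{\le 1}(D)+\Theta\,\chi_{>1}(D)$: the high-frequency piece is Schwartz-tailed as above, and the low-frequency piece is a smooth compactly supported symbol whose inverse Fourier transform is Schwartz, so again $K\in L^1$.) Putting the two steps together,
\[
\|[\zeta_R,\Theta(D)]f\|_{L^2}\le \frac{1}{R}\|\nabla\zeta\|_{L^\infty}\,\|K\|_{L^1}\,\|f\|_{L^2}\lesssim \frac{1}{R}\|f\|_{L^2}.
\]

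\textbf{Main obstacle.} The only subtlety is the integrability of $|z|\check{\Theta}(z)$: the clean mean-value estimate on $\zeta_R$ is immediate, but it must be paired with a quantitative bound on the kernel of $\Theta(D)$, which requires pinning down exactly what regularity/decay is assumed on $\Theta$ (or, in the Riesz-multiplier case, handling the origin singularity by a Littlewood–Paley-type splitting as indicated above). Once the class of admissible $\Theta$ is fixed so that $\check{\Theta}$ has, say, an $O((1+|z|)^{-3})$ bound, the estimate follows from the mean value theorem plus Young's inequality with no further effort.
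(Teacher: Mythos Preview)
Your physical-space approach has a real gap relative to the stated hypothesis. The lemma only assumes $\Theta\in\dot W^{1,\infty}$, i.e.\ $\nabla_\xi\Theta\in L^\infty$, and this does \emph{not} imply that $K(z)=|z|\,|\check\Theta(z)|$ is in $L^1$. Indeed, $\mathcal F(z\check\Theta)(\xi)=i\nabla_\xi\Theta(\xi)$, so the hypothesis says precisely that the Fourier transform of $z\check\Theta$ lies in $L^\infty$, which is far weaker than $z\check\Theta\in L^1$ (take for instance $\Theta(\xi)=\sin\xi_1$: then $\nabla\Theta$ is bounded but $\check\Theta$ is a sum of Dirac masses, and your kernel bound is meaningless). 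Your patch---restricting to the specific multipliers used later in the paper---does not prove the lemma as stated, and even for Riesz-type symbols the argument you sketch is more delicate than necessary.

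The paper's proof is the natural dual of yours and matches the hypothesis exactly: work on the Fourier side, write
\[
\mathcal F\big([\zeta_R,\Theta(D)]f\big)(\xi)=\int\widehat{\zeta_R}(\eta)\hat f(\xi-\eta)\big(\Theta(\xi-\eta)-\Theta(\xi)\big)\,\d\eta,
\]
apply the mean value theorem to $\Theta$ (not to $\zeta_R$) to produce $-\int_0^1\eta\cdot\nabla\Theta(\xi-\tau\eta)\,\d\tau$, and absorb the factor $\eta$ into $\widehat{\zeta_R}$ via $\eta\,\widehat{\zeta_R}(\eta)=-i\,\widehat{\nabla\zeta_R}(\eta)$. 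Then a pointwise bound by $\|\nabla\Theta\|_{L^\infty}\,(|\widehat{\nabla\zeta_R}|\ast|\hat f|)(\xi)$ and Young's inequality give the result, using only that $\|\widehat{\nabla\zeta_R}\|_{L^1}=\tfrac{1}{R}\|\widehat{\nabla\zeta}\|_{L^1}<\infty$ (which follows from $\zeta\in C_c^\infty$). In short: you Taylor-expanded the wrong factor. Expanding $\Theta$ in frequency uses $\nabla\Theta\in L^\infty$ directly and avoids any integrability assumption on $\check\Theta$.
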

  \begin{proof}
  
  \ben\label{identity}
  \cF([\zeta_{R},\Theta(D)]f)&=&\int\widehat{\zeta_{R}}(\eta)\hat{f}(\xi-\eta)
  [\Theta(\xi-\eta)-\Theta(\xi)]\d\eta \nonumber\\
  &=&-\int\widehat{\zeta_{R}}(\eta)\hat{f}(\xi-\eta)\int_{0}^{1}\eta\cdot\na\Theta(\xi-\tau\eta)\d \tau \d\eta\nonumber\\
  &=&i\int \hat{f}(\xi-\eta)\widehat{\na\zeta_{R}}(\eta)\cdot\int_{0}^{1}\na \Theta(\xi-\tau\eta))\d \tau \d\eta.\qquad\quad
  \een
  Therefore, by Parseval's inequality and Young's inequality,
  \beno
  \|[\zeta_{R},\Theta(D)]f\|_{L^2}&\lesssim&\|\cF([\zeta_{R},\Theta(D)]f)\|_{L^{2}}\\
  &\lesim&\|\widehat{\na\zeta_{R}}\|_{L^1}\|\hat{f}\|_{L^2}\|\na_{\xi}\Theta\|_{L^{\infty}}\lesssim\f{1}{R}\|f\|_{L^2}.
  \eeno
  Note that in the above, we have used the fact:
  $\|\widehat{\na\zeta_{R}}\|_{L^1}\lesssim\f{1}{R}.$
  \end{proof}
  \begin{cor}\label{useful for the local existence in weighted norm}
  Denote $\Phi_{j}$ the j-th dyadic function coming from Littlewood-Paley theory, and $\chi^{L}(\xi)=(\chi_1+\chi_2)(\sqrt{\f{\ep}{\kpz}}\xi)$ (See the definitions in Section 2).
  Then, for any $t\in(0,1]$, $2\leq p<\infty$,
  \beq
  \|[\Phi_{j},e^{it\lambda_{-}(D)}\lnr^{2}
  \chi^{L}(D)]f\|_{L^p}\lesssim_{\kpz,\ep,p}2^{-j}\|f\|_{L^2}.
  \eeq
  \end{cor}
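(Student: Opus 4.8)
\textbf{Proof proposal for Corollary \ref{useful for the local existence in weighted norm}.}

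The plan is to reduce the claim to a variant of the commutator estimate of Lemma \ref{commutator estimate}, but with the extra ingredient that the multiplier now depends on $t$ through $e^{it\lambda_{-}(D)}$ and that we want the output in $L^p$ rather than $L^2$. First I would mimic the Fourier-side computation \eqref{identity}: writing $\Theta_t(\xi) \define e^{it\lambda_{-}(\xi)}\lnr^2\chi^L(\xi)$, we have
\beqs
\cF\big([\Phi_j,\Theta_t(D)]f\big)(\xi) = i\int \hat f(\xi-\eta)\,\widehat{\nabla\Phi_j}(\eta)\cdot\int_0^1 \nabla_\xi\Theta_t(\xi-\tau\eta)\,\d\tau\,\d\eta,
\eeqs
so that $[\Phi_j,\Theta_t(D)]f$ is an average over $\eta$ and $\tau$ of translates (by $\tau\eta$) of operators with symbol $\nabla_\xi\Theta_t$ applied to $f$, convolved against $\nabla\Phi_j$. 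The key point is then to bound $\nabla_\xi\Theta_t(D)$ as an operator from $L^2$ to $L^p$, uniformly (for $t\in(0,1]$) up to the stated constant. Since $\chi^L$ localizes to frequencies $\lesssim \sqrt{\kpz/\ep}$, on this region $|\nabla_\xi(e^{it\lambda_{-}(\xi)})| \lesssim (|t|+1)\langle\xi\rangle$ and all higher derivatives are similarly controlled by powers of $\langle\xi\rangle$ times powers of $\langle t\rangle$; combined with the compactly supported (in frequency, at scale $\sqrt{\ep/\kpz}$) cutoff, $\nabla_\xi\Theta_t(D)$ maps $L^2\to L^\infty$ (hence $L^2 \to L^p$ for all $p\ge 2$ after also using the frequency localization and Bernstein) with norm $\lesssim_{\kpz,\ep,p} 1$ for $t\in(0,1]$.

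Next I would put the pieces together: by Minkowski's inequality in the $\eta,\tau$ integrals,
\beqs
\|[\Phi_j,\Theta_t(D)]f\|_{L^p} \lesssim \int \big|\widehat{\nabla\Phi_j}(\eta)\big|\,\sup_{\tau\in[0,1]}\big\|\big(\nabla_\xi\Theta_t\big)(D)f(\cdot)\big\|_{L^p}\,\d\eta \lesssim \|\widehat{\nabla\Phi_j}\|_{L^1}\,\|f\|_{L^2},
\eeqs
where I have used the $L^2\to L^p$ bound for $\nabla_\xi\Theta_t(D)$ and the fact that a modulation/translation in frequency by $\tau\eta$ does not change an $L^p$ norm. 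Since $\Phi_j(x)=\Phi(x/2^j)$ one has $\widehat{\nabla\Phi_j}(\eta) = i\,2^{j}2^{2j}\,(\nabla\Phi)\widehat{\ }(2^j\eta)\cdot(\text{scaling})$, more precisely $\|\widehat{\nabla\Phi_j}\|_{L^1} \lesssim 2^{-j}$ by the same scaling computation used at the end of Lemma \ref{commutator estimate}. This yields the desired $2^{-j}\|f\|_{L^2}$.

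The main obstacle I anticipate is not the commutator structure, which is routine, but the uniformity bookkeeping: one must check that $\nabla_\xi\Theta_t(D): L^2\to L^p$ genuinely has norm controlled only by $\kpz,\ep,p$ and not by $j$ or by anything blowing up as $t\to 0^+$, and that the $\ep$-dependence is harmless here (it is, since in the local-existence step $\ep\in(0,1]$ is fixed). Concretely this means: (i) on $\Supp\chi^L$ we have $\lambda_{-}(\xi) = -\ep|\xi|^2 - ib(\xi)$ with $b$ smooth and $|\nabla b|\lesssim 1$ there, so $|\nabla_\xi\Theta_t(\xi)|\lesssim_{\ep,\kpz}\langle\xi\rangle^{3}$ on the (compact, $\ep$-dependent) frequency support; (ii) by Bernstein/Young on that frequency ball, $\|\nabla_\xi\Theta_t(D)f\|_{L^p}\lesssim_{\ep,\kpz,p}\|f\|_{L^2}$ for $2\le p<\infty$ uniformly in $t\in(0,1]$. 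Once this linear bound is in hand the rest is the scaling estimate $\|\widehat{\nabla\Phi_j}\|_{L^1}\lesssim 2^{-j}$, already recorded in the proof of Lemma \ref{commutator estimate}, and the corollary follows.
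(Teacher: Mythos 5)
Your proposal is correct and follows essentially the paper's own route: both arguments rest on the Fourier-side commutator identity of Lemma \ref{commutator estimate}, the scaling bound $\|\widehat{\na\Phi_{j}}\|_{L^1}\lesssim 2^{-j}$, and the fact that $\na_{\xi}\big(e^{it\lambda_{-}(\xi)}\lxr^{2}\chi^{L}(\xi)\big)$ is bounded (with harmless $\ep,\kpz$-dependent constants) on its compact frequency support for $t\in(0,1]$. The only difference is cosmetic: the paper applies Hausdorff--Young and then H\"older/Minkowski on the Fourier side, while you package the same symbol bound as an $L^2\to L^p$ multiplier estimate (via Bernstein/interpolation on the frequency ball) and then use Minkowski in $(\eta,\tau)$.
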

  \begin{proof}
By virtue of Hausdorff's inequality, identity \eqref{identity} and Young's inequality, 
we have:
  \beno
  \|[\Phi_{j},e^{it\lambda_{-}(D)}]\lnr^{2}\chi^{L}(D)f\|_{L^p}&\lesssim&\|\cF([\Phi_{j},e^{it\lambda_{-}(D)}\lnr ^{2} 
  \chi^{L}(D)]f)\|_{L^{p'}}\\
  &\lesssim&\|\widehat{\na\Phi_{j}}\|_{L^1}\|\hat{f}\|_{L^2}\|\na_{\xi}(e^{it\lambda_{-}(\xi)}\lxr^{2}
  \chi^{L})\|_{L^{\f{2p}{p-2}}}\\
  &\lesssim&2^{-j}(\f{\ep}{\kpz})^{-(\f{1}{2}-\f{1}{p})}(1+\f{\kpz}{\ep})
  \|f\|_{L^2}.
  \eeno
  \end{proof}
  We will need to estimate 'weighted product term'  like $xfg$, the following lemma allows us not to lose derivative on weighted term. 
  \begin{lem}\label{weighted product}
  For $s\geq 0$, $\iota>0$, the following weighted product estimate holds :
  \beno 
  \|xfg\|_{H^{s}}\lesim \min\left\{\|xf\|_{L^2}\|g\|_{H^{s+1+\iota}},\|xf\|_{L^{\infty}}\|g\|_{H^{s+\iota}}\right\}+\|xg\|_{L^2}\|f\|_{H^{s+1+\iota}}+\|f\|_{H^{s+\iota}}\|g\|_{H^{s+\iota}}.
  \eeno
  \end{lem}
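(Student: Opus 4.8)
The plan is to prove the weighted product estimate by splitting $x(fg) = (xf)g$ and $x(fg) = f(xg)$, then distributing the weight onto one factor and using the fractional Leibniz rule (Kato--Ponce) together with Sobolev embeddings to handle the other factor. For the first term $\|(xf)g\|_{H^s}$, I would write $\|(xf)g\|_{H^s}\lesssim \|xf\|_{L^{p_1}}\|g\|_{W^{s,q_1}}+\|xf\|_{W^{s,p_2}}\|g\|_{L^{q_2}}$; the point is that the weight $x$ appears on $f$ in every term, so no extra derivative lands on the (potentially badly behaved) weighted factor. Choosing $p_1=2, q_1=\infty$ gives the $\|xf\|_{L^2}\|g\|_{H^{s+1+\iota}}$ option (using $H^{s+1+\iota}\hookrightarrow W^{s,\infty}$ in $\mathbb{R}^2$, which needs $1+\iota>1$, true since $\iota>0$), while $p_1=\infty, q_1=2$ gives the $\|xf\|_{L^\infty}\|g\|_{H^{s+\iota}}$ option; and in both we must control the crossed piece where derivatives hit $xf$. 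Here is the one subtlety: a term like $\|xf\|_{W^{s,p_2}}\|g\|_{L^{q_2}}$ threatens to need $s$ derivatives on $xf$, which is not among the norms on the right-hand side. One resolves this by instead distributing via the other splitting $f(xg)$ for that piece, or more cleanly by using the paraproduct decomposition $fg = T_f g + T_g f + R(f,g)$ and noting that in the high-frequency-on-$g$ paraproduct $T_f g$ one only needs $f\in L^\infty$ (hence $f\in H^{s+\iota}$ suffices by Sobolev), while in $T_g f + R(f,g)$ the high frequencies sit on $f$, so multiplying by $x$ there and commuting $x$ past the Littlewood--Paley projections (at the cost of a harmless $2^{-j}$, as in Lemma~\ref{commutator estimate}) turns it into $\|xg\|$-type or $\|f\|_{H^{s+\iota}}\|g\|_{H^{s+\iota}}$-type contributions.

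Concretely, I would carry out the steps in this order. First, decompose $fg = T_fg + T_gf + R(f,g)$ à la Bony. Second, estimate $x T_f g$: since $\Supp\widehat{f_{<j-N}g_j}\subset$ an annulus of size $2^j$, write $x(f_{<j}\Delta_j g) = f_{<j}(x\Delta_j g) + [x,\Delta_j]$-type terms where the commutator gains a factor $2^{-j}$; summing in $j$, one bounds this by $\|f\|_{L^\infty}\|xg\|_{?}$ plus lower-order terms --- but actually the cleanest route for $T_f g$ is to not move $x$ inside at all and instead bound $\|x T_f g\|_{H^s}$ directly by the product rule with the weight kept outside, giving $\|f\|_{L^\infty}\|xg\|_{L^2}$-ish and, after re-examination, the term $\|f\|_{H^{s+\iota}}\|g\|_{H^{s+\iota}}$ when $x$ is estimated crudely. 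Third, for $T_g f + R(f,g)$, the high frequency is on $f$; multiply by $x$, commute past projections with the $2^{-j}$ gain, and sum to get $\|xg\|_{L^2}\|f\|_{H^{s+1+\iota}}$ (the extra derivative on $f$ is affordable) plus the diagonal remainder $\|f\|_{H^{s+\iota}}\|g\|_{H^{s+\iota}}$. Fourth, for the $\min\{\cdot,\cdot\}$ in front of $\|xf\|$: redo the whole computation with the roles adjusted so that $x$ stays attached to $f$ throughout --- the high-on-$f$ paraproduct $T_g f$ then carries $\|xf\|_{L^\infty}\|g\|_{H^{s+\iota}}$ (no derivative on $f$ needed) or $\|xf\|_{L^2}\|g\|_{H^{s+1+\iota}}$, while $T_f g$ and $R$ reduce to the already-listed terms.

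The main obstacle I expect is bookkeeping the paraproduct pieces so that the weight $x$ never forces a derivative onto the weighted factor beyond $L^2$ (for $xg$) or $L^\infty$ (for $xf$) --- in other words, ensuring that whenever $s$ derivatives act, they act on the un-weighted factor, and whenever they would act on a weighted factor one has instead exploited the frequency localization of that paraproduct block so that $x$ can be commuted out at the price of an absolutely summable $2^{-j}$ factor (Lemma~\ref{commutator estimate} in $L^2$, or its $L^p$ analogue). The fractional-derivative estimates themselves (Kato--Ponce and $H^{s+\iota}\hookrightarrow W^{s,\infty}$ on $\mathbb{R}^2$) are standard and I would invoke them without proof. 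Once the three paraproduct contributions are each bounded by terms on the right-hand side, summing them and taking the minimum over the two ways of placing the weight on $f$ yields exactly the claimed inequality.
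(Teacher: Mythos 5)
Your toolbox is the right one --- the paper's own proof is exactly a Bony-type decomposition $fg=\sum_{j}S_{j-1}f\,\Delta_j g+\Delta_j f\,S_j g$ with the weight commuted past the Littlewood--Paley projections --- and your Third step (weight moved onto $g$ in the high-on-$f$ block, yielding $\|xg\|_{L^2}\|f\|_{H^{s+1+\iota}}$ plus commutator contributions) coincides with the paper's treatment of $\sum_j \Delta_j f\,S_j g$. But there is a genuine gap in the two steps that are supposed to produce the $\min$-term. In the high-on-$f$ paraproduct $T_g f=\sum_j S_jg\,\Delta_j f$ the $j$-th block of the output is frequency-localized at $2^j$ by $f$, so keeping $x$ attached to $f$ forces the $2^{js}$ weight onto $\Delta_j(xf)$; since the only admissible norms of $xf$ are $\|xf\|_{L^2}$ and $\|xf\|_{L^\infty}$ (no Sobolev regularity of $xf$ appears on the right-hand side), the sum $\sum_j 2^{js}\|S_jg\|_{L^\infty}\|\Delta_j(xf)\|_{L^2}$ --- or any rearrangement using the available norms --- does not converge for $s>0$. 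Hence your Fourth step's claim that $T_gf$ ``carries $\|xf\|_{L^\infty}\|g\|_{H^{s+\iota}}$ with no derivative on $f$ needed'' fails, and it also contradicts your Third step, where you correctly put the weight on $g$ in that very block. Symmetrically, in the Second step (and in the first paragraph) you propose to handle $T_f g$ using only $f\in L^\infty$, letting the weight ride on $\Delta_j g$ or ``stay outside''; that produces $\Delta_j(xg)$ at frequency $2^j$ with only $\|xg\|_{L^2}$ available, so the $2^{js}$-weighted sum again diverges, and ``$\|f\|_{L^\infty}\|xg\|_{L^2}$-ish'' is not an estimate that closes.

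The missing idea is that the weight must always sit on the \emph{low-frequency} factor of each paraproduct block, and the $\min$ comes from Bernstein applied there: in $S_{j-1}(xf)\,\Delta_j g$ one has $\|S_{j-1}(xf)\|_{L^\infty}\lesssim\min\{2^{j}\|xf\|_{L^2},\,\|xf\|_{L^\infty}\}$ (in dimension $2$), so the $L^2$ option costs exactly one extra derivative, which is dumped on the unweighted high-frequency factor $\Delta_j g$, i.e.\ on $\|g\|_{H^{s+1+\iota}}$, while the $L^\infty$ option costs none and gives $\|g\|_{H^{s+\iota}}$; the commutators $[x,S_{j-1}]$ and $[x,\Delta_j]$ strip the weight altogether and yield the unweighted term $\|f\|_{H^{s+\iota}}\|g\|_{H^{s+\iota}}$. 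The same mechanism in the block $\Delta_j f\,S_j(xg)$ gives $\|xg\|_{L^2}\|f\|_{H^{s+1+\iota}}$, which is your Third step. With the weight re-assigned in this way (Kato--Ponce is not needed at all), your argument becomes the paper's proof; as written, however, the steps that are meant to generate the $\min$-term would not close.
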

  \begin{proof}
  Write $fg=\sum_{j\geq 1}S_{j-1}f \Delta_{j}g+\Delta_{j}f S_{j}g $. Thanks to the Bernstein inequality and Young's inequality the first term can be estimated as:
  \beno
  \|x\sum_{j\geq 1}S_{j-1}f \Delta_{j}g\|_{H^s}&=&\sum_{j\geq 1}2^{js}(\|[x,S_{j-1}]f \Delta_{j}g\|_{L^2}+\|S_{j-1}(xf) \Delta_{j}g\|_{L^2})\\
  &\leq& \sum_{j\geq 1}2^{js} (\|f\|_{L^{2}}\|\Delta_{j} g\|_{L^2}+\min\{2^j\|xf\|_{L^2},\|xf\|_{L^{\infty}}\} \|g\|_{L^2})\\
  &\leq&\min\{\|xf\|_{L^2}\|g\|_{H^{s+1+\iota}},\|xf\|_{L^{\infty}}\|g\|_{H^{s+\iota}}\}+\|xf\|_{L^2}\|g\|_{H^{s+\iota}}
  \eeno
  The second term can be controlled similarly, we omit the detail.
  \end{proof}
\section{Local existence and time continuity of weighted norm}
By classical iteration technique, one could construct solution for system \eqref{ANSP1} in $C([0,T_{\ep}];H^N),(N\geq 3)$ for some $T_{\ep}>0$
(please refer to \cite{MR564670} for example), which leads to the local existence for system \eqref{eqU}.
We thus focus  on the local boundedness of weighted norm  $\|xe^{itb(D)}w(t)\|_{W^{4,\f{2}{1-\delta}}}$ and its continuity in time. We start with the weighted $L_x^2$ estimate for high frequency which shall be useful later.
\begin{lem}
There exists a constant $M_0>0$, such that for small but fixed time $T_0<1$,   the following a-priori estimate holds
 \beq\label{weighted high local}
 \sup_{t\in [0,T_0]}\|x(u^h,\vr^h)\|_{L^2}\lesim M_0e^{M_0T_0}T_0(1+\|x(u^L,\vr^L)\|_{L^{\infty}([0,T_0],L_{x}^{\infty})}).
 \eeq
\end{lem}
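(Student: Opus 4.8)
The plan is to run a parabolic energy estimate directly on the weighted profile $xU^{h}$, where $U=(\mathrm{a},\mathrm{c})$ is the symmetrized unknown of \eqref{eqU} and $U^{h}=\chi^{h}(D)U$. Since $\vr=\frac{|\na|}{\lnr}\mathrm{a}$ and $u=\R\mathrm{c}$ are bounded Fourier multipliers of $U$ whose symbols are smooth and bounded on $\Supp\chi^{h}$ (which stays away from the origin), controlling $\|x(u^{h},\vr^{h})\|_{L^{2}}$ is equivalent, up to commutators of $x$ with zero-order multipliers (which cost $\lesim M_{0}$), to controlling $\|xU^{h}\|_{L^{2}}$. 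Applying $\chi^{h}(D)$ to \eqref{eqU} and commuting the multiplication by $x$ through $A$ gives
\beqs
\pt (xU^{h})+A(xU^{h})=-[x,A]U^{h}+x\chi^{h}(D)F(U),
\eeqs
and testing against $xU^{h}$ in $L^{2}$: the skew-symmetric part of $A$ (the off-diagonal $\pm\lnr$) contributes nothing, the diffusive part contributes the good sign $-2\ep\|\na(x\mathrm{c}^{h})\|_{L^{2}}^{2}\le 0$, so after Cauchy--Schwarz
\beqs
\f{\d}{\d t}\|xU^{h}\|_{L^{2}}\lesim \|[x,A]U^{h}\|_{L^{2}}+\|x\chi^{h}(D)F(U)\|_{L^{2}}.
\eeqs

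The commutator term is harmless: $[x_{j},A]$ has symbol $i\p_{\xi_{j}}\hat A(\xi)$, whose only unbounded entry is the $4\ep\xi_{j}$ from $-2\ep\Delta$; since $\ep\le1$, the solution lies in $H^{N}$ with $N\ge3$, and $\chi^{h}$ vanishes near the origin, this gives $\|[x,A]U^{h}\|_{L^{2}}\lesim\|U^{h}\|_{H^{1}}\lesim M_{0}$. For the nonlinear term, recall from \eqref{eqU} that, using $u=\R\mathrm{c}$ and $\vr=\frac{|\na|}{\lnr}\mathrm{a}$, $F_{1}=\lnr\R\cdot(\vr u)$ and $F_{2}=-\frac12|\na||u|^{2}$ — a $\lnr$-order bilinear operator in $(\vr,u)$ with zero-order-multiplier factors. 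Moving $x$ successively through $\chi^{h}(D)$ and through this outer $\lnr$-order operator produces, modulo commutators that are bounded on $L^{2}$ (using $\ep\le1$ and that the inner edge of $\Supp\chi^{h}$ sits at frequency $\sim\sqrt{\kpz/\ep}$, where $\lnr$ is bounded) and hence contribute $\lesim\|(\vr,u)\|_{H^{2}}^{2}\lesim M_{0}^{2}$, terms of the schematic form $P(D)\,\chi^{h}(D)\,x(fg)$ with $P$ of order $\le1$ and $f,g$ components of $(\vr,u)$, which we estimate in $H^{1}$ by the weighted product rule of Lemma \ref{weighted product}.

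The structural input is that $\chi^{h}(D)$ annihilates the lowest $\times$ lowest interaction ($\chi^{l}(\xi-\eta)\chi^{l}(\eta)\chi^{h}(\xi)\equiv0$ by the support conditions of Section 2), while the surviving interactions of low-frequency inputs (those carrying a $\chi^{m}$-factor) are supported in a compact frequency annulus; so in every surviving product at least one factor sits at intermediate or higher frequency. In $\|x(fg)\|_{H^{1}}$ we place the $L^{2}$ weight on such a factor and the $L^{\infty}$ weight on the remaining lowest-frequency factor: if the $L^{2}$-weighted factor is a $\chi^{h}$-piece, the resulting $\|xU^{h}\|_{L^{2}}$ appears with coefficient $\lesim\|U\|_{H^{N}}\lesim M_{0}$ and is absorbed by Gr\"onwall; if it is a $\chi^{m}$-piece, commuting $x$ through the compactly supported $\chi^{m}$ reduces it, up to $M_{0}$, to a (slightly fattened) low-frequency weighted $L^{2}$ norm, the type of quantity propagated alongside $\|xe^{itb(D)}w\|_{W^{4,\f{2}{1-\delta}}}$ in the weighted local existence; the remaining lowest-frequency factor contributes exactly $\|x(u^{L},\vr^{L})\|_{L^{\infty}}$; and all unweighted factors cost $\lesim\|(\vr,u)\|_{H^{N}}\lesim M_{0}$. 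Collecting, $\f{\d}{\d t}\|xU^{h}\|_{L^{2}}\lesim M_{0}\|xU^{h}\|_{L^{2}}+M_{0}(1+\|x(u^{L},\vr^{L})\|_{L^{\infty}})$; Gr\"onwall on $[0,T_{0}]$, with $\|xU_{0}^{h}\|_{L^{2}}\lesim\|(\rho_{0}-1,u_{0},\na\varphi_{0})\|_{Y^{\si}}<\infty$ absorbed into $M_{0}$, gives the claim.

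I expect the only real work to be the bookkeeping in the nonlinear term: multiplication by $x$ destroys frequency localization, so one must commute $x$ past every Littlewood--Paley / low--high cutoff (this is where the observation that the surviving $\chi^{h}$-interactions of low inputs land in compact frequency annuli is needed, so the extra commutators stay bounded on $L^{2}$), and one must keep careful track of which factor carries the spatial weight, so that the $L^{\infty}$-weight always lands on the lowest block while the $L^{2}$-weight lands on a block that is either closed by Gr\"onwall or covered by the simultaneous weighted low-frequency estimate.
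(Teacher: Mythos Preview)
Your energy scheme on $xU^{h}$ is reasonable, but the nonlinear step has a real gap. After commuting $x$ through $\chi^{h}\lnr\R$ you are left with $\chi^{h}\lnr\R\big(x(\vr u)\big)$, and since $\lnr\R$ is genuinely order~$1$ on $\Supp\chi^{h}$ (which is unbounded above), this forces an $H^{1}$ bound on the weighted product. Lemma~\ref{weighted product} then always leaves one factor carrying a weighted $L^{2}$ norm: for the $l\times m$ and $m\times m$ pieces (which do survive $\chi^{h}$) this is $\|x u^{m}\|_{L^{2}}$ or $\|x\vr^{l}\|_{L^{2}}$. Your claim that such a quantity ``reduces to a low-frequency weighted $L^{2}$ norm propagated alongside $\|xe^{itb(D)}w\|_{W^{4,2/(1-\delta)}}$'' does not hold: the propagated weighted norm lives in $L^{2/(1-\delta)}$ (and the claim~\eqref{claim} only gives $L^{4}$ decay $\|\Phi_{j}(\cdot)\lnr U^{L}\|_{L^{4}}\lesim 2^{-j}$), neither of which controls $\|xU^{L}\|_{L^{2}}$ in $\mathbb{R}^{2}$; and $\|x(u^{L},\vr^{L})\|_{L^{2}}$ is certainly not bounded by the $L^{\infty}$ quantity on the lemma's right-hand side. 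So the estimate does not close as written.

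The paper avoids this entirely by staying in the physical variables $(\vr,u)$ and expanding the derivative \emph{before} attaching the weight: $\psi_{R}\,\div(\vr u)=\na\vr\cdot(\psi_{R}u)+(\psi_{R}\vr)\,\div u$ puts $\psi_{R}$ on the undifferentiated factor, so after splitting that factor into $L$ and $h$ parts one only sees $\|\psi_{R}u^{h}\|_{L^{2}}$ (Gr\"onwall) and $\|\psi_{R}u^{L}\|_{L^{\infty}}$ (the RHS); no $H^{1}$ estimate of a weighted product is ever needed, and the $l/m/h$ trichotomy plays no role here. Your scheme can be repaired the same way (write $\lnr\R\cdot=\tfrac{\lnr}{|\na|}\div$ and note $\chi^{h}\tfrac{\lnr}{|\na|}$ is order~$0$), but that is precisely the paper's argument. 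Two smaller points: the aside ``where $\lnr$ is bounded'' at the inner edge of $\Supp\chi^{h}$ is false ($\lnr\sim\sqrt{\kpz/\ep}$ there; the commutator is bounded because $(\chi^{h})'\sim\sqrt{\ep/\kpz}$ compensates), and you are missing the spatial truncation $\psi_{R}(x)=x\Psi(x/R)$ that the paper uses to justify the energy identity before $xU^{h}\in L^{2}$ is known.
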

\begin{proof}
We consider the system satisfied by the high frequency:
\beq \label{NSPhighfrequency}
 \left\{
\begin{array}{l}
\displaystyle \pt \vr^{h} +\div u^{h}+\div( \vr u)^{h}=0,\\
\displaystyle \pt u^{h}+(u \cdot {\na u})^{h}-2\varepsilon \Delta u^{h}+
\nabla \vr^{h}-\nabla \varphi^{h}=0 ,  \\
\displaystyle \Delta \varphi^{h} =\vr^{h},\\
\displaystyle u|_{t=0} =\mathcal{P}^{\perp}u_0^{\varepsilon} ,\vr|_{t=0}=\vr_0^{\varepsilon}.
\end{array}
\right.
\eeq
 Set
$\psi_{R}(x)=x \Psi(\f{x}{R})$, where 
the compactly supported function $\Psi\equiv 1$ on $B_{\f{3}{2}}$ and vanish on $B_{\f{5}{3}}^{c}$.
Multiplying the system \eqref{NSPhighfrequency} by $\psi_{R}(x)$, and test $\psi_{R}(\vr^h,u^h)$, one gets the energy equality:
\beno
&&\f{1}{2}\f{\d}{\d t}\|\psi_{R}(\vr^h,u^h)\|_{L^2}^{2}\\
&=&-\int\psi_{R}^{2}(\vr^{h}\div u^{h}-\na \vr^{h} u^{h})\d x+\int \psi_{R}(\na\varphi)^{h} \psi_{R} u^{h}\d x +2\ep \int\psi_{R}\Delta u^{h}\psi_{R}u^{h} \d x\\
&&-\int\psi_{R}\div(\vr u)^{h}\psi_{R}\vr^{h}\d x-\int \psi_{R}(u\cdot\na u)^{h}\psi_{R} u^{h}\d x\\
&\define&\mathcal{T}_1+\mathcal{T}_2+\cdots \mathcal{T}_5.
\eeno
We now estimate $\mathcal{T}_1,\cdots \mathcal{T}_5.$
For $\mathcal{T}_1$,  integration by parts and  H\"{o}der inequality yield:
\beqs
\mathcal{T}_1=2\int \psi_{R}\vr^{h}\na \psi_{R}u^{h}\d x\leq \|\psi_{R}\vr^{h}\|_{L^2}\|\na \psi_{R}u^{h}\|_{L^2}\lesim \|\psi_{R}\vr^{h}\|_{L^2}\|u^{h}\|_{L^2}.
\eeqs
Note that $\na\psi_{R}$ is pointwise bounded uniformly in $R$.
For $\mathcal{T}_2$, by H\"{o}der inequality,
\beqs
\mathcal{T}_2\lesssim \|\psi_{R}u^h\|_{L^2} \|\psi_{R}\na \varphi^h\|_{L^2}\lesim \|\psi_{R}u^h\|_{L^2}(\|\psi_{R}\vr^{h}\|_{L^2}+\|\vr^{h}\|_{L^2})
\eeqs
where we have used $$\psi_{R}\na\varphi^{h}=\psi_{R}\na(\Delta)^{-1}\tilde{\chi}^{h}\vr^{h}=[\psi_{R},\na (\Delta)^{-1}\tilde{\chi}^{h}]\vr^{h}+\na (\Delta)^{-1}\tilde{\chi}^{h}(D) (\psi_{R}\vr^{h}).$$
Notice that by the Lemma \ref{commutator estimate} and the spectral localization of $\tilde{\chi}^{h}$, one has that:
\beno
\|[\psi_{R},\na (\Delta)^{-1}\tilde{\chi}^{h}]\vr^{h}\|_{L^2}&\lesssim& \|\cF^{-1}(\na\psi_R)\|_{L^1}\|\partial_{\xi}(\f{\xi}{|\xi|^{2}}\tilde{\chi}^{h})\|_{L_{\xi}^{\infty}}\|\vr^{h}\|_{L^2}\lesim \f{\ep}{\kpz}\|\vr^{h}\|_{L^2},\\
\|\na (\Delta)^{-1}\tilde{\chi}^{h}(D) (\psi_{R}\vr^{h})\|_{L^2}&\lesssim& \sqrt{\f{\ep}{\kpz}}\|\psi_{R}\vr^{h}\|_{L^2}.
\eeno
For $K_3$, it is direct to see
\beno
\mathcal{T}_3+2\ep \int |\psi_{R}\na u^{h}|^{2}\d x&=&-4\ep\int \psi_{R}u^{h}\na u^{h}\na\psi_{R}\d x\\
&\lesim& \|\psi_{R}u^{h}\|_{L^2}\|\na u^h\|_{L^2}.
\eeno
For $\mathcal{T}_4$, using again $[\psi_{R},\chi^h(D)]$ belongs to  $\mathcal{L}(L^{2}(\mathbb{R}^2))$
whose norm is independent of $R$, we get:
\beno
\mathcal{T}_4&=&-\int [\psi_{R},\chi^h(D)](u\cdot \na u)\psi_{R}u^h +\chi^{h}(D)(\psi_{R}u\cdot\na u)\psi_{R}u^h\d x\\
&\lesim&\|\psi_{R}u^h\|_{L^2}\big(\|u\cdot \na u\|_{L^2}+\|\psi_{R}u^h\|_{L^2}\|\na u\|_{L^{\infty}}+\|\psi_{R}u^L\|_{L^{\infty}}\|\na u\|_{L^2}\big).
\eeno
Similarly, for $\mathcal{T}_5$, we have:
\beno
\mathcal{T}_5&=&-\int[\psi_{R},\chi^h(D)]\div(\vr u)\psi_{R}\vr^{h}+\chi^{h}(\na\vr\cdot\psi_{R}u+\psi_{R}\vr \div u)\psi_{R}\vr^{h}\d x\\
&\lesim& \|\psi_{R}\vr^{h}\|_{L^2}\big(\|\div(\vr u)\|_{L^2}+\|\psi_{R}(u^h,\vr^h)\|_{L^2}\|(u,\vr)\|_{W^{1,\infty}}+\|\psi_{R}(u^L,\vr^L)\|_{L^{\infty}}\|(\vr,u)\|_{H^1}\big).
\eeno
 Summing up the above estimates, we finally get:
 \beno
 \pt \|\psi_{R}(u^h,\vr^h)\|_{L^2}&\lesim& \|\psi_{R}(u^h,\vr^h)\|_{L^2}(1+\|(u,\vr)\|_{W^{1,\infty}})\nonumber\\
 &&+(1+\|(u,\vr)\|_{L^{\infty}})\|(u,\vr)\|_{H^1}+\|\psi_{R}(u^L,\vr^L)\|_{L^{\infty}}\|(\vr,u)\|_{H^1}. \qquad\qquad
\eeno
 Gr\"{o}nwall's inequality then gives us for any $t<T_0$, there exists constant $M_0$ which depends on $\|(u,\vr)\|_{L^{\infty}([0,T_0], H^{1}\cap W^{1,\infty})}$,
 \beqs
 \|\psi_{R}(u^h,\vr^h)(t)\|_{L^2}\lesim M_0e^{M_0T_0}T_0(1+\|\psi_{R}(u^L,\vr^L)\|_{L^{\infty}([0,T_0],L_{x}^{\infty})}).
 \eeqs
Letting $R$ tends to $+\infty$, we obtain \eqref{weighted high local}
\end{proof}.

 One can easily adapt the above proof to derive the following Corollary:
\begin{cor}\label{weighted high by weighted low}
There exists another constant $M_0'$ which is dependent on 
$\|(u,\vr)\|_{L^{\infty}([0,T_0], H^{1}\cap W^{1,\infty}\cap W^{1,4})}$, such that for any $j\geq 0$,
 \beqs
 \sup_{t\in [0,T_0]}\|\Phi_{j}(\cdot)(u^h,\vr^h)(\cdot)\|_{L^2}\lesim M_0'e^{M_0'T_0}T_0(2^{-j}+\|\Phi_{j}(u^L,\vr^L)\|_{L^{\infty}([0,T_0],L_{x}^{4})}).
 \eeqs
where $\Phi_{j}$ is the dyadic function defined in \eqref{jthdyadic}.
\end{cor}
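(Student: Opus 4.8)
The plan is to adapt the proof of the preceding lemma line by line, replacing the smooth weight $\psi_R(x)=x\Psi(x/R)$ by the dyadic bump $\Phi_j(x)=\Phi(x/2^j)$ of \eqref{jthdyadic}. Concretely, I would multiply the high-frequency system \eqref{NSPhighfrequency} by $\Phi_j(x)$, pair the result with $\Phi_j(\vr^h,u^h)$, and obtain the energy identity
\beqs
\f12\f{\d}{\d t}\|\Phi_j(\vr^h,u^h)\|_{L^2}^2=\mathcal{T}_1+\mathcal{T}_2+\cdots+\mathcal{T}_5,
\eeqs
where $\mathcal{T}_1,\dots,\mathcal{T}_5$ are the exact analogues of the five terms there (transport/pressure of $\vr^h$, the electric-field term, the viscous term, and the two nonlinear pieces coming from $\chi^h\div(\vr u)$ and $\chi^h(u\cdot\na u)$). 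Two things change compared with the lemma. First, since $\Phi$ is smooth and compactly supported we have $\|\na\Phi_j\|_{L^\infty}\lesssim 2^{-j}$, and Lemma \ref{commutator estimate} applied with $\zeta=\Phi$, $R=2^j$ and $\Theta\in\{\chi^h,\ \na\Delta^{-1}\tilde\chi^h\}$ gives $\|[\Phi_j,\Theta(D)]f\|_{L^2}\lesssim 2^{-j}\|f\|_{L^2}$; thus every commutator contribution gains a clean factor $2^{-j}$ in place of the $O(1)$ bound in $R$ the lemma had. Second, in $\mathcal{T}_4,\mathcal{T}_5$ I would split $\Phi_j u=\Phi_j u^L+\Phi_j u^h$ (likewise for $\vr$) and estimate the low-frequency weighted factor by H\"older in $L^4\times L^4$ rather than $L^\infty\times L^2$, which turns the lemma's $\|\psi_R(u^L,\vr^L)\|_{L^\infty}\,\|(u,\vr)\|_{H^1}$ into $\|\Phi_j(u^L,\vr^L)\|_{L^4}\,\|(u,\vr)\|_{W^{1,4}}$; this is precisely why $M_0'$ is allowed to depend on $\|(u,\vr)\|_{L^\infty_t W^{1,4}}$ and why $\|\Phi_j(u^L,\vr^L)\|_{L^\infty_tL^4_x}$ shows up on the right.

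Running through the terms, $\mathcal{T}_1$ and the boundary piece of the viscous term $\mathcal{T}_3$ are bounded by $2^{-j}\|\Phi_j(u^h,\vr^h)\|_{L^2}\|(u,\vr)\|_{H^1}$ after discarding the favourable contribution $-2\ep\|\Phi_j\na u^h\|_{L^2}^2$; for $\mathcal{T}_2$ one writes $\Phi_j\na\varphi^h=[\Phi_j,\na\Delta^{-1}\tilde\chi^h]\vr^h+\na\Delta^{-1}\tilde\chi^h(\Phi_j\vr^h)$ and gets $\mathcal{T}_2\lesssim\|\Phi_j u^h\|_{L^2}\big(2^{-j}\|\vr^h\|_{L^2}+\|\Phi_j\vr^h\|_{L^2}\big)$; and for $\mathcal{T}_4,\mathcal{T}_5$ one uses $\chi^h(D)(\Phi_j\,\cdot)=[\Phi_j,\chi^h(D)](\cdot)+\chi^h(D)(\Phi_j\,\cdot)$, applies Lemma \ref{commutator estimate} to the commutator and the $L^4\times L^4$ splitting above to the remaining product, so that each contribution carries at least one of the factors $2^{-j}$, $\|\Phi_j(u^h,\vr^h)\|_{L^2}$ or $\|\Phi_j(u^L,\vr^L)\|_{L^4}$, every remaining nonlinear norm of $(u,\vr)$ being swallowed into a constant $M_0'=M_0'\big(\|(u,\vr)\|_{L^\infty_t(H^1\cap W^{1,\infty}\cap W^{1,4})}\big)$. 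Summing the five estimates yields, on $[0,T_0]$,
\beqs
\pt\|\Phi_j(u^h,\vr^h)\|_{L^2}\lesssim M_0'\|\Phi_j(u^h,\vr^h)\|_{L^2}+M_0'\big(2^{-j}+\|\Phi_j(u^L,\vr^L)\|_{L^\infty([0,T_0],L_x^4)}\big).
\eeqs

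Finally, Gr\"onwall's inequality on $[0,T_0]$ (with $T_0<1$) gives the claimed bound with prefactor $M_0'e^{M_0'T_0}T_0$; the contribution of the weighted data at $t=0$ is handled exactly as in the preceding lemma, using in addition the pointwise bound $|\Phi_j(x)|\lesssim 2^{-j}|x|$, which gives $\|\Phi_j(u^h,\vr^h)(0)\|_{L^2}\lesssim 2^{-j}\|x(u_0^h,\vr_0^h)\|_{L^2}$. I do not anticipate a genuine obstacle: the argument is bookkeeping, and the only point needing a little care is to make the $L^4\times L^4$ versus $L^2\times L^\infty$ choice in $\mathcal{T}_4,\mathcal{T}_5$ consistently, so that (a) no norm of $(u,\vr)$ escapes into $M_0'$ without being multiplied either by $2^{-j}$ or by $\|\Phi_j(u^L,\vr^L)\|_{L^4}$, and (b) no derivative is lost on the weighted factor — the commutator device of Lemma \ref{commutator estimate} being exactly what prevents the latter, as in the lemma above.
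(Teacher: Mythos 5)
Your proposal is correct and is exactly the adaptation the paper intends (the paper offers no separate proof, saying only that one adapts the preceding lemma): replace $\psi_R$ by $\Phi_j$, gain the factor $2^{-j}$ from $\na\Phi_j$ and from Lemma \ref{commutator estimate} with $R=2^j$, and estimate the weighted low-frequency factors in $\mathcal{T}_4,\mathcal{T}_5$ by $L^4\times L^4$ H\"older, which is precisely why $M_0'$ depends on $\|(u,\vr)\|_{L^\infty_t W^{1,4}}$. The only cosmetic point, shared with the paper's own statement of the preceding lemma, is that the initial-data contribution $\|\Phi_j(u_0^h,\vr_0^h)\|_{L^2}\lesssim 2^{-j}\|x(u_0^h,\vr_0^h)\|_{L^2}$ is not literally carried by the $T_0$ prefactor, which is harmless for the way the corollary is later used.
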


\subsection{Weighted norm for low frequency: $x(u^L,\vr^L)$
}


 By the Duhamel formula,
 \beq\label{formulae of w}
 w=e^{t\lambda_{-}(D)}w_0+\int_{0}^{t}e^{(t-s)\lambda_{-}(D)}\chi^{L}(D)\big[\f{\lambda_{+}(D)}{2ib}\lnr \f{\div}{|\na|}(\vr u)+\f{\lnr}{2ib}|\na||u|^2\big]\d s.
 \eeq
 Since
  $\vr^L=\f{|\na|}{\lnr}(w+\bar{w})$,
 $u^L=-\R\big(\f{\lambda_{-}(D)}{\lnr}w+\overline{\f{\lambda_{-}(D)}{\lnr}w}\big),$ we define the linear and  nonlinear flow for $\vr^L,u^L$:
\beq\label{linear flow}
\mathcal{L}_{u^L}=\R \f{\lambda_{-}(D)}{\lnr}e^{it\lambda_{-}(D)}w_0\qquad \mathcal{L}_{\vr^L}=\f{|\na|}{\lnr} e^{it\lambda_{-}(D)}w_0
\eeq
\beno
\mathcal{N}_{u^L}&=&\int_{0}^{t}e^{i(t-s)\lambda_{-}(D)}\chi^{L}(D)\big[\R \f{\lambda_{-}\lambda_{+}(D)}{2ib} \f{\div}{|\na|}(\vr u)+\f{\lambda_{-}(D)}{2ib}\na|u|^2\big]\d s\\
&=&\int_{0}^{t}e^{i(t-s)\lambda_{-}(D)}\chi^{L}(D)\big[\R \f{\lnr^2}{2ib} \f{\div}{|\na|}(\vr u)+\f{\lambda_{-}(D)}{2ib}\na|u|^2\big]\d s\\
\mathcal{N}_{\vr^L}&=&\int_{0}^{t}e^{i(t-s)\lambda_{-}(D)}\chi^{L}(D)\big[\f{\lambda_{+}(D)}{2ib} \div(\vr u)-\f{\div}{2ib}\na|u|^2\big]\d s
\eeno
Inspired by \cite{MR3274788}, we need to prove the following claim:\\
\textbf{Claim}: For $j\geq 0$, one has:
\beq\label{claim}
\sup_{t\in[0,T_0]}\|\Phi_{j}(\cdot)(\lnr u^L, \lnr\vr^L)(\cdot)\|_{L_{x}^4}\lesim 2^{-j}
\eeq
We will postpone the proof of \eqref{claim} and first show the local boundedness of $\|xe^{itb(D)}w\|_{W^{4,\f{2}{1-\delta}}}$ and its continuity in time.
By \eqref{claim}, we have for any $j\in\mathbb{N}$,
 \beqs
 \sup_{t\in[0,T_0]}\|\Phi_j (u^L,\vr^L)\|_{W^{1,4}}\lesim 2^{-j}.
 \eeqs
which leads to, by Sobolev embedding, 
\beqs
\sup_{t\in[0,T_0]}\|x(u^L,\vr^L)\|_{L_{x}^{\infty}}\lesim 1.
\eeqs
Note that by \eqref{weighted high local}, we have also $\sup_{t\in[0,T_0]}\|x(u^h,\vr^h)(t)\|_{L_{x}^2}<+\infty.$

$\bullet$ Local boundedness of $\|xe^{itb(D)}w\|_{W^{4,\f{2}{1-\delta}}}$.
The boundedness of the linear term $xw_0$ stems from the assumption imposed upon the initial data. We thus focus on the boundedness of the nonlinear term. In light of \eqref{formulae of w}, it suffices for us to consider the typical term :
$$x\int_{0}^{t}e^{isb(D)}e^{\ep(t-s)\Delta}\lnr \R \chi^{L}(\vr u)\d s.$$
Nevertheless, since for arbitrary function $g$,
\ben\label{reduction}
&&\|x \R g\|_{W^{4,\f{2}{1-\delta}}}
\lesssim \||\na|^{-1}g\|_{W^{4,\f{2}{1-\delta}}}+\|xg\|_{W^{4,\f{2}{1-\delta}}}\lesssim \|g\|_{H^{4+\delta}}+\|xg\|_{H^{4+\delta}},
\een
it remains for us to control
$\|x\int_{0}^{t}e^{isb(D)}e^{\ep(t-s)\Delta}\lnr \chi^{L}(\vr u)\d s\|_{H^{4+\delta}}.$

To begin with, we write:
\beno
&&x\int_{0}^{t}e^{isb(D)}e^{\ep(t-s)\Delta}\lnr \chi^{L}(\vr u)\d s\\&=&
\int_{0}^{t}e^{isb(D)}e^{\ep(t-s)\Delta}(sb'(D)+2\ep(t-s)\na)\chi^{L}\lnr(\vr u)\d s\\
&&+\int_{0}^{t}e^{isb(D)}e^{\ep(t-s)\Delta}(\chi^{L}\langle\cdot\rangle)'(D)(\vr u)\d s+\int_{0}^{t}e^{isb(D)}e^{\ep(t-s)\Delta}\chi^{L}\lnr(x\vr u)\d s\\
&\define& \uppercase\expandafter{\romannumeral1}+\uppercase\expandafter{\romannumeral2}+\uppercase\expandafter{\romannumeral3}.
\eeno
On one hand, the first two terms can be controlled directly by:
\ben\label{local bound 1}
\|\uppercase\expandafter{\romannumeral1}+\uppercase\expandafter{\romannumeral2}\|_{H^{4+\delta}}\lesim \int_{0}^{t} \|\vr u\|_{H^{5+\delta}}\d s\lesim T_0 \sup_{t\in[0,T_0]}\|\vr\|_{H^{5+\delta}}\|u\|_{H^{5+\delta}}.
\een
On the other hand, by Lemma \ref{weighted product}:
\ben\label{local bound 2}
\|\uppercase\expandafter{\romannumeral3}\|_{H^{4+\delta}}&\lesim&\int_{0}^{t} \|x\vr u\|_{H^{5+\delta}}\d s\nonumber\\
&\lesim& T_0 \sup_{t\in[0,T_0]}(\|x(u^L,\vr^L)\|_{L^{\infty}}\|(u,\vr)\|_{H^{5+2\delta}}\\
&& \qquad +\|x(u^h,\vr^h)\|_{L^2}\|(u,\vr)\|_{H^{6+2\delta}}+\|(u,\vr)\|_{H^{5+2\delta}}^{2}).\nonumber
\een

$\bullet$ Continuity in time of the weighted norm.
By \eqref{reduction} again, it suffices for us to prove that:
$\|xe^{\ep t\Delta}\int_{0}^{t}e^{isb(D)}e^{-\ep s\Delta}\chi^{L}\lnr(\vr u)\d s\|_{H^{4+\delta}}$ is continuous in time. 

Denote $z(t)=\int_{0}^{t}e^{isb(D)}e^{-\ep s\Delta}\chi^{L}\lnr(\vr u)(s)\d s$.
Notice that $xe^{\ep t\Delta}z(t)=\ep t\na e^{\ep t\Delta}z(t)+e^{\ep t\Delta}(xz(t)) .$ Since $e^{\ep t\Delta}$ is a continuous operator in $H^{4+\delta}$, we reduce the problem to the continuity of
$\|x z(t)\|_{H^{4+\delta}},$ which is the consequence of the following: 
$$\sup_{t\in[0,T_0]}\|xe^{itb(D)}e^{-\ep s\Delta}\chi^{L}\lnr(\vr u)(t)\|_{H^{2+\delta}}< +\infty.$$
However, it has essentially been included in the proof of \eqref{local bound 1},\eqref{local bound 2}. One notes here that $e^{-\ep s\Delta}\chi^{L}$ is a $L^2$ multiplier whose norm is less than $e^{\kpz t}\leq e^{\kpz T_0}$.

\subsection{Proof of the claim 
}
We are now in position to prove \eqref{claim}.

$\bullet$ Linear flow estimate.
In light of \eqref{linear flow} and the
 crude approximation:
$$w_0=\f{\lambda_{+}}{2ib(D)}\f{\lnr}{|\na|}\vr_{0}^{L}+\f{\lnr}{2ib}\R\cdot u_{0}^L=\R(\f{\lnr}{|\na|}\vr_{0}^{L}+u_{0}^L)=\R (\lnr \R\cdot\na\varphi_0^{L}+u_{0}^L),$$ it suffices for us to show that for $\forall j\geq 0$,
\beq\label{linear flow local }
\|\Phi_{j}\R n_{1}(D)e^{it\lambda_{-}(D)}g\|_{L^4}\lesim 2^{-j}\|\langle x\rangle  g\|_{H^{\f{3}{2}}}
\eeq 
where we denote $n_1(D)=\lambda_{-}(D)$ or $|\na|$, $g=u_{0}^L$ or $\lnr \na \varphi^L$.
Nevertheless, we have by Sobolev embedding, Hausdorff-Young inequality 
that,
\beno
&&\|x\R n_1(D)e^{it\lambda_{-}(D)}\tilde{\chi}^{L}(D)g\|_{L^4}\\
&\lesim&\|n_1(D)e^{it\lambda_{-}(D)}\tilde{\chi}^{L}(D)g\|_{L^2}+\|xn_1(D)e^{it\lambda_{-}(D)}\tilde{\chi}^{L}(D)g\|_{H^{\f{1}{2}}}
\\
&\lesim&\|g\|_{H^{1}}+\|n_1(D)e^{it\lambda_{-}(D)}\tilde{\chi}^{L}(D)xg\|_{H^{\f{1}{2}}}
+\|[x,n_1(D)e^{it\lambda_{-}(D)}\tilde{\chi}^{L}(D)]g\|_{H^{\f{1}{2}}}
\\
&\lesim& \|g\|_{H^{1}}+\|xg\|_{H^{\f{3}{2}}}+(1+t)\|g\|_{H^{\f{1}{2}}}\lesim \ltr\|\langle x\rangle g\|_{H^{\f{3}{2}}}.
\eeno


$\bullet$ Nonlinear flow estimate.

For notational brevity, we replace again $\chi^{L}\f{\lambda_{\pm}}{2ib},\chi^{L}\f{\lnr}{2ib},\chi^{L}\f{\lnr}{2ib},\f{|\na|}{\lnr}$ by $\R$ since they have the similar properties.
Therefore, it remains for us to show: for any $j\in \mathbb{Z}$
\beq\label{nlflow}
\|\Phi_j(x) \int_{0}^{t}e^{i(t-s)\lambda_{-}(D)}\chi^{L}(D)\R \lnr^{2}(\vr u+|u|^2)\d s\|_{L^4}\lesim _{\ep,\kpz,T_0}2^{-j}
\eeq
By Corollary \ref{useful for the local existence in weighted norm}:
\ben\label{nonlinear ineq1}
&&\|\Phi_j(x) \int_{0}^{t}e^{i(t-s)\lambda_{-}(D)}\chi^{L}(D)\R \lnr^{2}(\vr u+|u|^2)\d s\|_{L^4}\nonumber\\
&\lesim _{\ep,\kpz,T_0}&2^{-j}+\| \int_{0}^{t}e^{i(t-s)\lambda_{-}(D)}\lnr^{2}\chi^{L}(D) \big(\Phi_j(x)\R 
(\vr u+|u|^2)\big)\d s\|_{L^4}\nonumber\\
&\lesim _{\ep,\kpz,T_0}&2^{-j}+\| \int_{0}^{t}e^{i(t-s)\lambda_{-}(D)}\lnr^{\f{5}{2}}\chi^{L}(D) \big(\Phi_j(x)\R 
(\vr u+|u|^2)\big)\d s\|_{L^2}\nonumber\\
&\lesim _{\ep,\kpz,T_0}&2^{-j}+T_0(1+\f{\kpz}{\ep})^{\f{5}{4}}\sup_{t\in[0,T_0]}\|\Phi_j(x)\R 
(\vr u+|u|^2)(t)\|_{L^2}
\een
To prove \eqref{nlflow}, we are left to establish that:
\beq\label{nonlinear ineq2}
\sup_{t\in[0,T_0]}\|\Phi_j(x)\R 
(\Phi_{\geq {j+2}}+\Phi_{\leq {j-2}})(\vr u+|u|^2)\|_{L^2}\lesim 2^{-j}
\eeq
We show for example the case of $\Phi_{\geq {j+2}}$ as the other is similar. Denote
$\hdb_{k}\R\tilde{\chi}^{L}g=G_{k}\star g$,
where
$$G_k(x)=\int e^{ix\cdot\xi}\Phi_{k}(\xi)
\f{\xi}{|\xi|}\d \xi=2^{2k}\int e^{i2^{k}x\cdot\xi}\Phi(\xi)
\f{\xi}{|\xi|}\d \xi$$
By virtue of the identity $e^{i2^{k}x\cdot\xi}=\f{1-i2^{k}x\cdot\na_{\xi} }{\langle 2^{k}x\rangle^2}e^{i2^{k}x\cdot\xi}$, one can integrate by parts to get that,
 for any non-negative integer $l$,
\beqs
|G_k(x)|\lesim 2^{2k}\langle 2^k |x|\rangle^{-l}.
\eeqs
Note that there is no singularity when the derivative hits on $\f{\xi}{|\xi|}$ since $\Phi(\xi)$ is supported on annulus.
Therefore, we have that
\beno
&&\|\Phi_j(x)\R 
\Phi_{\geq {j+2}}(\vr u+|u|^2)\|_{L^2}\lesim \sum_{k\in \mathbb{Z}}\|\Phi_j(x)\hdb_{k}\R\Phi_{\geq {j+2}}(\vr u+|u|^2)\|_{L^2}\\
&\lesim& \sum_{k\in \mathbb{Z}}\|G_{k}I_{|\cdot|\gtrsim 2^{j}}\star \Phi_{\geq {j+2}}(\vr u+|u|^2)\|_{L^2}
\lesim \sum_{k\in \mathbb{Z}}\|G_{k}I_{|\cdot|\gtrsim 2^{j}}\|_{L^2}\|\vr u+|u|^2\|_{L^1}\\
&\lesim& \sum_{k\in \mathbb{Z}}\langle 2^k 2^{j}\rangle^{-3}2^{k}\|(\vr,u)\|_{L^2}^{2}\lesim 2^{-j}\|(\vr,u)\|_{L^2}^{2}.
\eeno
where the following two inequalities has been used:
\beqs
\|G_{k}I_{|\cdot|\gtrsim 2^{j}}\|_{L^2}\lesim 2^{2k}(\int_{|x|\gtrsim 2^j} \langle 2^{k}|x|\rangle^{-8}\d x)^{\f{1}{2}}\lesim \langle 2^k 2^{j}\rangle^{-3}2^{k},
\eeqs
\beqs
\sum_{k\in \mathbb{Z}}\langle 2^k 2^{j}\rangle^{-3}2^{k}\lesim(\sum_{k\leq -j}+\sum_{k\geq -j})\langle 2^k 2^{j}\rangle^{-3}2^{k}\lesim 2^{-j}.
\eeqs

Finally, denote $\tilde{\Phi}_j=\Phi_{j-1}+\Phi_{j}+\Phi_{j+1}$, Corollary \ref{weighted high by weighted low} implies :
\ben\label{nonlinear ineq3}
\|\Phi_j \R \tilde{\Phi}_j(\vr u+|u|^2)\|_{L^2}&\lesim& \| \tilde{\Phi}_j(\vr u+|u|^2)\|_{L^2}\nonumber\\
&\lesim&\|\tilde{\Phi}_j(u^L,\vr^L)\|_{L^{4}}\|(u,\vr)\|_{L^4}+\|\tilde{\Phi}_j(u^h,\vr^h)\|_{L^2}\|(u,\vr)\|_{L^{\infty}}\nonumber\\
&\lesim_{\ep,\kpz}&(1+T_0)(2^{-j}+\|\tilde{\Phi}_j(u^L,\vr^L)\|_{L^{4}}).
\een
Define 
$y_j=\|\Phi_j(x)(\lnr u^L,\lnr \vr^{L})\|_{L^4}$,
by \eqref{linear flow local }, \eqref{nonlinear ineq1}-\eqref{nonlinear ineq3}, one gets that, as long as $T_0$ is chosen small enough,
$$y_j\lesim 2^{-j}+\f{1}{16}(y_{j-1}+y_j+y_{j+1}),$$
which, combined with the fact $y_j\lesim 1$ ($j\geq 0$) and iteration arguments, yields that
$y_j\lesim 2^{-j}$. We thus finish the proof of \eqref{claim}.

\section{Weighted $L^2$ norm for high frequency: a priori estimate}

 The goal of this section is to get the weighted $L^2$ estimate for $(\vr,u,\na \varphi)^{h}$ which shall be used in Section 7.
\begin{lem}
There exists a constant $\vartheta_2>0,$ for any $\varepsilon\in (0,1]$, if $\|U\|_{X_T}\leq \vartheta_2$, we have the a priori estimate:
\beq\label{weighted high frequency}
\|\langle x\rangle\big( \vr,u,\na \varphi)^{h}(t)\|_{L^2}\lesim \ltr^{2\delta}(\|\langle x\rangle(\vr_0,u_0,\na\varphi_0)^{h}\|+\|U\|_{X_T}\big), \quad \forall      t\in [0,T)
\eeq
\end{lem}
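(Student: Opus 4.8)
The plan is to derive an evolution inequality for the weighted norm $\|\langle x\rangle (\vr,u,\na\varphi)^h(t)\|_{L^2}$ directly from the high-frequency system \eqref{NSPhighfrequency}, in the same spirit as the local-existence estimate \eqref{weighted high local}, but now keeping track of the time-growth factors coming from the a priori bounds encoded in $\|U\|_{X_T}$. First I would introduce the cutoff weight $\psi_R(x)=x\Psi(x/R)$, multiply \eqref{NSPhighfrequency} by $\psi_R$, test against $\psi_R(\vr^h,u^h)$, and obtain an energy identity $\tfrac12\tfrac{\d}{\d t}\|\psi_R(\vr^h,u^h)\|_{L^2}^2=\mathcal T_1+\cdots+\mathcal T_5$ exactly as before. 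The commutator terms $[\psi_R,\chi^h(D)]$ and $[\psi_R,\na\Delta^{-1}\tilde\chi^h]$ are handled by Lemma \ref{commutator estimate} and the spectral localization of $\tilde\chi^h$, so that $\na\varphi^h$ contributes only $\|\psi_R\vr^h\|_{L^2}+\|\vr^h\|_{L^2}$ and all nonlinear pieces reduce to $\|\psi_R(u^h,\vr^h)\|_{L^2}$ times product norms of $(u,\vr)$, plus a term $\|\psi_R(u^L,\vr^L)\|_{L^\infty}\|(u,\vr)\|_{H^1}$ (and, from the low-frequency weight appearing in the high-frequency convolution, $\|x(u^L,\vr^L)\|_{L^\infty}$-type contributions handled as in \eqref{nonlinear ineq2}).

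The key new point is the time-decay bookkeeping. From $\|U\|_{X_T}\le \vartheta_2$ we have, splitting into frequency pieces using \eqref{def of real norm}: $\|(u,\vr,\na\varphi)\|_{W^{1,\infty}}\lesssim \ltr^{-1}\|U\|_{X_T}$ (shown already in the text for $\na u$, and analogously for the rest via the dispersive bound and the high/intermediate decay), while the low-order Sobolev norms $\|(u,\vr)\|_{H^1}$ are bounded (indeed decaying) by $\|U\|_{X_T}$. Therefore the Grönwall coefficient multiplying $\|\psi_R(u^h,\vr^h)\|_{L^2}$ is $1+\|(u,\vr)\|_{W^{1,\infty}}$, which is merely bounded in time — this is what produces, after integrating, a factor that one must control. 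The source terms are of the form (decaying or bounded)$\times\|U\|_{X_T}$ plus $\|x(u^L,\vr^L)\|_{L^\infty}\cdot(\text{decaying } H^s \text{ norm})$; by the claim \eqref{claim} extended globally (i.e. the $\ltr^{-\delta}\|U\|_{H^N}$ and weighted-norm entries of $X_T$), the low-frequency weighted $L^\infty$ norm is controlled by $\|U\|_{X_T}$ up to a slowly growing factor. Feeding all this into Grönwall's inequality over $[0,t]$ gives
\beno
\|\psi_R(u^h,\vr^h)(t)\|_{L^2}\lesssim e^{C\int_0^t (1+\|(u,\vr)\|_{W^{1,\infty}})\,\d s}\Big(\|\langle x\rangle(u_0^h,\vr_0^h)\|_{L^2}+\int_0^t (\cdots)\,\d s\Big),
\eeno
and since $\int_0^t \ltr^{-1}\d s\lesssim \ln\ltr\lesssim_\delta \ltr^{\delta}$ and the remaining time integral of the source is $\lesssim \ltr^{\delta}\|U\|_{X_T}$, one obtains the bound $\ltr^{2\delta}\big(\|\langle x\rangle(\vr_0,u_0,\na\varphi_0)^h\|_{L^2}+\|U\|_{X_T}\big)$ after letting $R\to\infty$; the $\na\varphi^h$ weighted norm is then recovered from $\vr^h$ and $\psi_R\vr^h$ exactly as in the proof of \eqref{weighted high local}.

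I expect the main obstacle to be the careful accounting of which source terms genuinely carry integrable-in-time decay versus which only carry the critical $\ltr^{-1}$ (giving the logarithm, hence the $\ltr^{2\delta}$ loss rather than a bounded estimate) — in particular making sure the low$\times$high interaction terms, where the low factor sits in $L^\infty$ via the dispersive estimate with decay only $(1+s)^{-1}$, and the weighted-low times high-Sobolev contributions, are both absorbed into $\ltr^{2\delta}\|U\|_{X_T}$ and do not force a faster growth. The commutator and Riesz-kernel localization estimates are routine (Lemma \ref{commutator estimate}, Corollary \ref{useful for the local existence in weighted norm}, and the kernel decay bound $|G_k(x)|\lesssim 2^{2k}\langle 2^k|x|\rangle^{-l}$ already used in \eqref{nonlinear ineq2}), so the analytic content is concentrated in the time-weight bookkeeping and in correctly invoking the pieces of the $X_T$ norm.
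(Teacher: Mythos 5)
Your overall strategy (a weighted energy estimate on the high-frequency system followed by Gr\"onwall) is the same as the paper's, but as written the two steps that actually make the estimate close are missing. First, the Gr\"onwall step is inconsistent: you keep the structure of the local-existence lemma, so the coefficient multiplying $\|\psi_R(u^h,\vr^h)\|_{L^2}$ is $1+\|(u,\vr)\|_{W^{1,\infty}}$, which is only bounded in time; Gr\"onwall then produces a factor $e^{Ct}$, not $\ltr^{2\delta}$, and your next sentence silently replaces this coefficient by $\langle s\rangle^{-1}$. To get a polynomially small loss one must (i) eliminate the $O(1)$ part of the coefficient: in the a priori regime the linear coupling terms (your $\mathcal T_1$--$\mathcal T_3$, the paper's $G_1$--$G_3$) are not order-one, because the unweighted high-frequency norms already decay, $\|(\vr,u,\na\varphi)^h\|_{L^2}+\ep\|\na u^h\|_{L^2}\lesim \ltr^{-\alpha}\|U\|_{X_T}$, so every term multiplying the weighted norm carries a factor $\lesim (1+t)^{-1}\|U\|_{X_T}$; and (ii) use the smallness hypothesis $\|U\|_{X_T}\leq\vartheta_2$ with $\vartheta_2\lesssim\delta$, so that the Gr\"onwall exponent $c_1\|U\|_{X_T}\ln\ltr\leq\delta\ln\ltr$ yields precisely the $\ltr^{\delta}$ (hence $\ltr^{2\delta}$) loss. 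Your proposal never invokes this smallness, yet it is the only reason the hypothesis $\|U\|_{X_T}\leq\vartheta_2$ appears in the lemma; even with the $O(1)$ part removed, an absolute constant in front of $(1+t)^{-1}$ would give $\ltr^{c_1}$, which is not $\ltr^{2\delta}$.

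Second, the control of the weighted low-frequency factor is not justified. You appeal to ``claim \eqref{claim} extended globally,'' but \eqref{claim} is proved only on a small interval $[0,T_0]$ by an iteration that uses the smallness of $T_0$; it is not an entry of the $X_T$ norm and does not propagate. The correct global input, which the paper establishes first, is $\|x(\vr,u,\na\varphi)^L(t)\|_{W^{1,4}}\lesim \ltr^{\f{1}{2}+\delta}\|U\|_{X_T}$, deduced from the weighted profile component $\|xe^{itb(D)}w\|_{W^{4,\f{2}{1-\delta}}}$ of $X_T$ together with Lemma \ref{Lp bounds for eitb(D)}; note that it grows like $\ltr^{\f{1}{2}+\delta}$, not ``slowly.'' Because of this growth one must also use the support identity $\chi^{l}(\xi-\eta)\chi^{l}(\eta)\chi^{h}(\xi)=0$: the high-frequency projection of the quadratic terms contains no lowest$\times$lowest interaction, so the weighted low-frequency factor is always paired with an intermediate or high factor decaying like $\ltr^{-\f{3}{2}}$ in $W^{1,4}$ or $\ltr^{-\alpha}$ in $H^{N-2}$, which yields sources of size $(1+t)^{-(1-\delta)}\|U\|_{X_T}^2$ whose time integral is $\lesim\ltr^{\delta}$. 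Without this frequency decomposition, the weighted-low times low contribution (the partner decaying only like $\ltr^{-\f{1}{2}}$ in $L^4$) gives a source of size $\ltr^{\delta}$, whose integral grows like $\ltr^{1+\delta}$ and destroys the claimed bound. These two points—decaying, small Gr\"onwall coefficients and the exclusion of low$\times$low interactions in the high-frequency output—are the analytic content of the lemma, and they are absent from your bookkeeping.
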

\begin{proof}

We first claim that the following estimate holds,
\beq
\|x(\vr,u,\na\varphi)^{L}(t)\|_{W^{1,4}}\lesim \ltr^{\f{1}{2}+\delta}\|U\|_{X_T}.
\eeq
Indeed, 
by virtue of the Hardy-Littlewood-Sobolev inequality, 
the $L^p$ boundedness of $e^{itb(D)}\tilde{\chi}^{L}$
(Lemma \ref{Lp bounds for eitb(D)}),
\beno
\|x(\vr,u,\na\varphi)^{L}\|_{W^{1,4}}&=&\|x \R \tilde{\chi}^{L} w\|_{W^{1,4}}
\approx\||\na|^{-1}w\|_{W^{1,4}}+\|xw\|_{W^{1,4}}
\lesim\|\langle x\rangle w \|_{W^{1+\delta,4-\delta}}\\
&\lesssim&\|w\|_{H^2}+\|xe^{itb(D)}f\|_{W^{1+\delta,4-\delta}}
\lesim\|U\|_{X}+t\|w\|_{W^{1+\delta,4-\delta}}+\|e^{itb(D)}\tilde{\chi}^{L}xf\|_{W^{1+\delta,4-\delta}}\\
&\lesim&\|U\|_{X_T}+t^{1-(1-\f{2}{4-\delta})}\|U\|_{X_T}+\|e^{itb(D)}\tilde{\chi}^{L}xf\|_{W^{\f{3}{2}+\delta,2+\delta}}\lesim\ltr^{\f{1}{2}+\delta}\|U\|_{X_T}.
\eeno
Note that $f=e^{-itb(D)}w.$
Hereafter, unless specifically emphasized, the spatial Sobolev norm is estimated in each temporary time $t\in [0,T).$\\
The system projected onto the high frequency reads:
 \beq \label{NSPhighfrequency1}
 \left\{
\begin{array}{l}
\displaystyle \pt \vr^{h} +\div u^{h}+\div( \vr u)^{h}=0,\\
\displaystyle \pt u^{h}+(u \cdot {\na u})^{h}-2\varepsilon \Delta u^{h}+
\nabla \vr^{h}-\nabla \varphi^{h}=0 ,  \\
\displaystyle \Delta \varphi^{h} =\vr^{h},\\
\displaystyle u|_{t=0} =\mathcal{P}^{\perp}u_0^{\varepsilon} ,\vr|_{t=0}=\vr_0^{\varepsilon}.
\end{array}
\right.
\eeq
Multiplying the system \eqref{NSPhighfrequency1} by $x$, and testing it by $x(\vr^h,u^h)$, we obtain the energy equality:
\beno
&&\f{1}{2}\f{\d}{\d t}\|x(\vr^h,u^h,\na \varphi^{h})\|_{L^2}^{2}+2\ep \int|x\na u^{h}|^{2}\d x\\
&=&-\int x^{2}(\div u^{h}\vr^{h}-\na \vr^{h} u^{h})\d x+\int x^{2}((\na\varphi)^{h} u^{h}-\na\varphi^{h}\na (\Delta)^{-1}\div u^{h})\d x -4\ep \int \na u^{h}xu^{h} \d x\\
&&-\int x\div(\vr u)^{h}x\vr^{h}\d x-\int x(u\cdot\na u)^{h}x u^{h}\d x-\int x\na\varphi^{h}x\na(\Delta)^{-1}\div(\vr u)^{h}\d x\\
&\define&G_1+G_2+\cdots G_6.
\eeno
The following task is to estimate $G_1,\cdots G_6.$
At first, by integration by parts
and H\"{o}lder inequality, 
$G_1$ can be controlled as:
\beqs
G_1=-2\int x\vr^{h}u^{h}\d x\lesim \|x\vr^{h}\|_{L^2}\|u^{h}\|_{L^2}.
\eeqs
Similarly, by using the fact $\|\varphi^h\|_{L^2}\lesssim \sqrt{\f{\ep}{\kpz}}\|\na\varphi^h\|_{L^2},$  
\beno
G_2&=&2\int \varphi^{h}x(\na(\Delta)^{-1}\div -1)u^{h}\d x \\
&\lesssim& \|\varphi^h\|_{L^2} (\|xu^h\|_{L^2}+\|[x,\na(\Delta)^{-1}\div\tilde{\chi}^{h}]u^h\|_{L^2})\\
&\lesim& 
\|\varphi^h\|_{L^2}(\|xu^{h}\|_{L^2}+
\|\partial_{\xi}
(\f{\xi_{i}\xi_{j}}{|\xi|^{2}}\tilde{\chi}^{h})\|_{L_{\xi}^{\infty}}\|u^{h}\|_{L^2})\\
&\lesim&(\sqrt{\f{\ep}{\kpz}}+\f{\ep}{\kpz})\|\na \varphi^{h}\|_{L^2}(\|u^{h}\|_{L^{2}}+\|xu^{h}\|_{L^2}).
\eeno
For $G_3$, we just use H\"{o}lder inequality: $G_3\leq 2\ep \|\na u^{h}\|_{L^2}\|xu^h\|_{L^2}$.\\
 By setting $P(f,g)=f^{h}g+f^{L}g^{h}+f^{L}g^{m}+f^{m}g^{l}$, we estimate $G_4$ as follows:
\beno
G_4&=&-\int x\div(\vr u)^{h} x\vr^{h}\d x
=-\int ([x,\chi^{h}]\div(\vr u)+\chi^{h}x\big(P(\na\vr, u)+P(\vr ,\div u))\big)x\vr^{h}\d x\\
&\lesim& \|x\vr^{h}\|_{L^{2}}(\sqrt{\f{\ep}{\kpz}}\|\div(\vr u)^{h}\|_{L^2}+\|x\big(P(\na\vr, u)+P(\vr ,\div u)\big)\|_{L^2})\\
&\lesim& \|x\vr^{h}\|_{L^{2}}\sqrt{\f{\ep}{\kpz}}\|(\vr,u)\|_{L^2}\|\na(\vr,u)\|_{L^{\infty}}
+\|x\vr^{h}\|_{L^{2}}\big(\ltr^{-(1-\delta)} \|U\|_{X_T}^{2}+\|x(\vr^{h},u^{h})\|_{L^2}\ltr^{-1}\|U\|_{X_T}\big)\\
&\lesim&\ltr^{-1}\|U\|_{X_T}(\|x\vr^{h}\|_{L^{2}}^{2}+\|x u^{h}\|_{L^{2}}^{2})+\ltr^{-(1-\delta)}\|U\|_{X_T}^{2}\|x\vr^{h}\|_{L^{2}}.
\eeno
where we have used that:
\beno
\|xP(\na\vr, u)\|_{L^{2}}+\|xP(\vr ,\div u)\|_{L^2}\lesim (1+t)^{-1}\|x(\vr,u)^{h}\|_{L^2}+(1+t)^{-(1-\delta)}\|U\|_{X_T}^{2}.
\eeno
For example, since $\|(u^{m},\vr^m)\|_{W^{1,4}}\lesim \ltr^{-\f{3}{2}}\|U\|_{X_T},\|u^{h}\|_{H^M}\lesim \ltr^{-\alpha}\|U\|_{X_T},(\alpha=2-5\delta)$, one has that
\beno
\|xP(\na\vr, u)\|_{L^{2}}&\lesim& \|xu^{h}\|_{L^2}\|\na\vr^{h}\|_{L^{\infty}}+\|xu^{L}\|_{L^4}\|\na\vr^{h}\|_{L^4}
+\|x\na\vr^L\|_{L^4}\|u^{h}+u^m\|_{L^4}+\|xu^{L}\|_{L^4}\|\na\vr^{m}\|_{L^4}\\
&\lesim& (1+t)^{-1}\|x(\vr,u)^{h}\|_{L^2}+(1+t)^{-(1-\delta)}\|U\|_{X_T}^{2}.
\eeno
The estimate of $G_5,G_6$ is similar, we thus omit the details.
To summarize, we have obtained:
\ben \label{xu^{h}}
&&\f{\d}{\d t}(\|x(\vr,u,\na\varphi)^{h}\|_{L^2}^{2})\nonumber\\
&\lesim& (1+t)^{-1}\|U\|_{X_T}\|x(\vr,u,\na\varphi)^{h}\|_{L^2}^{2}
+(1+t)^{-(1-\delta)}\|U\|_{X_T}^{2}\|x(\vr,u,\na\varphi)^{h}\|_{L^2}\nonumber\\
&&+(1+t)^{-\alpha}\|U\|_{X_T}\|\langle x\rangle(\vr,u,\na\varphi)^{h}\|_{L^2}.
\een
In the same fashion, one can show that:
\ben \label{u^{h}}
\f{\d}{\d t}(\|(\vr,u,\na\varphi)^{h}\|_{L^2}^{2})&\lesim&(1+t)^{-1}\|U\|_{X_T}\|(\vr,u,\na\varphi)^{h}\|_{L^2}^{2}
\een
Finally, we set $g(t)=\|\langle x\rangle(\vr,u,\na\varphi)^{h}\|_{L^2}$. Summing up the above two estimates \eqref{xu^{h}}-\eqref{u^{h}}, we see that there exists three constants $c_1,c_2,c_3$ which are independent of $\ep$, such that
\beno
\f{\d}{\d t}g(t)\leq c_1 (1+t)^{-1}\|U\|_{X_T}g(t)+c_2(1+t)^{-(1-\delta)}\|U\|_{X_T}^{2}+c_3(1+t)^{-\alpha}\|U\|_{X_T}.
\eeno
Suppose that $\|U\|_{X_T}\leq \vartheta_2\leq  \frac{\delta}{c_1}$, then for any $0\leq t<T,$ the Gr\"{o}nwall inequality leads to:
\beqs
g(t)\lesim \ltr^{\delta}g(0)+\ltr^{2\delta}\|U\|_{X_T}^{2}+\|U\|_{X_T}\lesim \ltr^{2\delta}(\|\langle x\rangle(\vr_0,u_0,\na\varphi_0)^{h}\|_{L^2}+\|U\|_{X_T}).
\eeqs
\end{proof}

\section{Estimate of Sobolev norm}
In this section, we aim to get the highest Sobolev estimate for $U$: $\|U\|_{H^N}$ and Sobolev estimate for high and intermediate frequencies: $\|U^h\|_{H^{N-2}}$, $\|U^m\|_{H^{N-1}}$ and $\|U^m\|_{W^{1,4}}$.
\subsection {Control of highest Sobolev norms}
Define the energy norm
$$E_N(t)=\sum_{|\alpha|\leq N}\f{1}{2}\int \rho |\pa u(t)|^{2}+|\pa \vr(t)|^{2}+|\pa \na\phi(t)|^{2}\d x.$$
 In our former paper \cite{rousset2019stability} where the 3d NSP is considered, it has been showed that if $\|\vr\|_{L^{\infty}}\lesssim \f{1}{6}$, then the following energy inequality holds:
$$\f{\d }{\d t}E_N(t) \lesssim (\| u(t)\|_{W^{1,\infty}}+\|\varrho(t)\|_{L^{\infty}})E_N(t).$$ However, such an inequality is not enough to close the energy estimate in 2d case. Indeed, due to the presence of Riesz potential in the quadratic nonlinearity (see \eqref{eqU}), one could only expect that $\|\na u(t)\|_{L^{\infty}}$ rather than $\|u(t)\|_{W^{1,\infty}}$ has the critical decay
$(1+t)^{-1}$ .
Nevertheless, it is not hard to modify the proof in \cite{rousset2019stability} to get that:
\ben\label{energyeq}
\f{\d }{\d t}E_N(t) \lesssim (\|\na u(t)\|_{L^{\infty}}+\|\varrho(t)\|_{L^{\infty}})E_N(t).\een
Indeed, denote $E_{\alpha}=\f{1}{2}\int \rho |\pa u|^{2}+|\pa \rho|^{2}+|\pa \na\phi|^{2}\d x$, we then have by using the equations \eqref{ANSP1} that:
\beno
\frac{\d}{\d t}E_{\alpha}
&=&\int\rho \pa u\cdot \big[\pa,u\big]\na u+\int\pa \rho \div\big([\pa,u ]\rho\big)\\
&&+\int[\rho \pa u\pa\na\varphi+\pa\varphi\pa\div(\rho u)]+2\ep \int
 \rho \pa u\cdot\pa \Delta u \\
&\define&L_1+L_2+L_3+L_4
\eeno
One can estimate all the terms in the same way as that in \cite{rousset2019stability} except the term  $L_3$. However, for any  $|\alpha|\geq 1$, it can be rewritten  as 
\beno
L_3&=&\int\vr \pa u\pa \na \varphi+\pa \varphi \pa\div(\vr u)\d x\\
&=&\int\vr \pa u\pa \na \varphi+\pa(\vr \div u)\pa\varphi+[\pa,u]\na\vr\pa\varphi+\pa\na\vr\cdot u\pa\na\varphi\d x\\
&=&\int\pa\varphi\big([\pa,\vr]\div u+\na\vr\cdot\pa u\big)+[\pa,u]\na\vr\pa\varphi-\pa\vr\div u\pa\varphi-\f{\pa |\na\varphi|^2}{2}\div u \\
&&\qquad\qquad\qquad\qquad\qquad \qquad\qquad\qquad+\pa\na\varphi\cdot\na u\cdot \pa\na\varphi \d x
\eeno

Notice that in the above expressions, there is at least one spatial derivative
in front of $u$, we thus conclude by standard commutator estimate that:
$$|L_3|\leq (\|\na u\|_{L^{\infty}}+\|\vr\|_{W^{1,\infty}})(\|u\|_{\dot{H}^{|\alpha|}}^2+\|\vr\|_{\dot{H}^{|\alpha|}}^2+\|\na\varphi\|_{\dot{H}^{|\alpha|-1}\cap \dot{H}^{|\alpha|}}^2),$$
which ends the proof \eqref{energyeq}.

It is easy to see that $\|U\|_{H^{N}}^2\approx E_{N}$ by noting the relation $u=\R \mathrm{c},\vr=\f{|\na|}{\lnr}\mathrm{a}.$
This, combined with \eqref{energyeq} and the definition of $X_T$ norm \eqref{def of real norm}, yields:
\beno
\|U\|_{H^N}^{2}\lesssim E_{N}(t)
&\lesssim&E_N(0)+\int_{0}^{t}(\|\na u\|_{L^{\infty}}+\|\vr\|_{L^{\infty}})\|U(s)\|_{H^N}^{2}\d s\\
&\lesssim& E_N(0)+\int_{0}^{t} (1+s)^{-1}(1+s)^{2\delta}\|U\|_{X_T}^{2}\d s
\lesssim E_N(0)+(1+t)^{2\delta}\|U\|_{X_T}^{2}.
\eeno


\subsection{High and intermediate frequency estimate}
We have firstly the following estimate for nonlinear term $F(a,c)=F(U,U)$.
\begin{lem}\label{nonlinear estimate}
For every $t\in[0,T),$ the following estimate holds:
\beq\label{nonlinear estimate1}
 \|\chi^{h}F(U,U)(t)\|_{H^{N-2}}\lesssim (1+t)^{-(2-5\delta)}\|U\|_{X}^{2},
 \eeq
 \beq \label{nonlinear estimate2}
 \|F(U,U)(t)\|_{H^{N-1}}\lesssim (1+t)^{-(1-3\delta)}\|U\|_{X_T}^{2}.
 \eeq
\end{lem}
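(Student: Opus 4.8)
Both bounds are Moser--type product estimates that trade the one derivative sitting in front of the quadratic nonlinearity against the decay already recorded in $\|\cdot\|_{X_T}$. Modulo zeroth--order Fourier multipliers with $\ep$--uniform $L^p$-- and $H^s$--bounds (the Riesz transforms $\R$ and the symbol $\frac{|\na|}{\lnr}$), one has $F(U,U)=\lnr\,\mathcal B(U,U)$ --- with $|\na|$ instead of $\lnr$ in the $F_2$ component --- where $\mathcal B(U,U)$ is a finite sum of products $V_1V_2$, each $V_i$ a zeroth--order image of a component of $U$. Since $\|\lnr g\|_{H^{s}}=\|g\|_{H^{s+1}}$, $\|\,|\na|\,g\|_{H^{s}}\lesssim\|g\|_{H^{s+1}}$, and since $\chi^h$ lives where $|\xi|\le\lnr\lesssim|\xi|$, it suffices to control $\|\mathcal B(U,U)\|_{H^{N}}$ for \eqref{nonlinear estimate2} and $\|\chi^h\mathcal B(U,U)\|_{H^{N-1}}$ for \eqref{nonlinear estimate1}. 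The decay inputs I would feed in (each carrying a $\|U\|_{X_T}$) are: the energy bound $\|U\|_{H^{N}}\lesssim\ltr^{\delta}$ from the preceding subsection; the quasi--$L^\infty$ bound $\|U^{L}\|_{L^\infty}\lesssim\|U^{L}\|_{W^{2,1/\delta}}\lesssim\ltr^{-(1-2\delta)}$ noted just after \eqref{def of real norm}; the two--dimensional embeddings $\|U^{h}\|_{L^\infty}\lesssim\|U^{h}\|_{H^{N-2}}\lesssim\ltr^{-(2-5\delta)}$ and $\|U^{m}\|_{L^\infty}\lesssim\|U^{m}\|_{W^{1,4}}\lesssim\ltr^{-3/2}$; $\|U^{m}\|_{H^{N-1}}\lesssim\ltr^{-(1-3\delta)}$; and, by interpolation, $\|U^{h}\|_{H^{N-1}}\lesssim\|U^{h}\|_{H^{N-2}}^{1/2}\|U^{h}\|_{H^{N}}^{1/2}\lesssim\ltr^{-(1-3\delta)}$. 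All of these pass to the $V_i$'s because $\R$ and $\frac{|\na|}{\lnr}$ are bounded on every space occurring.

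\textbf{The second estimate} is then immediate, with no frequency localization: $\|\mathcal B(U,U)\|_{H^{N}}\lesssim\|V_1\|_{H^{N}}\|V_2\|_{L^\infty}+\|V_1\|_{L^\infty}\|V_2\|_{H^{N}}\lesssim\ltr^{\delta}\ltr^{-(1-2\delta)}\|U\|_{X_T}^2=\ltr^{-(1-3\delta)}\|U\|_{X_T}^2$, using $\|V\|_{L^\infty}\le\|V^{L}\|_{L^\infty}+\|V^{h}\|_{L^\infty}\lesssim\ltr^{-(1-2\delta)}\|U\|_{X_T}$. Note the exponents balance exactly here, which is what fixes the powers of $\ltr$ in the definition of $\|\cdot\|_{X_T}$.

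\textbf{The first estimate.} Decompose $U=U^{l}+U^{m}+U^{h}$ and expand $\mathcal B(U,U)$ into the nine corresponding products. The crucial support identity $\chi^{l}(\xi-\eta)\,\chi^{l}(\eta)\,\chi^{h}(\xi)\equiv0$ (two lowest frequencies cannot build a highest one) removes the $V^{l}V^{l}$ contribution, leaving six products to estimate in $H^{N-1}$ by $\ltr^{-(2-5\delta)}\|U\|_{X_T}^2$. The three products $\mathcal B(V^{m},V^{m})$, $\mathcal B(V^{m},V^{h})$, $\mathcal B(V^{h},V^{h})$ are pure bookkeeping: e.g.\ $\|\mathcal B(V^{m},V^{m})\|_{H^{N-1}}\lesssim\|V^{m}\|_{H^{N-1}}\|V^{m}\|_{L^\infty}\lesssim\ltr^{-(1-3\delta)-3/2}$, $\|\mathcal B(V^{h},V^{h})\|_{H^{N-1}}\lesssim\|V^{h}\|_{H^{N-1}}\|V^{h}\|_{L^\infty}\lesssim\ltr^{-(1-3\delta)-(2-5\delta)}$, and $\|\mathcal B(V^{m},V^{h})\|_{H^{N-1}}$ follows by distributing derivatives onto whichever factor is cheaper; all are $\lesssim\ltr^{-(2-5\delta)}$ with room to spare. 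The burden therefore rests on the two mixed lowest--with--higher products $\mathcal B(V^{l},V^{m})$ and $\mathcal B(V^{l},V^{h})$, in which the low factor $V^{l}$ has no $L^2$ decay.

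\textbf{The main obstacle} is precisely these two terms. For $\chi^{h}\mathcal B(V^{l},V^{\ast})$, $\ast\in\{m,h\}$, I would run a Bony/paraproduct decomposition. In the low--high paraproduct the output frequency is comparable to that of $V^{\ast}$, so all $N-1$ derivatives can be placed on $V^{\ast}$, and the term is $\lesssim\|V^{l}\|_{L^\infty}\|V^{\ast}\|_{H^{N-1}}\lesssim\ltr^{-(1-2\delta)}\ltr^{-(1-3\delta)}=\ltr^{-(2-5\delta)}\|U\|_{X_T}^2$ --- again with no slack, this being exactly what the powers in $\|\cdot\|_{X_T}$ afford. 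In the remaining (high--low and resonant) pieces a derivative is forced onto $V^{l}$; the point is that, once the output is cut by $\chi^{h}$ so that $\ep|\xi|^2\gtrsim\kpz$ and $|\xi-\eta|\gtrsim|\xi|$ on those pieces, the frequency $|\xi-\eta|$ of the $V^{l}$--factor is pinned to the top of the lowest band, $\ep|\xi-\eta|^2\gtrsim\kpz$, where $\Re\lambda_{\pm}\le-c(\kpz)<0$ uniformly in $\ep$. On that band $V^{l}$ overlaps the intermediate component (the partition gives $\chi^{l}+\chi^{m}=1$ there, since $\chi^{h}$ vanishes), so --- using the dissipative decay recorded through Lemmas \ref{cor immediate norm } and \ref{high frequency estiamte}, as in the a priori estimate for the intermediate frequency --- that part of $V^{l}$ decays like $\ltr^{-(1-3\delta)}$, after which the estimate closes. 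Collecting all contributions yields $\|\chi^{h}F(U,U)(t)\|_{H^{N-2}}\lesssim\ltr^{-(2-5\delta)}\|U\|_{X_T}^2$. This is where the three--scale frequency splitting earns its keep: it converts the forbidden lowest$\times$lowest interaction into a decay gain and traps any derivative--bearing low factor in the dissipative band --- and carefully verifying this pinning, together with the control of $V^{l}$ on the overlap band, is the step I expect to be the crux of the proof.
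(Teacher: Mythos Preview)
Your treatment of \eqref{nonlinear estimate2} is correct and essentially the paper's. For \eqref{nonlinear estimate1} the setup --- expand into nine pieces, kill $l\times l$ by the support identity, dispatch the $m\times m$, $m\times h$, $h\times h$ terms --- is also right. The gap is in your handling of the high--low and resonant paraproduct pieces of $\chi^h(V^lV^m)$: you claim that, once the output is cut by $\chi^h$, the $V^l$--factor is pinned to the top of the lowest band and there ``decays like $\ltr^{-(1-3\delta)}$'' because it overlaps with $\chi^m$. This is not available from $\|\cdot\|_{X_T}$. On that band one has $\chi^l=1-\chi^m$, hence $PU^l=PU-PU^m$; while $\|PU^m\|_{H^{N-1}}$ decays, $\|PU\|_{H^{N-1}}$ does not --- from the norm you only get $\|PU\|_{H^{N-1}}\le\|U\|_{H^{N}}\lesssim\ltr^{\delta}\|U\|_{X_T}$. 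One could salvage this by a separate Duhamel argument for $PU$ (using \eqref{nonlinear estimate2}, already proved, and the exponential semigroup decay on that band), but that is an extra step you have not written and it produces a constant involving $\|U_0\|_{H^{N-1}}+\|U\|_{X_T}^2$ rather than the clean $\|U\|_{X_T}$.

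The paper's route avoids the issue entirely: no paraproduct decomposition is needed for $V^lV^m$, because the one--sided bound you already got for the low--high piece in fact controls the \emph{whole} product. Since $V^m$ is supported on the annulus $|\eta|\in[\sqrt{\kappa_0/(2\ep)},\sqrt{3\kappa_0/\ep}]$ and $V^l$ on $|\xi-\eta|\le\sqrt{\kappa_0/\ep}$, the output satisfies $|\xi|\le(1+\sqrt3)\sqrt{\kappa_0/\ep}\le(1+\sqrt3)\sqrt2\,|\eta|$ uniformly in $\ep$; hence $\langle\xi\rangle^{N-1}\lesssim\langle\eta\rangle^{N-1}$ and
\[
\|\R U^l\cdot\R U^m\|_{H^{N-1}}\lesssim\|\R U^l\|_{L^\infty}\|\R U^m\|_{H^{N-1}}\lesssim\ltr^{-(1-2\delta)}\ltr^{-(1-3\delta)}\|U\|_{X_T}^2,
\]
which is exactly \eqref{low-int}. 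The same frequency comparison (output $\lesssim$ freq$(V^h)$) handles $V^lV^h$. So the ``main obstacle'' you identify dissolves once you notice that the annular localization of the non--$l$ factor already lets all derivatives land there.
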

\begin{proof}{}
We begin with the proof of \eqref{nonlinear estimate1}. By the definition of truncation functions, one has $\chi^{l}(\xi-\eta)\chi^{l}(\eta)\chi^{h}(\xi)=0$, which leads to the decomposition:
\beqs
\chi^{h}(D)F(U,U)=\chi^{h}(D)\big(F(U^L,U^h)+F(U^h,U)+F(U^l,U^m)+F(U^m,U^l)+F(U^{m},U^{m})\big).
\eeqs
We only detail the estimation of $F(U^l,U^m)$, the other terms are much easier.
By the definition, one has that
 $F(U,U)\approx\lnr \R (\R U\cdot\R U)$. Therefore, owing to the tame estimate, Sobolev embedding and the definition of $X_T$ norm,
\begin{equation}\label{low-int}
\begin{aligned}
\|F(U^{l},U^m)\|_{H^{N-2}}&\lesssim \|\R U^{l}\cdot\R U^{m}\|_{H^{N-1}}\lesssim \|\R U^l\|_{L^{\infty}}\|\R U^m\|_{H^{N-1}}\\
&\lesssim\|\R U^l\|_{W^{1,\f{1}{\delta}}}\|U^m\|_{H^{N-1}}\lesssim (1+t)^{-(2-5\delta)}\|U\|_{X_T}^{2}.  
\end{aligned}
\end{equation}
We next show \eqref{nonlinear estimate2}, by splitting $F(U,U)$ into:
\beqs
F(U,U)=F(U^{L},U^L)+F(U^L,U^h)+F(U^h,U).
\eeqs
Similar to \eqref{low-int},  $F(U^L,U^L)$ can be controlled as: 
\beno
\|F(U^L,U^L)\|_{H^{N-1}}\lesssim \|(\R U^L)^2\|_{H^{N}}
\lesssim\|\R U^L\|_{W^{1,\f{1}{\delta}}}\|\R U^L\|_{H^{N}}\lesssim(1+t)^{-(1-3\delta)}\|U\|_{X_T}^{2}.
\eeno
The other two terms are easier, we omit the detail.
\end{proof}
\subsubsection{High frequency estimate: control of $\|U^h\|_{H^{N-2}}$}
 
 By Duhamel's formula:
 $$U^{h}(t)=e^{-tA}U_{0}^{h}+\int_{0}^{t}e^{-(t-s)A}\chi^{h}F(U,U)(s)\d s,$$
 Lemma \ref{high frequency estiamte} and Lemma \ref{nonlinear estimate}  then imply:
\beno
\|U^{h}\|_{H^{N-2}}&\lesssim& e^{-ct}\|U_0\|_{H^{N-2}}+\int_{0}^{t}e^{-c(t-s)}\|{\chi}^{h}(F(U,U))\|_{H^{N-2}}\d s\\
&\lesssim& e^{-ct}\|U_0\|_{H^{N-2}}+\int_{0}^{t}e^{-c(t-s)}(1+s)^{-\alpha}\|U\|_{X_T}^{2}\d s\\
&\lesssim&e^{-ct}\|U_0\|_{H^{N-2}}+(1+t)^{-\alpha}\|U\|_{X_T}^{2}.
\eeno

\subsubsection{Intermediate frequency estimate: control of $\|U^{m}\|_{H^{N-1}}$ and $\|U^{m}\|_{W^{1,4}}$ }

 By Duhamel's formula, 
 Lemma \ref{nonlinear estimate}, and Lemma \ref{high frequency estiamte}, one can control the Sobolev norm of intermediate frequency as follows
\beno
\|U^{m}\|_{H^{N-1}}&\lesssim&
e^{-ct}\|U_{0}^{m}\|_{H^{N-1}}+\izt e^{-c(t-s)}\|F(U,U)(s)\|_{H^{N-1}}\d s
\\
&\lesssim&e^{-ct}\|U_0\|_{H^{N-1}}+\int_{0}^{t}e^{-c(t-s)}(1+s)^{-(1-3\delta)}\|U\|_{X_T}^{2}\d s\\
&\lesssim&e^{-ct}\|U_0\|_{H^{N-1}}+(1+t)^{-(1-3\delta)}\|U\|_{X_T}^{2}.
\eeno

We can  estimate $\|U^m\|_{W^{1,4}}$ in the same fashion. In fact, by Corollary \ref{cor immediate norm } (we will use $(\f{1}{2})_{+}=\f{3}{4}$), Lemma \ref{high regularity decay}(we use relation $\f{11}{4}\leq \f{2}{5}\cdot 7$), the definition of $X_T$ norm, we get:
\beno
\|U^m\|_{W^{1,4}}&\lesssim& e^{-ct}\|U_{0}^m\|_{W^{\f{7}{4},4}}+\int_{0}^{t}e^{-c(t-s)}\|F(U,U)\|_{W^{\f{7}{4},4}}\d s\\
&\lesssim& e^{-ct}\|U_{0}^m\|_{W^{\f{7}{4},4}}+\int_{0}^{t}e^{-c(t-s)}\|U\|_{W^{\f{11}{4},5}}\|U\|_{L^{20}}\d s\\
&\lesssim&e^{-ct}\|U_{0}^m\|_{H^{\f{9}{4}}}+\int_{0}^{t}e^{-c(t-s)}(1+s)^{-\f{3}{2}}\|U\|_{X_T}^{2}\d s\\
&\lesssim& e^{-ct}\|U_0\|_{H^{\f{9}{4}}}+(1+t)^{-\f{3}{2}}\|U\|_{X_T}^{2}.
\eeno

\section{Low frequency estimate}
In this section, we focus on the a priori estimate of Low frequency: $\||\na|^{\f{1}{2}}\lnr Q^{-1}U^L\|_{L^{\infty}}$, $\|xe^{itb(D)}w\|_{W^{4,\f{2}{1-\delta}}}$, $\|U^{L}\|_{H^{N'}}.$ In practice, we shall perform the decay estimate and weighted estimate in the same time.

By equation \eqref{eq of w} and Duhamel principle:
\ben\label{formula of w}
w&=&e^{t\lambda_{-}(D)}w_0+\R\int_{0}^{t}e^{(t-s)\lambda_{-}(D)}(B(w,w)+n(D)\chi^{L}H)(s)\d s\nonumber\\
&\define& K_1+\R \big(K_2+K_3\big)
\een
 To close the decay estimate for $\R K_2$,  the 'space-time resonance' philosophy that change the quadratic nonlinearity to the cubic one needs to be enforced. More specifically, we rewrite (2) in the following fashion. Recall the definition of the phase function $\phi_{\mu,\nu}(\xi,\eta)=i(b(\xi)-\mu b(\xi-\eta)-\nu b(\eta))+\ep(|\xi|^2-|\xi-\eta|^2-|\eta|^2)$ 
 $(\mu,\nu\in\{+,-\})$ and the bilinear operator $T_{m}$ in \eqref{defbilinear}. 
Denote  $\tilde{f}=e^{-t\lambda_{-}(D)}w,$ then $f$ is governed by 
\beqs
e^{t\lambda_{-}(D)}\pt \tilde{f}=\R^2(B(w,w)+\lnr\chi^{L}H).
\eeqs
One thus has by identity $e^{s\phi_{\mu,\nu}}=\f{1}{\phi_{\mu,\nu}}\partial_{s}e^{is\phi_{\mu,\nu}}$ and integration by parts in time that:
\ben \label{quadratic low term}
&&\int_{0}^{t}e^{(t-s)\lambda_{-}(D)}B(w,w)\d s\nonumber\\
&=&e^{t\lambda_{-}(D)}\sum_{\mu,\nu\in\{+,-\}}\cF^{-1}\big(\int_{0}^{t}\int
e^{-s\lambda_{-}(D)} m_{\mu\nu}(\xi,\eta)\widehat{\R w^{\mu}}(s,\xi-\eta)\widehat{\R w^{\nu}}(s,\eta)\d \eta \d s  \big)\nonumber\\
&=&e^{t\lambda_{-}(D)}\sum_{\mu,\nu\in\{+,-\}}\cF^{-1}\big(\int_{0}^{t}\int
e^{s\phi_{\mu,\nu}} m_{\mu\nu}(\xi,\eta)\widehat{\R \tilde{f}^{\mu}}(s,\xi-\eta)\widehat{\R \tilde{f}^{\nu}}(s,\eta)\d \eta \d s  \big)\nonumber
\een
\ben
&=&\sum_{\mu,\nu\in\{+,-\}} \big[T_{\f{m_{\mu\nu}}{\phi_{\mu\nu}}}(\R w^{\mu}(t),\R w^{\nu}(t))-e^{t\lambda_{-}(D)}e^{\ep t\Delta}[T_{\f{m_{\mu\nu}}{\phi_{\mu\nu}}}(\R w^{\mu}_0,\R w^{\nu}_0)\nonumber\\
&&-\int e^{-i(t-s)b(D)}e^{(t-s)\ep\Delta}T_{\f{m_{\mu\nu}}{\phi_{\mu\nu}}}(\R (B(w,w)+\lnr\chi^{L}H)^{\mu},\R w^{\nu})\d s+ symmetric \quad terms\big]\nonumber\\
&\define&I_1+\cdots+I_4+symmetric \quad terms.
\een

Note that we denote $\R^{2}=\R$ as they have the same property (they are both $L^p(1<p<\infty)$ multiplier). It is also worthy to remark that the operator $e^{isb(D)}\hat{w}$ is well defined as $\hat{w}$ is supported on the low frequency region. 
For notational brevity, we shall not distinguish $m_{\mu,\nu}$ (just write them as $m$) and ignore the summation on $\mu,\nu$.
Therefore, in the following, we will write $I_1-I_4$ as follows:
\beno
I_1&=&T_{\f{m}{\phi}}(\R w(t),\R w(t)),\quad I_2=-T_{\f{m}{\phi}}(\R w_0,\R w_0),\nonumber\\
I_3&=&-\int e^{-i(t-s)b(D)}e^{(t-s)\ep\Delta} T_{\f{m}{\phi}} (\R B(w,w),\R w)\d s,\nonumber\\
I_4&=&-\int e^{-i(t-s)b(D)}e^{(t-s)\ep\Delta} T_{\f{m}{\phi}} (\R\lnr\chi^{L}H,\R w)\d s.\nonumber\\
\eeno
\subsection{Decay estimate and weighted  estimate}
\subsubsection{Estimate of $K_1$ and boundary terms $I_1,I_2$}   
 
 We begin with the decay estimate of $K_1$. Since $w_0=(Q^{-1}\chi^L U_0)_1\approx \R \f{\lnr}{|\na|}\vr_0+i\R^{*}u_0,$ we have by the dispersive estimate \eqref{dispersive}
\beno
\|
|\na|^{\f{1}{2}} e^{-itb(D)}e^{\ep t\Delta}w_0\|_{W^{1,\infty}}
\lesssim \||\na|^{\f{1}{2}}w_0\|_{W^{3,1}}\lesssim\|(\vr_0,u_0,\na\varphi_0)\|_{W^{\f{7}{2}+\delta,1}}
\eeno
Note that the last inequality in the above arises from the fact that $\R \hdb_{j}$ is $L^1$ multiplier for any $j\in \mathbb{Z}$.
Next, for the decay estimate for $\R I_2,$ we take benefits of the Sobolev embedding, dispersive estimate \eqref{dispersive}, bilinear estimate \eqref{bilinear estimate} (use $2_{+}=\f{9}{4}-\delta$) to get:
\beno
&&\||\na|^{\frac{1}{2}}\R e^{itb(D)}\tilde{\chi}^{L}e^{\ep t \Delta }T_{\f{m}{\phi}}(\R w(0),\R w(0))\|_{W^{1,\infty}}\lesim\ltr^{-1}
\sum_{j\in\mathbb{Z}}2^{\f{1}{2}j}\langle2^j\rangle^3\|T_{\f{m}{\phi}}(\R w(0),\R w(0))\|_{L^1}\\
 && \qquad\qquad\qquad\qquad\qquad\lesssim \ltr^{-1}\|T_{\f{m}{\phi}}(\R w(0),\R w(0))\|_{W^{\f{7}{2}+\delta,1}}
 \lesssim\ltr^{-1}\|w(0)\|_{H^{\f{23}{4}}}\|w(0)\|_{H^2}.
 \eeno
As for the decay estimate of $\R I_1$, it is helpful to establish the following lemma:
\begin{lem}\label{high regularity decay}
For any $2\leq p< \infty$, and $k<\f{2}{p}N'+(1-\f{2}{p})\f{3}{2}$, we have for every $t\in[0,T),$
\beq\label{inter1}
\|U^L(t)\|_{W^{k,p}}\leq (1+t)^{-(1-\f{2}{p})}\|U\|_{X_T}.
\eeq
similarly, if $k<\f{2}{p}N+(1-\f{2}{p})\f{3}{2}$, then
\beq\label{inter2}
\|U^L(t)\|_{W^{k,p}}\leq (1+t)^{-(1-\f{2}{p})+\f{2}{p}\delta}\|U\|_{X_T}.
\eeq
\end{lem}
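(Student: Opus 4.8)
The plan is to interpolate between the known $L^\infty$-decay of the low-frequency profile (which is encoded in the first term of the $X_T$ norm via the weighted quantity $\|xe^{itb(D)}w\|$) and the high Sobolev bound on $U^L$ (the third and sixth terms of $\|U\|_{X_T^\sigma}$), all against the dispersive estimate of Lemma \ref{lem dispersive}. Concretely, recall $U^L = \chi^L(D)U \approx \R\tilde\chi^L w$ with $w$ and its conjugate, so it suffices to bound $\|w(t)\|_{W^{k,p}}$. Write $w = e^{itb(D)} e^{\ep t\Delta} \alpha^{L}$ where, after re-inserting the dissipation, $w = e^{-itb(D)} e^{\ep t\Delta}\big(e^{itb(D)}e^{-\ep t\Delta}w\big)$; set $\alpha = e^{itb(D)}e^{-\ep t\Delta}w$ (or just $e^{itb(D)}w$ as in the norm — the $e^{\ep t\Delta}\chi^L$ factor is an $L^p$-multiplier of norm $\lesssim e^{\kpz t}$, so one should be slightly more careful and rather note $\|w\|_{W^{k,p}}\lesssim \|e^{itb(D)}w\|_{W^{k,p}}$-type bounds are what is actually controlled). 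The clean route: apply Lemma \ref{lem dispersive} with exponent $p$ to get
\beno
\|w(t)\|_{W^{k,p}} \lesssim (1+t)^{-(1-\f2p)}\|e^{itb(D)}w(t)\|_{W^{k+2(1-\f2p),p'}},
\eeno
and then estimate the right-hand side by splitting $\|e^{itb(D)}w\|_{W^{s,p'}}$ (with $s = k+2(1-\f2p)$) through a Littlewood-Paley decomposition.

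For that last step I would interpolate dyadically: for each block $\hdb_j$, $\|\hdb_j e^{itb(D)}w\|_{L^{p'}}$ is controlled by H\"older/Bernstein between $\|\hdb_j e^{itb(D)}w\|_{L^1}$ and $\|\hdb_j e^{itb(D)}w\|_{L^2}$. The $L^1$ piece is bounded using the weighted norm: $\|g\|_{L^1}\lesssim \|g\|_{L^2}^{1-\theta}\|xg\|_{L^2}^{\theta}$-type inequalities (in 2d, $\|g\|_{L^1}\lesssim \|g\|_{L^2}^{1/2}\|\,|x|^{1+}g\|_{L^2}^{1/2}$), and $\|x e^{itb(D)}w\|_{H^4}$ (equivalently in $W^{4,2/(1-\delta)}$) is exactly the weighted term appearing in $\|U\|_{X_T}$. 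The $L^2$ piece is just $\|w\|_{H^s} = \|U^L\|_{H^s}$ which is bounded by $\|U\|_{X_T}$ when $s\le N'$ (giving \eqref{inter1}) or by $(1+t)^{\delta}\|U\|_{X_T}$ when $s\le N$ (giving the extra $(1+t)^{\f2p\delta}$ loss in \eqref{inter2}, since the interpolation weight on the $L^2$-part is $1-\f2p$... wait, it should be the $L^2$ occurrence that carries weight $\tfrac2p$ after duality — one must track which exponent multiplies the $\ltr^{-\delta}$ factor so that it comes out as $\f2p\delta$). Summing the dyadic pieces converges precisely under the stated constraint $k < \f2p N' + (1-\f2p)\f32$ (resp. with $N$), because the number of derivatives one can afford in $L^{p'}$ after the dispersive gain is the interpolation of $N'$ (available in $L^2$) and $\f32$ (available in $L^1$, coming from the $H^{\f32+}$-type control implicit in the weighted $H^4$ norm combined with $|x|$-weights — the $\tfrac32$ is $4 - 2\cdot\tfrac12 - $ a bit, reflecting the $2(1-\f2p)$ loss from dispersion when $p\to\infty$).

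The main obstacle is bookkeeping the regularity budget: one must verify that the weighted norm $\|xe^{itb(D)}w\|_{W^{4,2/(1-\delta)}}$ genuinely furnishes $L^1$-control of $e^{itb(D)}w$ up to $\approx \f32$ derivatives (via the 2d inequality $\|g\|_{L^1}\lesssim \||x|^{1+\delta}g\|_{L^2}$ applied to frequency-localized pieces, losing $2\cdot(1-\f2p)$ derivatives through Lemma \ref{lem dispersive}), and that the interpolation exponents line up so the final derivative threshold is exactly $\f2p N' + (1-\f2p)\f32$ and the time-decay exponent is exactly $-(1-\f2p)$ in \eqref{inter1} with only the advertised $\f2p\delta$ loss in \eqref{inter2}. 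A secondary subtlety is the $e^{\ep t\Delta}\chi^L$ factor hidden in passing between $w$ and $e^{itb(D)}w$: here one uses that on the low-frequency support $e^{\ep t\Delta}\chi^L$ is an $L^p$-bounded multiplier with norm $\lesssim_{\kpz} 1$ uniformly in $t$ and $\ep$ (since $\ep|\xi|^2 \le 3\kpz$ there), so it costs nothing — this should be stated explicitly. Once these are in place the estimate is a routine interpolation, so I would present the $p=\infty$ and a generic $2<p<\infty$ computation and leave the endpoint $p=2$ (which is immediate from the definition of $X_T$) to the reader.
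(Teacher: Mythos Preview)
Your route is considerably more circuitous than the paper's, and the detour has a real gap. The paper does not invoke the dispersive estimate (Lemma~\ref{lem dispersive}) or the weighted profile norm at all in this proof; it simply interpolates between two components of $\|U\|_{X_T}$ that are already on the shelf. After splitting $U^L=\Delta_{-1}U^L+\sum_{j\ge0}\Delta_j U^L$, for each $j\ge0$ one writes
\[
2^{kj}\|\Delta_j U^L\|_{L^p}\lesssim \big(2^{N'j}\|\Delta_j U^L\|_{L^2}\big)^{2/p}\big(2^{\f32 j}\|\Delta_j U^L\|_{L^\infty}\big)^{1-\f2p}\cdot 2^{j(k-\f2pN'-(1-\f2p)\f32)},
\]
bounds the $L^2$ factor by the $H^{N'}$ component of $X_T$ and the $L^\infty$ factor by the \emph{first} component $\||\na|^{1/2}\lnr Q^{-1}U^L\|_{L^\infty}\lesssim\ltr^{-1}\|U\|_{X_T}$, and sums over $j$ under the stated constraint on $k$. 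The number $\tfrac32$ is exactly the order of the multiplier $|\na|^{1/2}\lnr$ on blocks $j\ge0$ (namely $2^{j/2}\langle 2^j\rangle\sim 2^{3j/2}$); it has nothing to do with the $W^{4,\cdot}$ weighted norm or the $2(1-\tfrac2p)$ dispersive loss as you conjecture. For \eqref{inter2} one swaps $H^{N'}$ for $H^N$, whose bound carries $\ltr^{\delta}$, and since the $L^2$ factor enters with exponent $\tfrac2p$ the loss is precisely $\ltr^{(2/p)\delta}$. The block $\Delta_{-1}$ is handled separately by the $\|U^L\|_{L^p}$ decay established just after the definition of the $X_T$ norm.

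The gap in your plan is the $L^1$ leg of the interpolation on the profile side. In two space dimensions a \emph{single} power of $x$---which is all that $\|xe^{itb(D)}w\|_{W^{4,2/(1-\delta)}}$ provides---does not yield $L^1$ control: the H\"older pairing $\|g\|_{L^1}\le\|\langle x\rangle^{-1}\|_{L^{q}}\|\langle x\rangle g\|_{L^{q'}}$ fails for every $q'\ge 2$ since $\langle x\rangle^{-1}\notin L^q(\mathbb{R}^2)$ when $q\le 2$, and the inequality you quote in fact requires weight $|x|^{1+}$. At best your approach reaches $L^{p'}$ for $p'$ bounded away from $1$ (equivalently $p$ bounded above by roughly $2/\delta$), and even there the derivative accounting does not reproduce the clean threshold $\tfrac2pN'+(1-\tfrac2p)\tfrac32$. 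The moral: the $L^\infty$ decay of $U^L$ is already \emph{part of} $\|U\|_{X_T}$---use it directly rather than reconstituting it from the weighted profile.
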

\begin{proof}
We only detail the proof of \eqref{inter1} since \eqref{inter2} can be treated in the same manner. We shall use decomposition $U^L=\Delta_{-1}U^{L}+\sum_{j\geq 0}\Delta_j U^{L}$.
On one hand,  the low frequency can be dealt with as follows:
\beqs
\|\Delta_{-1}U^{L}\|_{W^{k,p}}\lesssim\|U^L\|_{L^p}\lesssim(1+t)^{-(1-\f{2}{p})}\|U\|_{X_T}.
\eeqs
 On the other hand, the high frequency term can be controlled by 
 interpolation and the definition of $X_T$ norm:
 \beno
 \sum_{j\geq 0}\|\Delta_j U^{L} \|_{W^{k,p}}&\lesssim& \sum_{j\geq 0} 2^{kj}\|\Delta_j U^{L}\|_{L^2}^{\f{2}{p}}\|\Delta_j U^L\|_{L^{\infty}}^{1-\f{2}{p}}\\
 &\lesssim&\sum_{j\geq 0}2^{j(k-N'\f{2}{p}-\f{3}{2}(1-\f{2}{p}))}(2^{N'j}\|\Delta_j U^{L}\|_{L^2})^{\f{2}{p}}
\||\na|^{\delta}\lnr\Delta_j U^L\|_{L^{\infty}}^{1-\f{2}{p}}\\
 &\lesssim&(1+t)^{-(1-\f{2}{p})}\|U\|_{X_T}.
 \eeno
\end{proof}
 In light of \eqref{bilinear estimate}, 
 \eqref{inter1} 
 and condition  $\f{19}{4}<\f{2}{3}\cdot 7+\f{1}{2}$, one has that
 \ben\label{decay for I1}
 &&\|\R |\na|^{\f{1}{2}}T_{\f{m}{\phi}}(\R w(t),\R w(t))\|_{W^{1,\infty}}\lesssim\|T_{\f{m}{\phi}}(\R w(t),\R w(t))\|_{H^{\f{5}{2}+\delta}}\nonumber\\
 && \qquad\lesssim\|w(t)\|_{W^{\f{19}{4},3}}\|\R w\|_{W^{2,6}}
 \lesssim(1+t)^{-1}\|U\|_{X_T}^{2}.
 \een


We are now committed to the weighted estimate. Let us first detail the estimate of boundary terms: $x\R e^{itb(D)}(I_1,I_2)$. Using
 \eqref{reduction} again, it suffices to estimate $\|x e^{itb(D)}(I_1,I_2)\|_{H^{4+\delta}}$. 
 Denote $f=e^{itb(D)}w$ the profile of $w,$ one then writes
 \beno
&& xe^{itb(D)}T_{\f{m}{\phi}}(\R w(t),\R w(t))=xe^{it\Im\phi}T_{\f{m}{\phi}}(\R f(t),\R f(t))\\
& =&
-te^{itb(D)}T_{\f{m\p_{\xi}(\Im\phi)}{\phi}}(\R w(t),\R w(t))+ie^{itb(D)}T_{\partial_{\xi}(\f{m}{\phi})}(\R w(t),\R w(t))\\
 &&+e^{itb(D)}T_{\f{m}{\phi}}(e^{isb(D)}x\R f(t),\R w(t)).
 \eeno
 where $\Im{\phi}=b(\xi)\pm b(\xi-\eta)\pm b(\eta)$.
Thanks to \eqref{bilinear estimate},
 \eqref{inter1} and relation $\frac{25}{4}\leq \frac{16}{17}\cdot 7,$ the first term can be controlled as:
 \beno
 &&\|te^{itb(D)}T_{\f{m}{\phi}}(\R w(t),\R w(t))\|_{H^{4+\delta}}
 \lesssim t\|\R w\|_{W^{2,34}}\|\R w\|_{W^{\f{25}{4},\f{17}{8}}}\lesim \|U\|_{X_T}^{2}.
 \eeno
 The second term is easier, since it does not contain prefactor $t.$
 Moreover, the quadratic form  $T_{\partial_{\xi}(\f{m}{\phi})}$ admits the similar bilinear estimates as $T_{\f{m}{\phi}}.$
  The third term is much involved since one could not put the loss of derivative on the weighted term.
  We thus write:
  \beno
 && 
 \cF(e^{itb(D)}T_{\f{m}{\phi}}(e^{isb(D)}x\R f(t),\R w(t)))\\
  &=&\int e^{itb(\xi)}\f{m}{\phi}(\xi,\eta)e^{-isb(\xi-\eta)}\widehat{x\R f}(\xi-\eta)\widehat{\R w}(\eta)\chi_{\{\lxmer\leq \ler\}}\d\eta\\
  &&+\int e^{it\Im\phi(\xi,\eta)}\f{m}{\phi}(\xi,\eta)\partial_{\xi}\widehat{\R f}(\xi-\eta)\widehat{\R f}(\eta)\chi_{\{\lxmer> \ler\}}\d\eta\define I_{131}+I_{132},\\
  \eeno
By virtue of \eqref{bilinear estimate}, 
 \eqref{inter1} and condition $7.5\leq \f{2}{3}\cdot 11+\f{1}{2}$, one gets that:
  \beno
  \|I_{131}\|_{H^{4+\delta}}&\lesssim&\|T_{\f{m}{\phi}\chi_{\{\lxmer\leq \ler\}}}(e^{itb(D)}x\R f(t),\R w(t))\|_{H^{4+\delta}}\\
 &\lesssim& \|e^{itb(D)}x\R f(t)\|_{W^{2,\f{2}{1-2\delta}}}\|\R w\|_{W^{\f{25}{4},\f{1}{\delta}}}\\
 &\lesssim&\ltr^{2\delta}\|\langle x\rangle f\|_{W^{3,\f{2}{1-\delta}}}\|w\|_{W^{7.5,3}}\lesssim\ltr^{-(\f{1}{3}-3\delta)}\|U\|_{X_T}^{2},
  \eeno
For the term $I_{132}$, thanks to identity $\p_{\xi}\widehat{\R f}(\xi-\eta)=-\p_{\eta}\widehat{\R f}(\xi-\eta)$, one could integrate by parts in $\eta$ to rewrite it as:
\beno
I_{132}=T_{\f{m}{\phi}\chi_{\{\lxmer> \ler\}}}(\R w(t),e^{itb(D)}x\R f(t))+T_{\p_{\eta}(\f{m}{\phi}\chi_{\{\lxmer> \ler\}})}(\R w(t),\R w(t))\\
\qquad+ it T_{\f{m}{\phi}\chi_{\{\lxmer> \ler\}}\p_{\eta}(Im\phi)}(\R w(t),\R w(t)).
\eeno
Nevertheless, the first term in the above can be estimated exactly as $I_{311}$, the last two terms can be treated in the same manner as that of $\|I_1\|_{H^{2+\delta}}$, see \eqref{decay for I1}. 
 
 We are now in position to show the estimate of $xe^{itb(D)}I_2$. By definition,
 \beno
 xe^{itb(D)}I_2&=&xe^{\ep t\Delta}T_{\f{m}{\phi}}(\R w_0,\R w_0)\\
 &=&2e^{\ep t\Delta}\ep t\na T_{\f{m}{\phi}}(\R w_0,\R w_0)+e^{\ep t\Delta}T_{\f{m}{\phi}}(x \R w_0,\R w_0)+iT_{\partial_{\xi}(\f{m}{\phi})}(\R w_0,\R w_0).
 \eeno
Let us focus on the estimate of the first two terms, since the last one is easier.
Owing to the bilinear estimate \eqref{bilinear estimate}, one has 
 \beno
&& \|e^{\ep t\Delta}\ep t\na T_{\f{m}{\phi}}(\R w_0,\R w_0)\|_{H^{4+\delta}}+\|e^{\ep t\Delta}T_{\f{m}{\phi}(\chi_{\{\lxmer\leq \ler\}})}(x \R w_0,\R w_0)\|_{H^{4+\delta}}\\
 &\lesssim& \ep^{\f{1}{2}}t\|\cF^{-1}(e^{-\ep t|\xi|^{2}}\ep^{\f{1}{2}}\xi)\|_{L^2}\|T_{\f{m}{\phi}}(\R w_0,\R w_0)\|_{W^{4+\delta,1}}+\|T_{\f{m}{\phi}(\chi_{\{\lxmer\leq \ler\}})}(x \R w_0,\R w_0)\|_{H^{4+\delta}}\\
 &\lesssim&\|T_{\f{m}{\phi}}(\R w_0,\R w_0)\|_{W^{4+\delta,1}}+\|x\R (\vr_0,\varphi_0,u_0)\|_{W^{2,3}}\|\R (\vr_0,\varphi_0,u_0) \|_{W^{\f{25}{4},6}}\\
 &\lesssim&\|\R (\vr_0,\varphi_0,u_0)\|_{H^2}\|\R (\vr_0,\varphi_0,u_0) \|_{H^{\f{25}{4}}}+\|\langle x\rangle (\vr_0,\varphi_0,u_0) \|_{H^{\f{7}{3}}}\|(\vr_0,\varphi_0,u_0) \|_{H^7}\\
 &\lesssim&\|\langle x\rangle (\vr_0,\varphi_0,u_0) \|_{H^{\f{7}{3}}}\|(\vr_0,\varphi_0,u_0) \|_{H^7}.
 \eeno
Note that in the above, the following fact has been used:
 \ben
 &&\|\cF^{-1}(e^{-\ep t|\xi|^{2}}\ep^{\f{1}{2}}\xi)\|_{L^2}\lesssim \ep^{-\f{1}{2}}t^{-1},\label{elementary}\\
 &&\|x\R f\|_{L^3}\lesssim \|xf\|_{L^3}+\||\na|^{-1}f\|_{L^3}\lesssim \|xf\|_{H^{\f{1}{3}}}+\|f\|_{L^{\f{6}{5}}}\lesssim\|\langle x\rangle f\|_{H^{\f{1}{3}}}.\nonumber
 \een
 The estimate of $T_{\f{m}{\phi}(\chi_{\{\lxmer> \ler\}})}(x\R w_0,w_0)$ can be obtained by integrating by parts in $\eta$ as before.
 \subsubsection{Estimate of $K_3$ and $\R I_4$}
We begin with the decay estimate of $K_3$ (which is defined in \eqref{formula of w}). 
By dispersive estimate, $\|\R K_3\|_{L^{\infty}}$ can be estimated as follows:
\beno
 &&\|\R\int_{0}^{t}e^{i(t-s)\lambda_{-}(D)}|\na|^{\f{1}{2}}\lnr \lnr\chi^{L}H(s)\d s\|_{L^{\infty}} \\
 &\lesssim& \int_{0}^{t}(1+t-s)^{-1}\sum_{j\in\mathbb{Z}}2^{\f{1}{2}j}\langle 2^j\rangle^{4}\|\dot{\Delta}_{j} H(s)\|_{L^1}\d s\lesssim \int_{0}^{t}(1+t-s)^{-1}\|H(s)\|_{B_{1,2}^{5}}\d s\\
 &\lesssim& \int_{0}^{t}(1+t-s)^{-1}\|U^h\|_{H^{5}}\|U\|_{H^{5}}\d s \lesssim \int_{0}^{t}(1+t-s)^{-1}(1+s)^{-\alpha}\|U\|_{X_T}^{2}\d s\lesssim (1+t)^{-1}\|U\|_{X_T}^{2}
\eeno
We now prove the weighted estimate of $K_3.$ According to \eqref{reduction}, it suffices to estimate $\|x(3)\|_{H^{4+\delta}}$.
Rewrite $H=\R U^h \R U^{L}+\R U^{h}\R U^{h}\define H_1+H_2,$ one has that by the definition of $W:W=Q^{-1}U^{L}$,
\ben
&&x\int_{0}^{t}e^{isb(D)}e^{\ep (t-s)\Delta}\lnr\chi^{L}H_1 \d s\nonumber\\
&=&\cF^{-1}\big(\int_{0}^{t}\int s(b'(\xi)\pm b'(\xi-\eta))+2\ep(t-s)i\xi) e^{isb(\xi)}e^{-\ep(t-s)|\xi|^{2}}\lxr \widehat{\R U^{h}}(\eta) \widehat{\R U^L}(\xi-\eta)\d \eta \d s\nonumber\\
&&+\int_{0}^{t}e^{isb(\xi)}e^{-\ep(t-s)|\xi|^{2}}\partial_{\xi}\big(\lxr\chi^{L}(\xi)\big)\hat{H_1}(\xi)\d s \nonumber\\
&&+\int_{0}^{t}e^{is(b(\xi)\pm b(\xi-\eta))}e^{-\ep(t-s)|\xi|^{2}}\lxr\chi^{L}(\xi)\widehat{\R U^h}(\eta)\partial_{\xi}\widehat{ Q\tilde{\chi}^{L}\R f }(\xi-\eta)\d \eta\d s\big)\nonumber\\
&\define&(3)_{11}+(3)_{12}+(3)_{13}
\een
For $(3)_{11},$ by virtue of   \eqref{elementary}, the fact $b'\chi^{L}(D)$ is $L^p  (1<p<\infty)$ multiplier as well as Young's inequality:
\beno
\|(3)_{11}\|_{H^{4+\delta}}&\lesssim& \int_{0}^{t} s\|\R U^{h} \R U^{L}\|_{H^{5+\delta}}\d s+\izt \|\R U^h \R U^L\|_{W^{5+\delta,1}}\d s\\
&\lesssim& \int_{0}^{t} s \|\R U^h\|_{H^{5+\delta}} \|\R U^L\|_{W^{1,4}}\d s+\int \|\R U^h\|_{H^5}\|\R U^L\|_{L^2}\d s\\
&\lesssim& \int_{0}^{t} s\lsr^{-(2-5\delta+\f{1}{2})}\d s\|U\|_{X_T}^{2}+\int \lsr^{-(2-5\delta)}\|U\|_{X_T}^{2}\lesssim\|U\|_{X_T}^{2}.
\eeno
The estimate of $(3)_{12}$ is similar, we thus skip it.
For $(3)_{13}$, by 
\eqref{bilinear estimate}, \eqref{Lp bounds1}, 
\beno
&&\|(3)_{13}\|_{H^{4+\delta}}\lesssim \int_{0}^{t} \|\R U^h\|_{W^{5+\delta,6}} \|e^{isb(D)}xQ\tilde{\chi}^{L}\R f\|_{L^3}\d s\\
&\lesim& \int_{0}^{t}\|\R U^h\|_{H^{6}}\lsr^{\f{1}{3}}\|xQ\tilde{\chi}^{L}\R f\|_{W^{\f{1}{2},3}}\d s
\lesim \int_{0}^{t} \lsr^{-(2-5\delta-\f{1}{3}})\|U\|_{X}^2\d s\lesim \|U\|_{X_T}^2.
\eeno
Note that in the above, we have also used the fact
$xQ\tilde{\chi}^{L}\R f\approx |\na|^{-1}\R f+\R (xf)$ which gives:
\beno
\|xQ\tilde{\chi}^{L}\R f\|_{W^{\f{1}{2},3}}\lesim \|\R f\|_{W^{\f{1}{2},\f{6}{5}}}+\|xf\|_{W^{
\f{1}{2},3}}\lesim \|f\|_{H^{\f{1}{2}}}+\|xf\|_{W^{\f{5}{6},\f{2}{1-\delta}}}.
\eeno
For the case of $H_2$, since in the original definition $H_2=\vr^{h}u^{h}$ or $(u^{h})^{2}$, one can split it into three terms:
\ben
&&x\int_{0}^{t}e^{isb(D)}e^{\ep (t-s)\Delta}\lnr\chi^{L}H_2 \d s\nonumber\\
&=&\int_{0}^{t}\int sb'(D)+2(t-s)\ep i\na) e^{isb(D)}e^{\ep(t-s)\Delta}\lxr H_2 \d s\nonumber\\
&&+\int_{0}^{t}e^{isb(D)}e^{\ep(t-s)\Delta}\cF^{-1}\big(\partial_{\xi}(\lxr\chi^{L})\hat{H_1}(\xi)\big)\d s 
+\int_{0}^{t}e^{isb(D)}e^{\ep(t-s)\Delta}\lnr \chi^{L}(x\vr^{h}u^{h})\d s\nonumber\\
&\define&(3)_{21}+(3)_{22}+(3)_{23}
\een
The estimates of $(3)_{21},(3)_{22}$ are similar to that of $(3)_{11},(3)_{12}$ and thus can be omitted.
For $(3)_{23}$, one uses Lemma \ref{weighted product}, the estimate \eqref{weighted high frequency} to get:
\beno
\|\lnr\chi^{L}(x\vr^{h}u^{h})\|_{H^{4+\delta}}&\lesim&\|xu^{h}\|_{L^2}\|\vr^h\|_{H^{6+2\delta,\infty}}+
\|x\vr^{h}\|_{L^2}\|u^h\|_{H^{6+2\delta}}+
\|\vr^{h}\|_{H^{5+2\delta}}\|u^h\|_{H^{5+2\delta}}\\
&\lesim& \lsr^{-(2-7\delta)}(\|U\|_{X_T}^{2}+\|x(\vr_0,u_0,\na\varphi_0)\|_{L^2}^2).
\eeno

 \subsubsection{Estimate of $I_4$}
 
One first observes that
 \beno
 &&\||\na|^{\f{1}{2}}\lnr \R I_4\|_{L^{\infty}}\lesssim \sum_{j\in\mathbb{Z}}2^{\f{1}{2}j}\langle 2^{j}\rangle\|\dot{\Delta}_j I_4\|_{L^{\infty}}\\
 &\lesssim&\ltr^{-1} \sum_{j\in\mathbb{Z}}2^{\f{1}{2}j}\langle 2^{j}\rangle^{3}\|\dot{\Delta}_j e^{itb(D)}I_4\|_{L^{1}}
 \lesssim\ltr^{-1}\|e^{itb(D)}I_4\|_{W^{\f{7}{2}+\delta,1}}
 \eeno
Applying Lemma \ref{Lp bounds for eitb(D)} for $p=1$, we get:
\beno
&&\|e^{itb(D)}I_4\|_{W^{\f{7}{2}+\delta,1}}\lesim \izt s \|T_{\f{m}{\phi}}(\lnr \R \chi^L H, \R w)\|_{W^{\f{9}{2}+\delta,1}}\d s\\&\lesim& \izt s \|\lnr \R\chi^L H\|_{H^{\f{27}{4}}}\|w\|_{H^{\f{27}{4}}}\d s
\lesim\izt s\lsr^{3-7\delta}\d s\|U\|_{X}^3\lesim\|U\|_{X_T}^3.
\eeno
where the following crude estimate has been used
\beq\label{decay for $H$}
\|\lnr \chi^{L}\R H\|_{H^{N-3}}
\lesssim \|H\|_{H^{N-2}}\lesssim\|U\|_{L^{\infty}}\|U^{h}\|_{H^{N-2}}\lesssim \lsr^{-(3-7\delta)}\|U\|_{X_T}^{2}.
\eeq
We are now devoted to proving the estimate of $\|xe^{itb(D)}I_4\|_{W^{4,\f{2}{1-\delta}}}$. As before, let us write
 \beno
 xe^{itb(D)}I_4&=&\cF^{-1}\big(\int_{0}^{t}s(b'(\xi)\pm b'(\xi-\eta))+2\ep(t-s)i\xi e^{is(b(\xi)\pm b(\xi-\eta))}e^{-\ep (t-s)|\xi|^{2}}\\
&&\qquad \qquad\qquad\qquad\qquad\qquad\f{m}{\phi}(\xi,\eta)\chi^{L}(\eta) \ler \widehat{\R H}(\eta) \widehat{\R f}(\xi-\eta)\d \eta\d s\\
 &&+\int_{0}^{t}e^{is(b(\xi)\pm b(\xi-\eta))}e^{-\ep (t-s)|\xi|^{2}}\partial_{\xi}(\f{m}{\phi}) \chi^{L}(\eta) \ler \widehat{\R H}(\eta) \widehat{\R f}(\xi-\eta)\d \eta\d s \\
  &&+\int_{0}^{t}e^{is(b(\xi)\pm b(\xi-\eta))}e^{-\ep (t-s)|\xi|^{2}}\f{m}{\phi}(\xi,\eta)\chi^{L}(\eta) \ler \widehat{\R H}(\eta) \partial_{\xi}\widehat{\R f}(\xi-\eta)\d \eta\d s \big)\\
  &\define& J_{41}+J_{42}+J_{43}.
 \eeno
The first term $J_{41}$ can be dealt with similarly as the term $ (3)_{11}$. 
Indeed, by using Lemma \ref{Lp bounds for eitb(D)}, Lemma \ref{lem bilinear estimate} and \eqref{decay for $H$}, one obtains
\beno
&&\|J_{41}\|_{W^{4,\f{2}{1-\delta}}}\lesssim\int_{0}^{t}\lsr^{1+\delta}\|T_{\f{m}{\phi}}(\lnr \chi^{L}\R H,\R w)(s)\|_{W^{4+2\delta,\f{2}{1-\delta}}}\d s
\\&&\qquad\qquad\qquad\qquad+\izt \lsr^{\delta} \|T_{\f{m}{\phi}}(\lnr \chi^{L}\R H,\R w)(s)\|_{W^{4+2\delta,\f{2}{2-\delta}}}\d s\\
&\lesssim& \int_{0}^{t} 
\lsr^{1+\delta} \|\R w\|_{W^{\f{25}{4},\infty}}\|\lnr \chi^{L}\R H\|_{W^{\f{25}{4},\f{2}{1-\delta}}}\d s+\izt \lsr^{\delta}\|\R w\|_{W^{\f{25}{4},\f{2}{1-\delta}}}\|\lnr \chi^{L}\R H\|_{H^{\f{25}{4}}}\d s\\
&\lesssim& \int_{0}^{t} \lsr^{1+\delta} 
\lsr^{-(3-7\delta)}
\d s\|U\|_{X}^3\lesssim\|U\|_{X_T}^{3}.
\eeno
 $J_{42}$ can be estimated in the same manner, we thus do not detail it.
For $J_{43}$, one splits it into two terms:
\beno
J_{43}=\cF^{-1}\big(\int(\chi_{\lxmer\leq\ler}+\chi_{\lxmer\geq\ler})\cdots\d s\big)\define J_{431}+J_{432}.
\eeno
The estimate of $J_{431}$ is easy since we can put all the derivatives onto $H$. Indeed, by 
\eqref{decay for $H$}, Lemma \ref{Lp bounds for eitb(D)}, 
and the Sobolev embedding, one obtains that
\beno
\|J_{431}\|_{W^{4,\f{2}{1-\delta}}}&\lesssim& \int_{0}^{t}\|\lnr \chi^{L}H\|_{W^{\f{25}{4},\f{6}{1-3\delta}}}\|e^{isb(D)}x\R f\|_{W^{2,3
}}\d s\\
&\lesssim&\int_{0}^{t}\lsr^{-(3-7\delta)}\lsr^{\f{1}{3}   
}\|\langle x\rangle f\|_{W^{3,\f{2}{1-\delta}}}\d s\|U\|_{X_T}^{2}.
\lesssim\|U\|_{X_T}^{3}
\eeno
For $J_{432}$, we use the identity $\partial_{\xi}\widehat{\R f}(\xi-\eta)=-\partial_{\eta}\widehat{\R f}(\xi-\eta)$ 
to integrate by parts in $\eta$. Eventually, we get the terms like $J_{41},J_{42},J_{431}$ as well as the term:
\begin{equation}\label{last}
\int_{0}^{t} e^{isb(D)}e^{\ep (t-s)\Delta}T_{\f{m
\chi_{\{\lxmer\geq\ler\}}}{\phi}}(x\lnr\chi^{L}(D)\R H, \R w)\d s.
\end{equation}
Besides,
\beno
x\lnr \chi^{L}(D)\R H&=&\f{\na}{\lnr}\chi^L \R H+\lnr\big((i\chi^{L}
)'(D)
\R H+\chi^{L}(D)x\R H\big)\\&\approx& \f{\na}{\lnr}\chi^L \R H+\lnr\big((i\chi^{L}
)'(D)\R H +\chi^{L}(D)|\na|^{-1}H+\chi^{L}(D)\R xH\big).
\eeno
To continue, the following estimate for $H$ shall be useful:
\begin{prop}\label{lem when estimate xR H}
\beno
\|(\chi^{L})'(D)\R H +\chi^{L}(D)|\na|^{-1}H\|_{W^{4,\f{2}{1-\delta}
}}\lesim \ltr^{-(2-5\delta)}\|U\|_{X_T}^{2},\\
\|\chi^{L}\R xH\|_{L^{2}}\lesssim \ltr^{-(1-4\delta)}(\|U\|_{X_T}^{2}+\|\langle x\rangle(\vr_0,u_0)^{h}\|_{L^{2}}^{2}).
\eeno
\end{prop}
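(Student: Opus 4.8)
The plan is to recall that $H\approx \R U^L\R U^h+\R U^h\R U^h$ and exploit the frequency separation together with the decay information already encoded in $\|U\|_{X_T}$, namely $\|U^h\|_{H^{2\sigma+N-2}}\lesssim\ltr^{-\alpha}\|U\|_{X_T}$ with $\alpha=2-5\delta$, the intermediate decay $\ltr^{1-3\delta}\|U^m\|_{H^{2\sigma+N-1}}$, the low-frequency decay from Lemma \ref{high regularity decay}, and the weighted high-frequency bound \eqref{weighted high frequency}. For the first estimate, I would note that $(\chi^L)'(D)\R H$ and $\chi^L(D)|\nabla|^{-1}H$ are both localized (by the factor $(\chi^L)'$ resp. by the frequency support of $\chi^L$) on the region $\ep|\xi|^2\lesssim\kpz$, where the symbols $(\chi^L)'(\xi)\R(\xi)$ and $\chi^L(\xi)|\xi|^{-1}$ behave (for the purpose of $W^{k,p}$ estimates with $p<\infty$) like bounded, or respectively $|\nabla|^{-1}$-type, Fourier multipliers; in particular $\chi^L(D)|\nabla|^{-1}$ maps $L^{6/(5-3\delta)}\to W^{4,2/(1-\delta)}$ type bounds by Hardy--Littlewood--Sobolev. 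Hence $\|(\chi^L)'(D)\R H+\chi^L(D)|\nabla|^{-1}H\|_{W^{4,2/(1-\delta)}}\lesssim \|H\|_{W^{4+\delta,4/(3-2\delta)}}+\|H\|_{H^{4+\delta}}$, say, and I would close using the tame/Sobolev product estimates: writing $H=\R U^L\R U^h+\R U^h\R U^h$ and placing $U^h$ (which carries the decay $\ltr^{-\alpha}$ in every relevant Sobolev norm) in the high-regularity slot and $U^L$ in the $W^{1,4}$ or $W^{1,1/\delta}$ slot via Lemma \ref{high regularity decay}. Both the low$\times$high and high$\times$high pieces then produce the bound $\ltr^{-(2-5\delta)}\|U\|_{X_T}^2$.

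For the second estimate, I would write $xH = x\R U^L\,\R U^h+\R U^L\,x\R U^h+2x\R U^h\,\R U^h$ after the Leibniz rule, and push the weight onto individual factors using $x\R g\approx|\nabla|^{-1}g+\R(xg)$ so as not to lose derivatives (as in \eqref{reduction} and Lemma \ref{weighted product}). The term with $x\R U^h$ is controlled by $\|x U^h\|_{L^2}$ from \eqref{weighted high frequency}, which costs a factor $\ltr^{2\delta}$, times $\|U^L\|_{L^\infty}$ or $\|U^h\|_{H^{s}}$; the combination gives the slow-decay term $\ltr^{-(1-4\delta)}$ since $U^h$ still contributes $\ltr^{-\alpha}$ and/or $U^L$ contributes $\ltr^{-1}$, and the weighted piece supplies the additive $\|\langle x\rangle(\vr_0,u_0)^h\|_{L^2}^2$. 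The term with $x\R U^L$ is handled by Lemma \ref{weighted product} together with the claim-type bound $\|x(\vr,u,\na\varphi)^L\|_{W^{1,4}}\lesssim\ltr^{1/2+\delta}\|U\|_{X_T}$ established at the start of Section 7, paired with $\|U^h\|_{H^{s}}\lesssim\ltr^{-\alpha}\|U\|_{X_T}$, whose product decays faster than $\ltr^{-(1-4\delta)}$; the purely high$\times$high weighted term is similar but better.

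The main obstacle is bookkeeping the derivative counts and the exponents: one must make sure that whenever the weight $x$ is moved onto a factor (thereby either producing a $|\nabla|^{-1}$, harmless at low frequency, or an $\R(x\cdot)$ that is only in $L^2$ for the high-frequency factor) there remain enough derivatives on the other factor for the tame estimate, and that the accumulated $\ltr^{2\delta}$ (from \eqref{weighted high frequency}) and $\ltr^{1/2+\delta}$ (from the weighted low-frequency bound) losses are absorbed by the genuine decay $\ltr^{-\alpha}$, $\ltr^{-1}$ of the companion factors so that the net rates $\ltr^{-(2-5\delta)}$ and $\ltr^{-(1-4\delta)}$ come out exactly. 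Since $\delta=\tfrac1{1000}$ there is ample room, so the verification is routine once the decomposition is fixed; I would present it schematically rather than term by term.
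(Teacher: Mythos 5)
Your proposal is correct and follows essentially the same route as the paper: the first bound uses exactly the same ingredients ($(\chi^L)'(D)\R$ is a bounded multiplier, Hardy--Littlewood--Sobolev handles $\chi^L(D)|\na|^{-1}$, and the $\ltr^{-(2-5\delta)}$ decay is supplied by the high-frequency factor through $\|U^h\|_{H^s}\lesssim\ltr^{-\alpha}\|U\|_{X_T}$), while the second bound rests on the same pairing of the weighted high-frequency estimate \eqref{weighted high frequency} (a $\ltr^{2\delta}$ loss) with the $\ltr^{-(1-2\delta)}$ decay of $\|(\vr,u)\|_{L^\infty}$, which is precisely the $\ltr^{-(1-4\delta)}$ numerology you identify. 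One small correction: there is no Leibniz rule for the weight, since $x(fg)=(xf)g=f(xg)$; the paper simply attaches $x$ to the high-frequency factor present in every term of $H$ (written in the original variables $\vr^{L}u^{h}+\vr^{h}u$, etc., with no interior Riesz transforms, so no commutation $x\R g\approx|\na|^{-1}g+\R(xg)$ is needed there), and your additional terms with the weight on the low-frequency factor are therefore unnecessary, though your treatment of them via the $\ltr^{\f12+\delta}$ weighted low-frequency bound would also close.
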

We postpone the proof of this proposition and finish firstly the estimate of term \eqref{last}. Indeed,  Lemma \ref{high regularity decay}, Proposition \ref{lem when estimate xR H}, combined with the Sobolev embedding
$W^{\f{41}{4},\f{21}{10}}
\hookrightarrow W^{\f{37}{4},\infty},
H^{8}\hookrightarrow W^{\f{25}{4},\infty}$ yield that:
\beno
&&\int_{0}^{t}\|\R w\|_{W^{\f{25}{4},\infty}}\|\chi^{L}\R H+\lnr\big((\chi^{L}
)'(D)\R H +\chi^{L}(D)|\na|^{-1}H\big)\|_{W^{2,\f{2}{1-\delta}}}\\
&\quad&\qquad\qquad+\|\R w\|_{W^{\f{37}{4},\infty}}\| \chi^{L}(D)\R xH\|_{L^2}\d s\\
&\lesssim&\int_{0}^{t}\lsr^{-(2-5\delta)}\|U\|_{X}^{3}+\lsr^{-(\f{1}{21}-\delta)}\lsr^{-(1-4\delta)}(\|U\|_{X}^{2}+\|\langle x\rangle(\vr_0,u_0)^{h}\|_{L^2}^{2})\d s\\
&\lesssim&\|\langle x\rangle(\vr_0,u_0,\na\varphi_0)^{h}\|_{L^2}^{2}+\|U\|_{X_T}^{2}+\|U\|_{X_T}^{3}.
\eeno


We are now left to prove Propostion \ref{lem when estimate xR H}.
\begin{proof}[Proof of Proposition \ref{lem when estimate xR H}]

Firstly,
since $(\chi^{L})'(D)\R$ is $L^2$ multiplier with norm $\lesssim \sqrt{\f{\ep}{\kpz}}$, one has by Sobolev embedding, the definition $H\approx \R U^{h}\R U$ that:
\beqs
\|(\chi^{L})'(D)\R H\|_{W^{4,\f{2}{1-\delta}}}\lesssim \sqrt{\f{\ep}{\kpz}}\|H\|_{W^{4,\f{2}{1-\delta}}}\lesssim\|U^h\|_{H^4}\|U\|_{H^1}\lesssim \ltr^{-(2-5\delta)}\|U\|_{X_T}^{2}.
\eeqs
Similarly, by Hardy-Littlewood-Sobolev inequality,
\beqs
\||\na|^{-1}H\|_{W^{4,\f{2}{1-\delta}
}}\lesim \|H\|_{W^{4,\f{2}{2-\delta}}}\lesim \|\R U^h\|_{H^4}\|\R U\|_{H^1}\lesim \ltr^{-(2-5\delta)}\|U\|_{X_T}^{2}.
\eeqs
We are now ready to estimate $\R x H.$ Notice that in the original definition $H=\rho^{L}u^{h}+\rho^{h}u$ or $u^{L}\cdot u^{h}+u^{h}\cdot u.$ 
Therefore, due to the Sobolev embedding $W^{1,\f{1}{\delta}}\hookrightarrow L^{\infty}$ and weighted Sobolev estimate for high frequency \eqref{weighted high frequency}:
\beno
&&\|\chi^{L}\R xH\|_{L^{2}}\lesim \|x(\vr,u)^{h}\|_{L^2}\|(\vr,u)\|_{L^{\infty}}\\
&\lesim&\ltr^{2\delta}(\|U\|_{X_T}+\|\langle x\rangle(\vr_0,u_0,\na\varphi_0)^{h}\|_{L^{2}})\ltr^{-(1-2\delta)}\|U\|_{X_T}\\
&\lesssim& \ltr^{-(1-4\delta)}(\|U\|_{X_T}^{2}+\|\langle x\rangle(\vr_0,u_0,\na\varphi_0)^{h}\|_{L^{2}}^{2}).
\eeno
\end{proof}
\begin{subsubsection}{Estimate of $\R I_3$}

In view of the definition of $B(w,w)$ (see \ref{def of B}), we could indeed consider $B(w,w)$ as $\lnr \R (\R w)^{2}$ for simplicity. Therefore, by recalling the definition of profile $f(s)=e^{isb(D)}w$, we see that $I_3=e^{-itb(D)}I_3'$ 
with $I_3'$ under the form
\beq\label{I3'}
I_3'(t)=\izt\int\int e^{is\tilde{\phi}(\xi,\eta,\si)}e^{-\ep(t-s)|\xi|^2}\f{m}{\phi}(\xi,\eta)\ler\R (\eta) \widehat{\R f}(\xi-\eta)\widehat{\R f}(\eta-\si)\widehat{\R f}(\si)\d\si\d\eta\d s
\eeq
where $\tilde{\phi}=b(\xi)\pm b(\xi-\eta)\pm b(\eta-\si)\pm b(\si)$.
For the weighted estimate, thanks to Bernstein's inequality and Young's inequality, one has that
\begin{align*}
   \|x\R I_3'\|_{W^{4,\f{2}{1-\delta}}}&\leq \sum_{k\in\mathbb{Z}}2^{4k_{+}}\|\hdb_{k}x\R I_3'\|_{L^{\f{2}{1-\delta}}}\lesim
\sum_{k\in\mathbb{Z}}2^{4k_{+}}\left(\||\na|^{-1}\hdb_{k}I_3'\|_{L^{\f{2}{1-\delta}}}+\|\hdb_{k}xI_3'\|_{L^{\f{2}{1-\delta}}}\right)\\ &\lesssim\sum_{k\in\mathbb{Z}}2^{4k_{+}}\left(2^{\delta k}\|\hdb_{k}I_3'\|_{L^1}+2^{\f{6}{5}\delta k}\|\hdb_{k}xI_3'\|_{L^{2_{\delta}}}\right)\\
&\lesssim \|I_3'\|_{H^5}+ \sup_{k\in\mathbb{Z}}2^{-\f{4}{5}\delta k^{-}}2^{(4+2\delta)k^{+}}\left(\|[x,\hdb_{k}]I_3'\|_{L^{2_{\delta}}}+\|\hdb_{k}xI_3'\|_{L^{2_{\delta}}}\right)
\end{align*}
where we denote $2_{\delta}=\f{2}{1+\delta/5}$ and $k^{-}=\max(-k,0),k^{+}=\max(k,0)$.
Similarly, by dispersive estimate \eqref{dispersive}, H\"{o}lder's inequality,
\beqs
   \||\na|^{\f{1}{2}}I_3'\|_{W^{1,\infty}}\lesssim \ltr^{-1}\sum_{k\in\mathbb{Z}}
   2^{\f{k}{2}}2^{3 k_{+}}\|\hdb_{k} I_3'\|_{L^1} \lesssim \ltr^{-1}( \|I_3'\|_{H^4}+\sup_{k\in\mathbb{Z}
   }2^{\delta k_{-}}2^{4k_{+}}\|x\hdb_{k} I_3'\|_{L^{2_{\delta}}})
\eeqs
By Lemma \ref{Lp bounds for eitb(D)}, one has that: 
$\|e^{isb(D)}\chi^L(D)g\|_{L^{2_{\delta}}}\lesim \lsr^{\f{\delta}{5}}\|g\|_{W^{\delta,2_{\delta}}}.$ Therefore,
the commutator term can be bounded by applying Corollary \ref{cortrilinear} and Lemma \ref{high regularity decay}:
\beno
&&\|[x,\hdb_{k}]I_3'\|_{W^{4+2\delta,2_{\delta}}}=\|\izt e^{isb(D)}e^{\ep(t-s)\Delta}(\Phi_{k})'(D)T_{\f{m}{\phi}}\big(\R\lnr(\R w)^2,\R w\big)\|_{W^{4+2\delta,2_{\delta}}}\d s\\
&\lesim& \izt \lsr^{\f{\delta}{5}}\|T_{\f{m}{\phi}}
(\R\lnr(\R w)^2,\R w)\|_{W^{4+3\delta,\f{1}{1-3\delta/10}}}\|\cF^{-1}\big(2^{-k}\Phi'(2^{-k}\cdot)\big)\|_{L^{\f{10}{5+4\delta}}}\d s\\
&\lesim& 2^{-\f{4}{5}\delta k}\izt \lsr^{\f{\delta}{5}}\|w\|_{H^{N'}}\|w\|_{W^{2,4}}\|w\|_{W^{2,\f{20}{5-6\delta}}}\d s\lesim 2^{-\f{4}{5}\delta k}\izt\lsr^{\f{\delta}{5}}\lsr^{-1-\f{3}{5}\delta}\d s\|U\|_{X}^3\lesim 2^{-\f{4}{5}\delta k}\|U\|_{X_T}^3
\eeno
It now remains for us to estimate
$\sup_{k\in\mathbb{Z}
   }2^{\f{4}{5}\delta k_{-}}2^{(4+\delta)k_{+}}\|\hdb_{k}x I_3'\|_{L^{2_{\delta}}}$. By the expression of $I_3'$, we have that
   $ x I_3'=\sum_{j=1}^{4}Z_j$
   where 
   \begin{align*}
& Z_1=\int i s e^{isb(D)}e^{\ep(t-s)\Delta}T_{\f{m}{\phi}\p_{\xi}\tilde{\phi}}(\R \lnr (\R w)^2,\R w)\d s,\\
& Z_2=
\int_{0}^{t}e^{isb(D)}e^{\ep(t-s)\Delta}
T_{\partial_{\xi}(\f{m}{\phi})}(\R \lnr (\R w)^{2},\R w)\d s,\\
&Z_3=\int_{0}^{t}e^{isb(D)}e^{\ep(t-s)\Delta}T_{\f{m}{\phi}}(\R \lnr (\R w)^{2}, e^{-isb(D)}x\R f)\d s,\\
& Z_4=\izt \ep(t-s)\na e^{\ep(t-s)\Delta}e^{isb(D)}T_{\f{m}{\phi}}(\R \lnr (\R w)^{2},\R w)\d s.
   \end{align*}
We first remark that  $\|\hdb_{k}xZ_4\|_{W^{4+2\delta,2_{\delta}}}$
can be estimated in the same manner as that of $\|[x,\hdb_{k}]I_3'\|$, the only difference is that at this stage we use the fact the $L^{\f{10}{5+4\delta}}$ norm of the kernel of $\hdb_{k}\ep(t-s)\na e^{\ep(t-s)\Delta}$ is less than $2^{-\f{4}{5}\delta k}$ uniformly in $\ep\in (0,1].$ Indeed, one can think of $\hdb_{k}\ep(t-s)\na e^{\ep(t-s)\Delta}$ as $\hdb_{k}|\na|^{-1}$, since $\|\mathcal{F}^{-1}\big(\varepsilon(t-s)\xi^2 e^{-\varepsilon(t-s)|\xi|^2}\big)\|_{L^1}$ is uniformly bounded.
We point out here that we choose $2_{\delta}=\f{2}{1+\delta/5}$ mainly to manage to control the commutator term $[x,\hdb_{k}]I_3'$ and $Z_4$, since the situation is better if $2_{\delta}$ is closer to 2. We also emphasize that the presence of the half derivative when we control the $L_{x}^{\infty}$ norm of $I_3$ is necessary in here. Indeed, as we explained in the introduction, due to the weak dispersive estimate, we need to control it by the weighted norm: $\|xI_{3}'\|_{L^{2^{-}}}.$ Nevertheless, when we deal with the $\|Z_4\|_{L^{2^{-}}}$ which corresponds to the frequency derivative hits on $e^{\ep(t-s)\Delta}$, to compensate the growth of $(t-s)$, the best one
can use is: $\|\na e^{\ep(t-s)\Delta}\|_{L^{2^{-}}}\lesim (\ep(t-s))^{-1^{-}}$, which is obviously not enough. 
Based on this, the extra derivative can help in the sense that we could find some $1^{+}$, such that $\|\na^{1^{+}} e^{\ep(t-s)\Delta}\|_{L^{2^{-}}}\lesim (\ep(t-s))^{-1}$.

The estimations for $Z_1$-$Z_3$ are more involved since in these cases we can not use any information of heat kernel.
However, since it has been showed that $e^{itb(D)}\chi^{L}(D)$, $\Im\phi(\xi,\eta)=b(\xi)\pm b(\xi-\eta)\pm b(\eta)$ and $\tilde{\phi}(\xi,\eta,\si)=b(\xi)\pm b(\xi-\eta)\pm b(\eta-\si)\pm b(\si)$
has the same properties as $e^{it\lnr}$, $\lxr\pm\lxmer\pm\ler$ and $\lxr\pm \lxmer\pm \lemsr\pm \lsir$ respectively, they can be achieved  by the similar arguments as in \cite{MR3274788} where the global existence of 2-d Euler-Poisson is proved. 
We thus just sketch them in the appendix for completeness and reader's convenience.

\end{subsubsection}


\begin{subsection}{Estimate of $H^{N'}$}
In this short subsection, we deal with the estimate of $\| U^L\|_{H^{N'}}$. By virtue of the definition $U^L=Q^{-1}\chi^{L}W$ and the fact that $Q^{-1}\chi^{L}$ is a $L^2$  multiplier, one easily sees that $\|U^L\|_{H^{N'}}\lesim \|W\|_{H^{N'}}$. 
 By \eqref{formulae of w}, \eqref{quadratic low term}, $$w=(1)+(3)+I_1+I_2+I_3+I_4$$ where $(1),(3)$ are defined in \eqref{formula of w} and $I_1-I_4$ is defined in \eqref{quadratic low term}.
 
 We shall only show the estimation of $I_1-I_4$.
For $I_1,I_{2}$, by bilinear estimate \eqref{bilinear estimate}(we use $2_{+}=\f{9}{4}-2\delta$), Sobolev embedding and the definition of $X_T$ norm,
\beno
&&\|I_1\|_{H^{N'}}\lesim\|T_{\f{m}{\phi}}(\R w(t)),\R w(t))\|_{H^{N'}}\lesim \|\R w\|_{H^{N'+\f{9}{4}}}\|\R w\|_{W^{2,\f{1}{\delta}}}\lesim \ltr^{-(1-3\delta)}\|U\|_{X_T}^{2}\\
&&\|I_2\|_{H^{N'}}\lesim\|T_{\f{m}{\phi}}(\R w_0,\R w_0)\|_{H^{N'}}\lesim \|w_0\|_{H^{N'+\f{9}{4}}}^2\|w_0\|_{W^{2,\infty}}\lesim \|w_0\|_{H^{N'+\f{9}{4}}}^2.
\eeno
For $I_3$, we use 
Corollary \ref{cortrilinear} with $3_{+}=\f{13}{4}-2\delta$ and assumption $N'=N-4$ to get:
\beno
&&\|I_3\|_{H^{N'}}\lesim \izt \|T_{\f{m}{\phi}}(\R B(\R w,\R w) ,\R w(s))\|_{H^{N'}}\d s\\
&\lesim&\izt \|w\|_{H^{N'+\f{13}{4}}}\|w\|_{W^{2,\f{1}{\delta}}}^2\d s\lesim\izt \lsr^{-2(1-3\delta)}\|U\|_{X_T}^3\d s
\lesim \|U\|_{X_T}^3.
\eeno
$I_4$ can be estimated in the similar fashion,  
by bilinear estimate and the definition of $H,$
\beno
\|I_4\|_{H^{N'}}&\lesim& \izt\|T_{\f{m}{\phi}}(\R\lnr\chi^{L}H,\R w(s))\|_{H^{N'}}\d s\\&\lesim& \izt \|\lnr H\|_{W^{N'+\f{9}{4},(\f{1}{2}-\delta)^{-1}}}\|w\|_{W^{2,\f{1}{\delta}}}+\|\R \lnr H\|_{W^{2,4}}\|w\|_{W^{N'+\f{9}{4},4}}\d s\\
&\lesim&\izt \|U\|_{L^{\f{1}{\delta}}}\|U\|_{H^{N'+\f{13}{4}+2\delta}}\|w\|_{W^{2,\f{1}{\delta}}}+\|U^h\|_{W^{3,8}}\|U\|_{L^8}\|w\|_{H^{N'+\f{11}{4}}} \d s\\
&\lesim&\izt \lsr^{-2(1-3\delta)}\|U\|_{X_T}^{3}+\lsr^{-(2-7\delta)}\|U\|_{X_T}^{3}\d s\lesim \|U\|_{X_T}^3.
\eeno
\end{subsection}
\section{Conclusion of Theorem \ref{thmlow}}
By collecting the estimates in Section 6 and Section 7,
we find that if $\|U\|_{X_T}\leq \vartheta_2$, there exists three constants $d_1,d_2,d_3$  
 such that for any $\ep\in(0,1]$, any $T>0,$
\beq\label{energyineq}
\|U\|_{X_T}\leq d_{1}\|(u_0,\vr_0,\na \varphi_0)\|_{Y}+d_2 \|U\|_{X_T}^{2}+d_3\|U\|_{X_T}^{3}. 
\eeq
where 
\beno\label{def of initial norm}
\|(u_0,\vr_0,\na\varphi_0)\|_{Y}&\define& \|(u_{0},\vr_{0},\na\varphi_0)^{L}\|_{W^{4,1}}+\|x(u_{0},\vr_{0},\na\varphi_0)^{L}\|_{H^{4+\delta}}\nonumber\\
&&\qquad+\|x(u_{0},\vr_{0},\na\varphi_0)^{h}\|_{L^2}+\|(u_0,\vr_0,\na\varphi_0)\|_{H^N}.
\eeno
Combining with the local existence shown in Section 4, the global existence stems from the standard bootstrap arguments.
Indeed, assume  $\|(u_0,\vr_0,\na\varphi_0)\|_{Y}\leq \bar{\vartheta},$
and set
\beqs
T^{*}=\sup\{T| 
U \in C([0,T),H^N), \|U\|_{X_T}\leq 2d_1\bar{\vartheta}\}
\eeqs
Suppose that $T^{*}<+\infty,$ then
by \eqref{energyineq}, for any $t< T^{*},$ 
\beq
\|U\|_{X_t}\leq d_1 \bar{\vartheta}+ d_2 (2d_1\bar{\vartheta})^2+d_3(2d_1\bar{\vartheta})^3\leq \frac{3}{2}d_1\bar{\vartheta}
\eeq
if $\bar{\vartheta}$ is chosen small enough, say $\bar{\vartheta}\leq {\vartheta}_3.$ By the time continuity of $X_t$ norm, one gets that:
$\|U\|_{X_{T^{*}}}\leq \frac{3}{2}d_1\bar{\vartheta},$ which contradicts with the local existence and the definition of $T^{*}.$
We thus finish the proof of Theorem \ref{thmlow} by setting $\vartheta_1=\min\{{\vartheta_3},\frac{\vartheta_2}{2d_1}\}.$


\section{Proof of Theorem \ref{thmper}}

This section is devoted to the proof of Theorem
\ref{thmper} concerning the life span of system
\eqref{NSPP0}:
 \beqs
 \left\{
\begin{array}{l}
\displaystyle \pt n +\div( \rho v+nu+nv)=0,\\
\displaystyle \pt v+u\cdot {\na v}+v\cdot (\na u+\na v)-\varepsilon \f{1}{\rho+n}\Delta v +\na n -\na \psi
=\varepsilon(\f{1}{\rho+n}-1) \Delta u,   \\
\displaystyle \Delta \psi=n,\\
\displaystyle v|_{t=0} =\mathcal{P}u_0^{\varepsilon},  n|_{t=0}=0,\na\psi|_{t=0}=0.
\end{array}
\right.
\eeqs

\begin{proof}[Proof of Theorem \ref{thmper}]
 The local existence of the above system in $C([0,T_{\varepsilon},H^3)$ results from the local existence of system \eqref{NSPO} and \eqref{NSPlow}, it thus suffices for us to extend the existence time to $\varepsilon^{-(1-\vartheta)}$ which follows from the energy estimates. 
We define the energy functional:
\beno
\mathscr{E}_{3}=\sum_{|\alpha|\leq 3}\mathscr{E}_{\alpha}=\sum_{|\alpha|\leq 3}\f{1}{2}\int(1+\vr+n)|\p^{\alpha}v|^2+|\p^{\alpha}n|^2+|\p^{\alpha}\na\psi|^2\d x.
\eeno
Taking the time derivative of the above energy functional and using the equations \eqref{NSPP0}, we get:
\ben\label{EIalpha}
\partial_{t}\mathscr{E}_{\alpha}+\ep\int
|\p^{\alpha}\na v|^2+|\pa\div v|^2 \d x=\sum_{j=1}^{9}F_{j},
\een
where:
\beno
F_1&=&-\int \rho^{\ep}\p^{\alpha}v\big[\pa,u+v\big]\na v\d x,\quad\quad F_2=-\int \pa n \big[\pa,\rho^{\ep}\big]\div v\d x,\\
F_3&=&-\int \pa\na\psi\big[\pa,\rho^{\ep}\big]v\d x,\qquad \qquad F_4=-\int\pa n\pa\div(nu)\d x,\\
F_5&=&=\int \pa\na\psi\pa(nu)\d x, \qquad
\qquad \quad F_6=-\int\rho^{\ep}\pa v\pa(v\cdot\na u)\d x,\\
F_7&=&\int\pa n\big(\na\rho^{\ep}\pa v-\pa(\na\rho^{\ep}v)\big)\d x,\quad F_8=\ep\int \rho^{\ep}\pa v\pa\big[(\f{1}{\rho^{\ep}}-1)\Delta u\big],\\
F_9&=&\ep \int \rho^{\ep}\pa v\big[\pa,\f{1}{\rho^{\ep}}\big](\Delta v+\na\div v)\d x.
\eeno
We recall that $\vr^{\ep}=\rho+n=1+\vr+n$.
It is easy to see that: $F_1=F_2=F_3=F_9=0$ if $|\alpha|=0.$
By standard commutator estimates: (we assume $|\alpha|\geq 1$ in the estimates of $F_1,F_2,F_3,F_9$)
\begin{equation*}
\begin{aligned}
|F_1|&\lesim\|v\|_{H^{|\alpha|}}^2(\|\na v\|_{L^{\infty}}+|\na u\|_{W^{|\alpha|-1,\infty}}),\quad
|F_2+F_3|\lesim \|(n,v,\na\psi)\|_{H^{|\alpha|}}^2(\|( v, n)\|_{W^{1,\infty}}+\|\vr\|_{W^{|\alpha|,\infty}}),\\
|F_4|&\lesim \|n\|_{H^{|\alpha|}}^2\|\na u\|_{W^{|\alpha|,\infty}},\qquad \qquad\qquad\qquad
|F_5|\lesim \|(n,\na\psi)\|_{H^{|\alpha|}}\|\na u\|_{W^{|\alpha|-1,\infty}},
\\
|F_6|&\lesim \|v\|_{H^{|\alpha|}}^2\|\na u\|_{W^{|\alpha|,\infty}},\qquad \qquad \qquad\qquad |F_7|\lesim \|(n,v)\|_{H^{|\alpha|}}^2(\|(\na n,\na v)\|_{L^{\infty}}+\|\na \vr\|_{W^{|\alpha|,\infty}}),\\
  |F_8|&\lesim \ep\|v\|_{{H}^{|\alpha|}}\|\Delta u\|_{W^{|\alpha|,\infty}}\|(n,\vr)\|_{H^{|\alpha|}}\\
  &\lesim \|(n,v)\|_{{H}^{|\alpha|}}^2\|\Delta u\|_{W^{|\alpha|,\infty}}+\ep^2\|\Delta u\|_{W^{|\alpha|,\infty}}\|\vr\|_{H^{|\alpha|}}^2\\
|F_9|&\lesim \ep \|v\|_{\dot{H}^{|\alpha|}}\big(\|\na^2 v\|_{\dot{H}^{|\alpha|-1}}\|(\na n,\na\vr)\|_{L^{\infty}}+\|\na^2 v\|_{L^{\infty}}\|(\na n,\na\vr)\|_{\dot{H}^{|\alpha|-1}}\big)\\
&\lesssim\ep  \|\na v\|_{H^3}^2 (\|n\|_{H^3}^2+\|\vr\|_{H^3}^2).
\end{aligned}
\end{equation*}
 We only detail the estimate of $F_5$, which seems not direct. Indeed, by the Poisson equation $\Delta\psi=n$, we have:
 \beno
 |F_5|&=&\int \pa\na\psi\cdot u \pa n\d x+\int \pa\na\psi[\pa,u]n\d x \\
 &=&-\int\pa\na\psi \cdot\na (\pa\na\psi\cdot u)\d x+\int \pa\na\psi[\pa,u]n\d x \\
 &=&\f{1}{2}\int|\pa\na\psi|^2\div u\d x-\int\pa\na\psi\cdot(\pa\na\psi\cdot u)\d x+\int \pa\na\psi[\pa,u]n\d x \\
 &\lesim&  \|(n,\na\psi)\|_{H^{|\alpha|}}^2\|\na u\|_{W^{|\alpha|-1,\infty}}.
 \eeno
Define $$T_{*}=\sup_{T}\left\{T\big|\sup_{0\leq t\leq T}\mathscr{E}_3(t)\leq 4\vartheta^{2}\ep^{2-\vartheta}\right\},$$
where $0< \vartheta\leq\vartheta_0\leq \vartheta_1,$ $\vartheta_1$ is defined in Theorem \eqref{thmlow} and $\vartheta_0$ is to be chosen.

Summing up the above estimates for any $|\alpha|\leq 3$, we obtain that, there exist three constants $C_2>0,C_3>0,C_4>0$, for any $0<\vartheta\leq \vartheta_1$, if
$\|(\rho_0^{\ep}-1,\mathcal{P}^{\perp}u_{0}^{\ep},\na\varphi_0^{\ep})\|_{Y^4}\leq \f{\vartheta}{C_3C_1}$  (which yields $(1+t)^{}\|(\na u,\vr)\|_{W^{4,\infty}}+(\na u,\vr)\|_{H^7}\leq \f{\vartheta}{C_3}$ by Theorem \ref{thmlow}),
such that the following energy inequality holds:
\ben
\pt \mathscr{E}_3+\varepsilon\|\nabla v\|_{H^3}^2\leq C_2 \mathscr{E}_{3}^{\f{3}{2}}+(1+t)^{-1}
\vartheta^3\ep^2+(1+t)^{-1}\vartheta
\mathscr{E}_3+C_4\varepsilon \|\nabla v\|_{H^3}(\frac{\vartheta}{C_3}+\mathcal{E}_3)
.
\een

Now one can choose $\vartheta_0$   small enough, such that for any $0\leq t < T_{*},$
\begin{equation*}
   \|(n,\vr)\|_{L^{\infty}}\leq \f{1}{4}, \qquad
C_4(\frac{\vartheta}{C_3}+\mathcal{E}_3)\leq \frac{1}{2} 
\end{equation*}
which leads to:
 \ben\label{enineq}
\pt \mathscr{E}_3\leq C_2 \mathscr{E}_{3}^{\f{3}{2}}+(1+t)^{-1}
\vartheta^3\ep^2+(1+t)^{-1}\vartheta
\mathscr{E}_3\qquad \forall t\in[0,T_{*})
.
\een

We are now ready to show $T_{*}\geq  \ep^{-(1-\vartheta)}.$
Indeed, for any $t\leq T_0\define \min\{\ep^{-(1-\vartheta)}, T_{*}\}$,
one has by \eqref{enineq}, Gronwall's inequality and assumptions: $\vartheta\leq\f{1}{2}$, $16C_2^{\f{3}{2}}\vartheta\leq 1$, $\mathscr{E}_3(0)\leq\vartheta^2\ep^2:$
\ben\label{bootstrap}
\mathscr{E}_3(t)&\leq& e^{\izt \vartheta(1+\tau)^{-1}\d \tau}\mathscr{E}_{3}(0)+\izt 
e^{\int_{s}^{t} \vartheta(1+\tau)^{-1}\d \tau}\big(C_2\mathscr{E}_{3}^{\f{3}{2}}+\vartheta^{3}(1+s)^{-1}\ep^2\big)\d s\nonumber\\
&\leq&(1+t)^{\vartheta}\mathscr{E}_{3}(0)+(1+t)^{\vartheta}\izt (1+s)^{-\vartheta}\big(C_2(4\vartheta^2\ep^{2-\vartheta})^{\f{3}{2}}+\vartheta^{3}(1+s)^{-1}\ep^2\big)\d s\nonumber\\
&\leq&2^{\vartheta}\ep^{-\vartheta(1-\vartheta)}\vartheta^2
\ep^2+\f{8}{1-\vartheta}\ep^{-(1-\vartheta)}C_2^{\f{3}{2}}\vartheta^3\ep^{3-\f{3}{2}\vartheta}+\vartheta^3\ep^2 \ep^{-\vartheta(1-\vartheta)}
\leq\f{7}{2}\vartheta^2\ep^{2-\vartheta},
\een
which ensures $T_0=\ep^{1-\vartheta}<T_{*}.$ Note that since $\f{1}{2}\leq\rho_{0}^{\ep}\leq\f{3}{2}$, the assumption
$\|(n,v,\na\psi)\|_{H^3}\leq \vartheta\ep$ leads to 
$\mathscr{E}_{3}(0)\leq \vartheta^2\ep^2.$ We thus finish the proof of Theorem \ref{thmper} by choosing $C=C_1C_3.$
\end{proof}

\section{Appendix}
We sketch in this appendix the proof of the decay and weighted norm of $\R I_3\define \R e^{-itb(D)}I_3'$: $W^{4+2\delta,2_{\delta}}$ (recall $2_{\delta}=\f{2}{1+\delta/5}$) norm of $Z_1,Z_2,Z_3$. Let us begin with the estimate $Z_2$ which is the easiest.
By Lemma \ref{Lp bounds for eitb(D)}, Corollary \ref{cortrilinear} and Sobolev embedding,
\beno
\|Z_2\|_{W^{4+2\delta,2_{\delta}}}&\lesim&\int_{0}^{t}\lsr^{\f{\delta}{5}}\|T_{\partial_{\xi}(\f{m}{\phi})}(\R \lnr (\R w)^{2},\R w)\|_{W^{4+2\delta,2_{\delta}}}\d s\\
&\lesim&\int_{0}^{t}\lsr^{\f{\delta}{5}} \| w\|_{H^8}\|w\|_{W^{2,12}}^2\d s\lesim\int_{0}^{t}\lsr^{\f{\delta}{5}}\lsr^{\f{5}{3}}\d s\|U\|_{X}^3\lesim \|U\|_{X_T}^3.
\eeno
For $Z_3$, we split it into two terms:
\beno
Z_3&=&\int_{0}^{t}e^{isb(D)}e^{-\ep(t-s)\Delta}T_{\f{m}{\phi}(\chi_{\{\lxmer\leq \ler\}}+\chi_{\{\lxmer\geq \ler\}})}(\R\lnr (\R w)^{2}, e^{-isb(D)}x\R f)\d s\\
&\define&Z_{31}+Z_{32}
\eeno
{
The estimate of $Z_{31}$ is similar to that of $Z_2$, as we can put all the loss of derivative on the term $B(\R w,\R w)$.
Applying lemma \ref{Lp bounds for eitb(D)} with $p=2_{\delta}=\f{2}{1+\delta/5}$, bilinear estimate \eqref{bilinear estimate}, Lemma \ref{inter1} with $7.5\leq \f{2}{3}N+\f{1}{2}$}, we control $Z_{31}$ as:
\beno
\|Z_{31}\|_{W^{4+2\delta,2_{\delta}}}&\lesim& \int_{0}^{t}\lsr^{\f{\delta}{5}}\|T_{\f{m}{\phi}\chi_{\{\lxmer\leq \ler\}}}(\R \lnr (\R w)^{2}, e^{-isb(D)}x\R f)\|_{W^{4+3\delta,2_{\delta}}}\d s\\
&\lesim&\int_{0}^{t}\lsr^{\f{\delta}{5}}\|\R w\|_{L^{\infty}}\|\R w\|_{W^{7+4\delta,\f{10}{1+\delta}}}\|e^{-isb(D)}x\R f\|_{W^{2,\f{5}{2}}}\d s\\
&\lesim& \int_{0}^{t}\lsr^{\f{\delta}{5}}\|\R w\|_{W^{1,\f{1}{\delta}}}\|\R w\|_{W^{7.5,3}}\|\langle x\rangle f\|_{W^{3,\f{2}{\delta}}}\lsr^{\f{1}{5}}\d s\\
&\lesim&\int_{0}^{t}\lsr^{\f{1+\delta}{5}}\lsr^{-(1-2\delta)}\lsr^{-\f{1}{3}}\d s\|U\|_{X_T}^{3}\lesim\|U\|_{X_T}^{3}.
\eeno

For $Z_{32}$, one splits it again into two terms:
$$Z_{32}=
\cF^{-1}\bigg(\int_{0}^{t}\big(\Psi(\f{\xi-\eta}{\lsr^{\delta_0}})+1-\Psi(\f{\xi-\eta}{\lsr^{\delta_0}})\big)\cdots\d s\bigg)\define Z_{321}+Z_{322}$$.

For $Z_{321}$, 
Corollary \ref{cortrilinear} and Sobolev embedding lead to :
\beno
\|Z_{321}\|_{W^{4+2\delta,2_{\delta}}}&\lesim&\int_{0}^{t}\lsr^{\fd}\|T_{\f{m}{\phi}\chi_{\{\lxmer\geq \ler\}}}(\R \lnr (\R w)^{2}), P_{\leq \lsr^{\delta_0}}e^{-isb(D)}x\R f)\|_{W^{4+3\delta,2_{\delta}}}\d s\\
&\lesim&\int_{0}^{t}\lsr^{\fd}
\| w\|_{W^{2,12}}^2\| P_{\leq \lsr^{\delta_0}}e^{-isb(D)}x\R f)\|_{W^{8,3}}\d s\\
&\lesim&\int_{0}^{t}\lsr^{\fd}\|w\|_{W^{2,12}}^2\lsr^{5\delta_0}\lsr^{\f{1}{3}}\|x\R f\|_{W^{3,3}}\d s\\
&\lesim&\int_{0}^{t}\lsr^{\fd}\lsr^{-\f{5}{3}}\|U\|_{X_T^{2}}\lsr^{\f{1}{3}+5\delta_0}\|U\|_{X_T}\d s\lesim\|U\|_{X_T}^3.
\eeno
Note we can  choose $\delta<\delta_0\leq \f{1}{50}$.

For $Z_{322}$, we define
$M_{1}(\xi,\eta)=\f{m}{\phi}
(1-\Psi(\f{\xi-\eta}{\lsr^{\delta_0}}))\chi_{\{\lxmer>\ler\}}\ler $
and recall $\tphi=b(\xi)\pm b(\xi-\eta)\pm b(\eta-\si)\pm b(\si) $.
Using identity $\p_{\xi}\widehat{\R f}(\xi-\eta)=-\p_{\eta}\widehat{\R f}(\eta)$ to integrate by parts in $\eta,$ one gets:
\beno
Z_{322}&=&\cF^{-1}\big( \int_{0}^{t}\int\int is\partial_{\eta}\tphi e^{is\tphi}e^{-\ep(t-s)|\xi|^{2}}M_1(\xi,\eta)\R(\eta)\widehat{\R f}(\eta-\si)\widehat{\R f}(\si)\widehat{\R f}(\xi-\eta)\d \eta\d\si\d s\\
&&+\int_{0}^{t}\int\int
e^{is\tphi}e^{-\ep(t-s)|\xi|^{2}}\partial_{\eta}M_1(\xi,\eta)\R(\eta)\widehat{\R f}(\eta-\si)\widehat{\R f}(\si)\widehat{\R f}(\xi-\eta)\d \eta\d\si\d s\\
&&+\int_{0}^{t}\int\int
e^{is\tphi}e^{-\ep(t-s)|\xi|^{2}}M_1(\xi,\eta)|\eta|^{-1}\widehat{\R f}(\eta-\si)\widehat{\R f}(\si)\widehat{\R f}(\xi-\eta)\d \eta\d\si\d s\\
&&+\int_{0}^{t}\int\int
e^{is\tphi}e^{-\ep(t-s)|\xi|^{2}}M_1(\xi,\eta)\R(\eta)\widehat{x\R f}(\eta-\si)\widehat{\R f}(\si)\widehat{\R f}(\xi-\eta)\d \eta\d\si\d s\big)\\
&\lesim&Z_{3221}+\cdots Z_{3224}
\eeno
The estimation of 
$Z_{3222}=\int_{0}^{t}e^{isb(D)}e^{\ep(t-s)\Delta}T_{\partial_{\eta}M_1}
(\R(\R w)^2,\R w)\d s$ is similar to that of $Z_2$, we thus do not detail it.
For $G_{3223}$, thanks to Corollary \ref{bilinear estimate} and Hardy-Littlewood-Sobolev inequality,
\beno
\|Z_{3223}\|_{W^{4+2\delta,2_{\delta}}}&\lesim&\int_{0}^{t}\lsr^{\fd}\|T_{M_1
}(|\na|^{-1}(\R w)^{2},P_{\geq \lsr^{\delta_0}}\R w)\|_{W^{4+3\delta,2_{\delta}}}\d s\\
&\lesim&\lesim\int_{0}^{t}\lsr^{\fd}
\|P_{\geq \lsr^{\delta_0}}\R w)\|_{H^8}\|(\R w)^2\|_{W^{2,{2_{\delta}}}}\d s\\
&\lesim& \int_{0}^{t}\lsr^{\fd}\lsr^{-(N-8)\delta_0+\delta}\|w\|_{H^{N}}\lsr^{-(1-\f{\delta}{5})}\d s\|U\|_{X_T}^{2}\lesim\|U\|_{X_T}^{3}.
\eeno
Note $N= 11$, the last inequality holds if we choose $\delta\leq \delta_0$.

The term $Z_{3224}$ can be estimated in the similar manner as that of $Z_{31}$, we omit the detail. 
The estimate of $Z_{3221}$ is similar to that of $Z_1$, we thus only focus on the estimate of $Z_1$ in the following.

As before, we split it as:
$${Z_1}=\cF^{-1}\big(\int_{0}^{t}(\Psi(\f{\xi-\eta}{\lsr^{\delta_0}})+1-\Psi(\f{\xi-\eta}{\lsr^{\delta_0}}))\cdots\d s\big)\define Z_{11}+Z_{12}.$$
Split $Z_{12}$ further as:
$$Z_{12}=\int_{0}^{t}se^{isb(D)}e^{\ep(t-s)\Delta} T_{M_2}((P_{\geq \lsr^{\delta_0}}+P_{\leq \lsr^{\delta_0}})(\R(\R w)^{2}, P_{\geq \lsr^{\delta_0}}\R w)\d s$$
where $M_2=\f{m}{\phi}\ler(i\partial_{\xi}\tilde{\phi})$. Therefore, by bilinear estimate lemma \ref{lem bilinear estimate} and the Sobolev embedding,
\beno
&&\|Z_{12}\|_{W^{4+2\delta,2_{\delta}}}\lesim \int_{0}^{t}s\lsr^{\fd}\big(\|\R(\R w)^{2}\|_{W^{2,\f{1}{2\delta}}}^2\|P_{\geq \lsr^{\delta_0}}\R w\|_{H^7}+\|P_{\geq \lsr^{\delta_0}}\R(\R w)^{2}\|_{H^{7}}\|\R w\|_{W^{2,\f{1}{\delta}}}\big)\d s\\
&\lesim&\int_{0}^{t}s\lsr^{\fd}\|\R w\|_{W^{2,\f{1}{\delta}}}^2\lsr^{-(N-7)\delta_0}\|w\|_{H^{N}}\d s\lesim\int_{0}^{t}s\lsr^{\fd}\lsr^{-2(1-2\delta)}\lsr^{-(N-7)\delta_0+\delta}\d s \|U\|_{X_T}^{3}\lesim \|U\|_{X_T}^{3}.
\eeno
if we assume $N=11$, $\delta_0\geq 3\delta.$

For $Z_{11}$, one uses decomposition
$g^2=gP_{\geq \lsr^{\delta_0}}g+P_{\leq \lsr^{\delta_0}}g P_{\geq \lsr^{\delta_0}}g+P_{\leq \lsr^{\delta_0}}g P_{\leq \lsr^{\delta_0}}g$
to split it into three terms and denote as $Z_{111}+Z_{112}+Z_{113}$.
For the term $Z_{111}$, one can use bilinear estimate \eqref{bilinear estimate} 
Sobolev embedding and the spectral localization of each term to get:
\beno
\|Z_{111}\|_{w^{4,2_{\delta}}}
&\lesim&\izt \lsr^{1+\fd} (\|P_{\leq \lsr^{\delta_0}}\R w\|_{W^{7,\f{1}{\delta}}}\|P_{\geq \lsr^{\delta_0}}\R w\cdot\R w\|_{W^{2,\f{10}{5-9\delta}}}\\
&&\qquad +\|P_{\leq \lsr^{\delta_0}}\R w\|_{W^{2,\f{1}{\delta}}}\|P_{\geq \lsr^{\delta_0}}\R w\cdot\R w\|_{W^{\f{25}{4}+2\delta,\f{10}{5-9\delta}}}\d s\\
&\lesim& \izt \lsr^{1+\fd}\|P_{\leq \lsr^{\delta_0}}\R w\|_{W^{2,\f{1}{\delta}}}\|\R w\|_{L^{\f{1}{\delta}}}\big(\lsr^{5\delta_0}\|P_{\geq \lsr^{\delta_0}}\R w\|_{H^{3}}+\|P_{\geq \lsr^{\delta_0}}\R w\|_{H^{8}}\big)\d s\\
&\lesim&\izt \lsr^{1+\fd}\lsr^{-2(1-2\delta)}\lsr^{-(N-8)\delta_0+\delta}\d s\|U\|_{X_T}^{3}\lesim \|U\|_{X_T}^{3}
\eeno
as $N= 11$, and $\delta_0\geq 5\delta$.
The estimate of $Z_{112}$ is similar to that of $Z_{111}$, we thus skip it.
Now, we focus on the estimation of
\beno
Z_{113}&=&\izt se^{isb(D)}e^{\ep(t-s)\Delta} T_{M_2}(\R(P_{\leq \lsr^{\delta_0}}\R w P_{\leq \lsr^{\delta_0}}\R w, P_{\leq \lsr^{\delta_0}}\R w)\d s\\
&=& \cF^{-1}\big(\izt s \partial_{\xi}\tphi e^{is\tphi}e^{\ep(t-s)\Delta}\f{m}{\phi}(\xi,\eta)
\R(\eta)\ler\Phi_{\leq \lsr^{\delta_0}}(\eta-\si)\Phi_{\leq \lsr^{\delta_0}}(\si) \\
&& \qquad \Phi_{\leq \lsr^{\delta_0}}(\xi-\eta) \widehat{\R f)}(\eta-\si)\widehat{\R f)}(\si)\widehat{\R f)}(\xi-\eta)\d\si\d\eta\d s\big)
\eeno
Recall that $\tphi=b(\xi)\pm b(\xi-\eta)\pm b(\eta-\si)\pm b(\si)$.
For the case $'+++','++-','+-+','-++','---'$,
one could easily prove that:
$\f{1}{\tilde{\phi}}>_{\kpz}\min \{\lxr,\lxmer,\lemsr,\lsir\}.$  
Therefore, we shall use the identity $\f{1}{i\tilde{\phi}}\p_{s}{e^{is\tilde{\phi}}}=e^{is\tilde{\phi}}$
to integrate by parts in time, which gives us essentially the quartic terms that are easy to handle, we do not detail them. The remaining three terms $'--+','-+-','+--'$ are more involved, we will detail the $'+--'$ case for instance. The other two cases are
a little bit easier.
Firstly, for
$\tphi=b(\xi)+ b(\xi-\eta)- b(\eta-\si)- b(\si)$, we have $\partial_{\xi}\tphi=\f{(1-2\ep^{2}|\xi|^{2})\xi}{b(\xi)}+ \f{(1-2\ep^{2}|\xi-\eta|^{2})(\xi-\eta)}{b(\xi-\eta)}$.
In this case, by Lemma \ref{lemvector} below, one can find  two matrices $Q_1,Q_2$,
st, $\partial_{\xi}\tphi(\xi,\eta,\si)=-2Q_{1}(\xi,\eta)\partial_{\eta}\tphi-Q_2(\xi,\eta,\si)\partial_{\si}\tphi$, and $Q_j(j=1,2)$
satisfy the condition:
\beqs
|\pab\partial_{\si}^{\gamma}Q_j(\xi,\eta,\si)|\lesim_{\alpha,\beta,\gamma,\kpz} \langle|\xi|+|\eta|+|\si|\rangle^{3}.
\eeqs
We now split $Z_{113}$ again:
$$Z_{113}=\cF^{-1}(\izt \int (\Psi_{\geq \lsr^{-\delta_0}}(\eta)+\Psi_{\leq \lsr^{-\delta_0}}(\eta))\cdots\d\eta\d s )\define Y_1+Y_2.$$
Let us see $Y_1$. In this case,  there is no singularity for $\R (\eta)$ as $\eta$ does not vanish. Therefore, we could use the identity:
$$is\partial_{\xi}\tphi e^{is\tphi}=-2Q_1(\xi,\eta,\si)\partial_{\eta}(e^{is\tphi})-Q_{2}(\xi,\eta,\si)\partial_{\si}(e^{is\tphi})$$
and integrate by parts in $\eta$ and $\si$
respectively. We only detail the situation of integration by parts in $\eta$ as the other case is similar. To continue, we denote
$m_{j}(\xi,\eta,\si)=Q_j\f{m}{\phi}(\xi,\eta)
\R(\eta)\ler\Psi_{\leq \lsr^{\delta_0}}(\xi-\eta)\Psi_{\leq \lsr^{\delta_0}}(\eta-\si)\Psi_{\leq \lsr^{\delta_0}}(\si)\Psi_{\geq \lsr^{-\delta_0}}(\eta)\R(\eta)\ler$.

After integrating by parts in $\eta$, there are two terms to be estimated:
\beno
Y_{11}&=&\izt e^{isb(D)}e^{\ep(t-s)\Delta} T_{\partial_{\eta}m_{1}}(\R w,\R w,\R v)\d s\\
Y_{12}&=&\izt e^{isb(D)}e^{\ep(t-s)\Delta}T_{m_1}(e^{isb(D)}x\R f,\R w,\R w)\d s+ similar \quad term.
\eeno
However, these two terms can be easily treated once we have the following lemma.
\begin{lem}\label{trilinear estimate}
$m_j(\xi,\eta,\si), j=1,2$ is defined as follows, the following trilinear estimates hold:
 \beno
&&\|T_{m_j}(f,g,h)\|_{L^{p}}\lesim \lsr^{12\delta_0}\|f\|_{L^{p_1}}\|g\|_{L^{p_2}}\|h\|_{L^{p_3}},\\
&& \|T_{\partial_{\eta}m_j}(f,g,h)\|_{L^{p}}\lesim \lsr^{13\delta_0}\|f\|_{L^{p_1}}\|g\|_{L^{p_2}}\|h\|_{L^{p_{3}}},
\eeno
where $\f{1}{p}=\f{1}{p_1}+\f{1}{p_2}+\f{1}{p_3}$
\end{lem}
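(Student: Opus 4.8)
The plan is to prove Lemma \ref{trilinear estimate} by the same Coifman--Meyer type kernel argument used already in the proof of Lemma \ref{lem bilinear estimate}, adapted to the trilinear setting and keeping careful track of the powers of $\lsr$. First I would recall the explicit form of the multiplier
\[
m_j(\xi,\eta,\si)=Q_j(\xi,\eta,\si)\,\f{m}{\phi}(\xi,\eta)\,\R(\eta)\ler\,\Psi_{\leq \lsr^{\delta_0}}(\xi-\eta)\Psi_{\leq \lsr^{\delta_0}}(\eta-\si)\Psi_{\leq \lsr^{\delta_0}}(\si)\Psi_{\geq \lsr^{-\delta_0}}(\eta),
\]
and observe that on its support one has $|\xi|, |\xi-\eta|, |\eta-\si|, |\si| \lesssim \lsr^{\delta_0}$ (hence also $|\eta| \lesssim \lsr^{\delta_0}$ and $|\eta| \gtrsim \lsr^{-\delta_0}$), so that $b(\xi), b(\eta), \ldots$ are all comparable to $1$ there and the cutoffs $\R(\eta)\ler$ together with the low--frequency restriction stay bounded. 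Then, using Proposition \ref{derivative estiamte for phase} (which gives $|\pab \f{m}{\phi}(\xi+\eta,\eta)|\lesssim_{\alpha,\beta,\kpz}\lxper \min\{b(\xi),b(\eta),b(\xi+\eta)\}\lesssim_{\kpz}\lsr^{\delta_0}$ on the support), the polynomial bound on $Q_j$ from Lemma \ref{lemvector} ($|\pab\partial_{\si}^{\gamma}Q_j|\lesssim \langle|\xi|+|\eta|+|\si|\rangle^3\lesssim \lsr^{3\delta_0}$ on the support), the fact that the singular factor $\R(\eta)=\f{\na}{|\na|}$ is harmless since $|\eta|\gtrsim \lsr^{-\delta_0}$, and the scaling of each derivative of a cutoff $\Psi_{\leq \lsr^{\delta_0}}$ or $\Psi_{\geq \lsr^{-\delta_0}}$ which costs at most $\lsr^{\delta_0}$ per derivative, I would estimate, for every multi-index $\alpha,\beta,\gamma$ with $|\alpha|+|\beta|+|\gamma|\leq 6$ (say),
\[
|\partial_\xi^{\alpha}\partial_\eta^{\beta}\partial_\si^{\gamma}m_j(\xi,\eta,\si)|\lesssim_{\kpz}\lsr^{C\delta_0}\,\mathbf 1_{\{|\xi|,|\xi-\eta|,|\eta-\si|,|\si|\lesssim \lsr^{\delta_0},\ |\eta|\gtrsim \lsr^{-\delta_0}\}},
\]
with an explicit integer $C$ that one reads off (the stated exponents $12\delta_0$ and $13\delta_0$ come precisely from summing these individual $\delta_0$ costs, the extra $\delta_0$ for $\partial_\eta m_j$ being the cost of the additional $\eta$--derivative).

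Next I would convert this symbol bound into the desired $L^p$ trilinear inequality. The key technical step, exactly as in Lemma \ref{lem bilinear estimate}, is to show that the inverse Fourier transform of (a rescaled version of) $m_j$ is an $L^1$ kernel, with $L^1$ norm controlled by $\lsr^{C\delta_0}$. I would write $\cF^{-1}(m_j)(x,y,z)$ and use the elementary bound $\|g\|_{L^1(\mathbb R^6)}\lesssim \|(1+|x|^3+|y|^3+|z|^3)^{-1}\|_{L^2(\mathbb R^6)}\big(\|g\|_{L^2}+\sum \|\partial^3 g\|_{L^2}\big)$; since $m_j$ and all its derivatives up to order $3$ in each of $\xi,\eta,\si$ are, by the previous step, bounded by $\lsr^{C\delta_0}$ times the indicator of a set of measure $\lesssim \lsr^{6\delta_0}$ (six because we are in three two--dimensional variables), one gets $\|\cF^{-1}(m_j)\|_{L^1_{x,y,z}}\lesssim \lsr^{C'\delta_0}$. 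Then by definition of the trilinear operator $T_{\tilde m}$ in \eqref{trilinear},
\[
T_{m_j}(f,g,h)(x)=\int \cF^{-1}(m_j)(x',y',z')\,f(x-x')\,g(x-y')\,h(x-z')\,\d x'\,\d y'\,\d z',
\]
and Minkowski's and Hölder's inequalities (splitting $\tfrac1p=\tfrac1{p_1}+\tfrac1{p_2}+\tfrac1{p_3}$) give $\|T_{m_j}(f,g,h)\|_{L^p}\lesssim \|\cF^{-1}(m_j)\|_{L^1}\|f\|_{L^{p_1}}\|g\|_{L^{p_2}}\|h\|_{L^{p_3}}$. The same argument applied to $\partial_\eta m_j$ in place of $m_j$ yields the second inequality with one extra factor of $\lsr^{\delta_0}$.

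I do not expect a serious obstacle here; the mechanism is routine once Proposition \ref{derivative estiamte for phase} and Lemma \ref{lemvector} are in hand. The one point that requires genuine care — and which I would treat as the ``hard part'' of the write-up — is bookkeeping the exponent of $\lsr$: one must check that the number of derivatives one is forced to spend (three per variable to make $\cF^{-1}(m_j)$ integrable, plus the ones already sitting in $Q_j$ via Lemma \ref{lemvector}, plus the single derivative in the $\partial_\eta m_j$ statement) together with the $\lsr^{6\delta_0}$ from the measure of the support indeed sums to at most $12\delta_0$ (resp.\ $13\delta_0$), so that when this lemma is used back in the estimates of $Y_{11}$ and $Y_{12}$ the gain $\lsr^{-(N-\cdot)\delta_0}$ coming from the high--frequency projections still beats it for $N=11$ and $\delta_0$ small. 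A secondary subtlety is that the factor $\R(\eta)=\na/|\na|$ is not smooth at $\eta=0$, but this is neutralized by the cutoff $\Psi_{\geq \lsr^{-\delta_0}}(\eta)$, at the price of the $\lsr^{\delta_0}$ accounted for above; I would state this explicitly.
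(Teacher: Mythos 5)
Your proposal follows essentially the same route as the paper: symbol bounds on $m_j$ from Proposition \ref{derivative estiamte for phase} and Lemma \ref{lemvector}, an $L^1$ bound on $\cF^{-1}(m_j)$ obtained by Cauchy--Schwarz against a polynomial weight together with the smallness of the support, and then the explicit convolution formula for $T_{m_j}$ with Minkowski/H\"older. One quantitative step as you wrote it would fail, though: the weight $(1+|x|^3+|y|^3+|z|^3)^{-1}$ is \emph{not} in $L^2(\mathbb{R}^6)$ (its square decays only like $|(x,y,z)|^{-6}$, which is log-divergent in six dimensions), so three derivatives per variable do not suffice to conclude $\cF^{-1}(m_j)\in L^1$. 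The paper takes four derivatives in each variable, estimating $\|\cF^{-1}(m_j)\|_{L^1}\lesim\|(1+\p_{\xi}^{4}+\p_{\eta}^{4}+\p_{\si}^{4})m_j\|_{L^2}$. This matters for your bookkeeping: with four derivatives your loose accounting ``each cutoff derivative costs $\lsr^{\delta_0}$'' is only safe because in fact only the $\eta$-derivatives cost a factor $\lsr^{\delta_0}$ (through $\Psi_{\geq\lsr^{-\delta_0}}(\eta)$ and $\R(\eta)$ on $|\eta|\gtrsim\lsr^{-\delta_0}$), while derivatives of the cutoffs $\Psi_{\leq\lsr^{\delta_0}}$ produce a factor $\lsr^{-\delta_0}$, i.e.\ a gain; the paper's symbol bound is $|\pab\p_{\si}^{\gamma}m_j|\lesim\lsr^{\delta_0|\beta|}\lsr^{5\delta_0}$, which with four $\eta$-derivatives plus the $\lsr^{3\delta_0}$ from the measure of the support gives exactly $\lsr^{12\delta_0}$ (and $\lsr^{13\delta_0}$ for $\p_{\eta}m_j$). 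With that correction your argument coincides with the paper's proof.
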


\begin{proof}
It is not hard to check that:
$\pab\partial_{\si}^{\gamma}m_{1}\lesim \lsr^{\delta_0|\beta|}\lsr^{5\delta_0},$
which yields
\beno
&&\|\cF^{-1}(m_{j}(\xi,\eta,\si))\|_{L^{1}}\lesim \|(1+\p_{\xi}^{4}+\p_{\eta}^{4}+\p_{\si}^{4})m_j\|_{L^2}\\
&\lesim&(\int_{|(\xi,\eta,\si)|\lesim \lsr^{\delta_0}}|(1+\p_{\xi}^{4}+\p_{\eta}^{4}+\p_{\si}^{4})m_j|^{2}\d\xi\d\eta\d\si)^{\f{1}{2}}\lesim\lsr^{9\delta_0}\lsr^{3\delta_0}\lesim \lsr^{12\delta_0}
\eeno
Similarly, $\|\cF^{-1}(\p_{\eta}m_{j}(\xi,\eta,\si))\|_{L^{1}}\lesim \lsr^{13\delta_0}$.
We thus finish the proof by noticing the explicit formulae of $T(f,g,h)$
$$T_{m_j}(f,g,h)=\int\int\int \cF^{-1}(m_{j})(y.z-y,w-z)f(x-y)g(x-z)h(x-w)\d y \d z\d w$$
\end{proof}
We now estimate $Y_{12}$ for example,
by the spectral localization and Lemma
\ref{Lp bounds for eitb(D)},
\beno
\|Y_{12}\|_{W^{4+2\delta,2_{\delta}}}&\lesim& \int_{0}^{t} \lsr^{\fd}\lsr^{(4+3\delta)\delta_0} \|T_{m_{1}\lxr^{\delta}}(e^{isb(D)}x\R f,\R w,\R w)\|_{L^{2_{\delta}}}\d s\\
&\lesim& \int_{0}^{t}\lsr^{17\delta_0}\|e^{isb(D)}x\R f\|_{L^{(\f{1}{2_{\delta}}-2\delta)^{-1}}}\|\R w\|_{L^{\f{1}{\delta}}}^{2}\d s\\
&\lesim&\izt \lsr^{17\delta_0}\lsr^{-2(1-2\delta)}\lsr^{4\delta}\d s\|U\|_{X_T}^{3}\lesim \|U\|_{X_T}^{3}. 
\eeno
where the folowing fact has been used:
$$\|e^{isb(D)}x\R f\|_{L^{(\f{1}{2_{\delta}}-2\delta)^{-1}}}\lesim \lsr^{4\delta}\|x\R f\|_{W^{\f{1}{2},(\f{1}{2_{\delta}}-2\delta)^{-1}}}\lesim \lsr^{4\delta} \|xf\|_{W^{1,\f{2}{1-\delta}}}.$$

 We now go back to estimate $Y_2,$
as before, we split it into two terms:
$$Y_2=\cF^{-1}(\izt \int \big(\Psi_{\leq 3\lsr^{-\delta_0/5}}(\xi-\eta)+\Psi_{\geq 3\lsr^{-\delta_0/5}}(\xi-\eta)\big)\cdots\d\eta\d s)\define Y_{21}+Y_{22}$$
For $Y_{21}$, one can use the specific form of 
$\p_{\xi}\tphi=\f{1-2\ep|\xi|^2}{b(\xi)}\xi+\f{1-2\ep |\xi-\eta|^{2}}{b(\xi-\eta)}(\xi-\eta)$.
The observation is that, having projected to the low frequency for $\xi-\eta$ and $\eta$, we could make use of $\xi-\eta$ and $\eta$ appearing in $\p_{\xi}\tphi$.
We also recall that $(1-2\ep |\xi|^{2})$ here is bounded on the support of $\chi^L(\xi)$.
Formally, we could write $Y_{21}$ as
\beq\label{Y_{211}}
\izt is e^{isb(D)}e^{\ep(t-s)\Delta}\na T_{\f{m}{\phi}
\f{1-2\ep|\xi|^2}{b(\xi)}\ler}(\R P_{\lesim \lsr^{-\delta_0}}
(P_{\lesim \lsr^{\delta_0}}\R w)^{2},
P_{\lesim 3\lsr^{-\delta_0/5}}\R w)\d s
\eeq
and similar term.
One can estimate \eqref{Y_{211}} as follows:
\beno
\|\ref{Y_{211}}\|_{W^{4+2\delta,2_{\delta}}}&\lesim&\izt \lsr^{1+\delta}\lsr^{-\f{1}{5}\delta_0}\|\R w\|_{W^{2,\f{1}{\delta}}}^{2}\|\R w\|_{W^{8,(\f{1}{2_{\delta}}-2\delta)^{-1}}}\d s\\
&\lesim& \izt \lsr^{1+\delta -\f{1}{5}\delta_0}\lsr^{-2(1-2\delta)}\d s\|U\|_{X_T}^{3}\lesim \|U\|_{X_T}^{3}
\eeno
if we choose $\delta_0\geq 50\delta.$

For $Y_{22}$, we need to split again into two terms.
$$Y_{22}=\cF^{-1}(\izt \int\int \big(\Psi_{\geq 2\lsr^{-\delta_0}}(\si)+\Psi_{\leq 2\lsr^{-\delta_0}}(\si)\big)\cdots\d\si\d\eta\d s)\define Y_{221}+Y_{222}$$

Let us see $Y_{221}$,
in this case we have $|\si-\eta|>|\si|-|\eta|>\f{4}{9}|\si|$.
Besides, one can find a matrix $Q_3$, such that: 
$$\p_{\si}\tphi=-[\f{1-2\ep|\si|^2}{b(\si)}\si +\f{1-2\ep|\si-\eta|^2}{b(\si-\eta)}(\si-\eta)]=Q_3(2\si-\eta)$$
so we have: $|\p_{\si}\tphi|\gtrsim\|Q_3^{-1}\|^{-1}|2\si-\eta|\gtrsim\f{|\si|}{\lsir \lemsr}\gtrsim \lsr^{-3\delta_0}.$
We thus could use identity $e^{is\tphi}=\f{\p_{\si}\tphi\cdot\p_{\si}}{is|\p_{\si}\tphi|^2}\cdot\p_{\si}e^{is\tphi}$ and integrate by parts in $\si$, this leads to two terms:
\beno
Y_{2211}&=&\izt e^{isb(D)}e^{\ep(t-s)\Delta}T_{m_{3}}
(\R T_{\p_{\si}\tilde{m}}(\R w)^{2},\R w )\d s\\
Y_{2212}&=&\izt e^{isb(D)}e^{\ep(t-s)\Delta}T_{m_{3}}
(\R T_{\tilde{m}}(e^{isb(D)}x\R f,\R w),\R w )\d s\\
\eeno
where we denote
\begin{equation*}
\begin{aligned}
    m_3(\xi,\eta)&=\f{m}{\phi}
\ler \Psi_{\leq \lsr^{-\delta_0}}(\eta)\Psi_{\leq \lsr^{\delta_0}}(\xi-\eta)\Psi_{\geq 3\lsr^{-\delta_0/5
}}(\xi-\eta),\\
\tilde{m}(\eta,\si)&=\f{\p_{\si}\tphi}{|\p_{\si}\tphi|^2}\Psi_{\geq 2\lsr^{-\delta_0}}(\si)\Psi_{\leq \lsr^{\delta_0}}(\si)\Psi_{\leq \lsr^{\delta_0}}(\eta-\si)\tilde{\chi}^{L}(\eta-\si)\tilde{\chi}^{L}(\si). 
\end{aligned}
\end{equation*}
Similar to Lemma \ref{trilinear estimate}, one has the following inequality:
\beno
&&\|T_{m_3}(f,g)\|_{L^p}\lesim \lsr^{6\delta_0}\|f\|_{L^{p_1}}\|g\|_{L^{p_2}}\\
\|T_{\tilde{m}}(u,v)\|_{L^p}&\lesim& \lsr^{14\delta_0}\|u\|_{L^{p_1}}\|v\|_{L^{p_2}}\quad \|T_{\p_{\si}\tilde{m}}(u,v)\|_{L^p}\lesim \lsr^{17\delta_0}\|u\|_{L^{p_1}}\|v\|_{L^{p_2}}
\eeno
for any $1\leq p\leq \infty$ and $\f{1}{p}=\f{1}{p_1}+\f{1}{p_2}$.
 These inequalities in hand, the estimates of $Y_{2211}, Y_{2212}$ are direct. For example:
\beno
\|Y_{2212}\|_{W^{4+2\delta,2_{\delta}}}&\lesim&\izt
\lsr^{\fd}\lsr^{(4+3\delta)\delta_0}\lsr^{20\delta_0}\|e^{isb(D)}x\R f\|_{L^{(\f{1}{2_{\delta}}-2\delta)^{-1}}}\|\R w\|_{L^{\f{1}{\delta}}}^{2}\d s\\
&\lesim&\izt
\lsr^{25\delta_0}\lsr^{-2(1-2\delta)}\d s\|U\|_{X_T}^3\lesim \|U\|_{X_T}^3
\eeno
Note we could choose $\delta<\delta_0 \leq \f{1}{50}$.

Finally, it remains for us to estimate $Y_{222}$. In this case, there is no structure for $\p_{\xi}\tphi$ can be used. Nevertheless, as noted in \cite{MR3274788}, one can
employ kind of 'partial normal form'.
We notice that:
$b(\eta)+b(\si)-2=\f{1-\ep^2|\eta-\si|^{2}}{b(\eta-\si)+1}|\eta-\si|^{2}+\f{1-\ep^2|\si|^{2}}{b(\si)+1}|\si|^{2}$,  the observation is that we could use $|\eta-\si|^{2}$ and $|\si|^2$ appearing in this quantity.
On the other hand, $b(\xi)+b(\xi-\eta)-2=\f{(1-\ep^2|\xi|^2)}{b(\xi)+1}|\xi|^2+\f{(1-\ep^2|\xi-\eta|^2)}{b(\xi-\eta)+1}|\xi-\eta|^2\geq \f{(1-\ep^2|\xi-\eta|^2)}{b(\xi-\eta)+1}|\xi-\eta|^2\geq \lsr^{-\delta_0/5}$.
We thus use identity:
\beno
e^{is\tphi}&=&e^{is(b(\xi)+b(\xi-\eta)-2)}e^{-is(b(\eta)+b(\eta-\si)-2)}\\
&=& \f{-i}{b(\xi)+b(\xi-\eta)-2}\p_{s}(e^{is(b(\xi)+b(\xi-\eta)-2)})e^{-is(b(\eta)+b(\eta-\si)-2)}
\eeno
to integrate by parts in $s$:
\beno
Y_{222}&=&-te^{itb(D)}T_{m_{4}}(\R T_{\tilde{m}_{1}}(\R w,\R w),\R w)+\izt s e^{isb(D)}e^{\ep (t-s)\Delta}\ep\Delta T_{m_{4}}(\R T_{\tilde{m}_{1}}(\R w,\R w),\R w)\d s
\\
&&+i\izt s e^{isb(D)}e^{\ep (t-s)\Delta}T_{m_{4}}(\R T_{\tilde{m}_{1}(b(\eta)+b(\eta-\si)-2)}(\R w,\R w),\R w)\d s\\
&&-\izt s e^{isb(D)}e^{\ep (t-s)\Delta}T_{m_{4}}(\R T_{\tilde{m}_{1}}(e^{isb(D)}\p_{s}\R f,\R w),\R w)\d s+ \quad similar\quad  term \\
&&+\izt s e^{isb(D)}e^{\ep (t-s)\Delta}[T_{\p_{s}m_{4}}(\R T_{\tilde{m}_{1}}(\R w,\R w),\R w)+T_{m_{4}}(\R T_{\p_{s}\tilde{m}_{1}}(\R w,\R w),\R w)]\d s\\
&\define&Y_{2221}+\cdots Y_{2225}
\eeno
where the following notations has been used:
\beno
&&m_{4}(\xi,\eta,s)=\f{\p_{\xi}\tilde{\phi}}{b(\xi)+b(\xi-\eta)-2}\f{m}{\phi}\ler\Psi_{\leq \lsr^{\delta_0}}(\xi-\eta)\Psi_{\geq 3\lsr^{-\delta_0/5
}}(\xi-\eta)\tilde{\Psi}_{\lesim \lsr^{\delta_0}}(\eta),\\
&&\tilde{m}_1(\eta,\si,s)=\Psi_{\leq \lsr^{-\delta_0}}(\eta)\Psi_{\leq \lsr^{\delta_0}}(\eta-\si)\Psi_{\leq \lsr^{\delta_0}}(\si)\Psi_{\leq 2\lsr^{-\delta_0}}(\si)
\eeno
We have again, as in Lemma \ref{trilinear estimate}, for any $1\leq p\leq \infty$, $\f{1}{p}=\f{1}{p_1}+\f{1}{p_2},$
\beqs
\|T_{m_4}(f,g)\|_{L^p}\lesim \lsr^{9\delta_0} \|f\|_{L^{p_1}}\|g\|_{L^{p_2}},\quad
\|T_{\tilde{m}_1}(u,v)\|_{L^p}\lesim \|u\|_{L^{p_1}}\|v\|_{L^{p_2}}.
\eeqs
 For $Y_{2221}$,
 \beno
 \|Y_{2221}\|_{W^{4+2\delta,2_{\delta}}}&\lesim& t \ltr^{14\delta_0}\|\R T_{\tilde{m}_{1}}(\R w,\R w)\|_{L^{\f{1}{2\delta}}}\|\R w\|_{L^{(\f{1}{2_{\delta}}-2\delta)^{-1}}}\\
 &\lesim& \ltr^{1+14\delta_0-2(1-2\delta)}\|U\|_{X_T}^{3}\lesim \|U\|_{X_T}^{3}
 \eeno
 if $\delta\leq \delta_0\leq \f{1}{50}$.
 
For $Y_{2222}$, owing to the above estimate and the fact $e^{\ep(t-s)\Delta}\epd \chi^{L}$ is $L^{2-\delta}$ multiplier with norm less than $\ltsr^{-1}$, we get that:
\beno
\|Y_{2222}\|_{W^{4+2\delta,2_{\delta}}}\lesim \izt  \ltsr^{-1}\lsr^{-(1-18\delta_0)}\d s\|U\|_{X_T}^{3}\lesim \|U\|_{X_T}^{3}
\eeno
 
For $Y_{2223}$, since on the support of $m_{4}(\xi,\eta,s)$, $|\xi-\eta|\geq |\eta|$,
and $|\xi-\eta|\gtrsim \lsr^{-\delta_0}$,
 it is not hard to see that 
 \beqs
 \|\cF^{-1}(m_{4}\lxr^{4+3\delta}\lxmer^{-8}\ler^{-8})\|_{L^1}\lesim \|(1+\p_{\xi}^{3}+\p_{\eta}^{3})(m_{4}\lxr^{4+3\delta}\lxmer^{-8}\ler^{-8})\|_{L^2}\lesim \lsr^{\f{4}{5}\delta_0}.
 \eeqs
Write also $b(\eta)+b(\si)-2=\f{1-\ep^2|\eta-\si|^{2}}{b(\eta-\si)+1}|\eta-\si|^{2}+\f{1-\ep^2|\si|^{2}}{b(\si)+1}|\si|^{2},$
 we thus could estimate $Y_{2223}$ by
\beno
\|Y_{2223}\|_{W^{4+2\delta,2_{\delta}}}&\lesim&\izt \lsr^{1+\delta}\lsr^{\f{4}{5}\delta_0}\|\f{1-\ep^2\Delta}{b(D)}|\na|^{2}P_{\lesim \lsr^{-\delta_0}}\R w\|_{L^{\f{1}{\delta}}}\|\R w\|_{L^{\f{1}{\delta}}} \|w\|_{W^{8,(\f{1}{2_{\delta}}-2\delta)^{-1}}}\d s\\
&\lesim&\izt \lsr^{1+2\delta}\lsr^{\f{4}{5}\delta_0}\lsr^{-2\delta_0}\lsr^{-2(1-2\delta)}\d s\|U\|_{X_T}^3\lesim \|U\|_{X_T}^3
\eeno
if $ 10\delta<\delta_0$.
For $Y_{2224}$ it is not tough because it is essentially quartic. Similar to that of $Y_{2221}$, we have
\beno
\|Y_{2224}\|_{W^{4+2\delta,2_{\delta}}}&\lesim&\izt \lsr^{1+\delta}\lsr^{14\delta_0}
\|\R w\|_{L^{\f{1}{\delta}}}^2 \|e^{isb(D)}x\R f\|_{L^{(\f{1}{2_{\delta}}-2\delta)^{-1}}}\d s\\
&\lesim&\izt \lsr^{1+\delta}\lsr^{14\delta_0}\lsr^{-2(1-2\delta)}\lsr^{-1}\d s\|U\|_{X_T}^3\lesim \|U\|_{X_T}^3
\eeno
where in the above, the following identity has been used $$e^{isb(D)}\p_{s}\R f=\R\epd w+\R\lnr (\R w)^{2}+\R H$$
from which one easily gets:
$\|e^{isb(D)}\p_{s}\R f\|_{H^1}\lesim \lsr^{-1}\|U\|_{X_T}^{3}$.

For the last term $Y_{2225}$, one notices that when we take the time derivative on $m_{4}(\xi,\eta,s)$ or $\tilde{m_1}(\eta,\si,s)$, there will emerge a power $\lsr^{-1}$ which is enough for us to close the estimate.
For instance, $$\p_{s}\Psi(\lsr^{\delta_0}\eta)= \eta \cdot \na\Psi(\lsr^{\delta_0}\eta)\lsr^{\delta_0}\f{s}{\lsr}\lsr^{-1}\define \tilde{\Psi}(\lsr^{\delta_0}\eta)\f{s}{\lsr}\lsr^{-1}$$
where $\tilde{\Psi}$ has the same properties as $\psi$ that we need:
compactly supported, smooth.

\begin{lem}\label{lemvector}
Recall $b(x)=\sqrt{1+|x|^2-\ep^2|x|^{4}}$, with $x\in\mathbb{R}^2$, $\ep\in(0,1] $. There exists a $2\times 2$ matrix $S$ such that the following identity holds:
\beno
\f{(1-2\ep^2|\xi|^2)x}{b(x)}-\f{(1-2\ep^2|y|^2)y}{b(y)}=S(x,y)(x-y)
\eeno
Besides, if $\ep|x|^2\leq 3\kpz,\ep|y|^2\leq 3\kpz$ with $\kpz\leq\f{1}{200}$, then $S$ is invertible. Moreover, for any $\ep\in (0,1]$ and $\alpha,\beta\in \mathbb{N}^2$, the following uniform (in $\varepsilon$) estimate holds:
\beno
&&|\p_{x}^{\alpha}\p_{y}^{\beta}S(x,y)|\lesssim_{\alpha,\beta,\kpz} \f{1}{\langle y\rangle},\\
&&|\p_{x}^{\alpha}\p_{y}^{\beta}S^{-1}(x,y)|\lesssim_{\alpha,\beta,\kpz} ({\langle x\rangle}+{\langle y\rangle})^{3}.
\eeno
\end{lem}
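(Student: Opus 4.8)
\noindent\textbf{Proof strategy for Lemma~\ref{lemvector}.} Since in the paper the matrices $S,S^{-1}$ are only used on the region $\ep|x|^2\le 3\kpz$, $\ep|y|^2\le 3\kpz$ with $\kpz\le\f1{200}$ (and the estimates are meaningful there, where $b$ is real-valued), I work throughout on that region. On it $B(z):=b(z)^2=1+|z|^2-(\ep|z|^2)^2\ge 1+|z|^2-9\kpz^2\ge\f12\langle z\rangle^2$, so $b$ is real, smooth and comparable to $\langle z\rangle$, and moreover $\ep^2|z|^2=\ep\,(\ep|z|^2)\lesim\kpz$ and the quantity $B(z)-|z|^2=1-\ep^2|z|^4\ge 1-9\kpz^2\gtrsim 1$. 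A direct averaging of the Hessian, $\int_0^1(D^2b)(y+t(x-y))\,\d t$, would satisfy the identity and have clean invertibility, but it loses a logarithm in the $\langle y\rangle^{-1}$ bound (already for $b=\langle\cdot\rangle$, $x=0$, $|y|$ large); I will instead build $S$ from an explicit algebraic identity, which also makes transparent that the whole construction is an $O(\kpz)$-perturbation of the Euler--Poisson case $b=\langle\cdot\rangle$, where everything is clean.

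Write $g(z):=\f{(1-2\ep^2|z|^2)z}{b(z)}=\na b(z)=\f{h(z)}{\sqrt{B(z)}}$, so that the left-hand side of the asserted identity is $g(x)-g(y)$; here $h(z):=(1-2\ep^2|z|^2)z$ and $B$ are \emph{polynomial} in $z$ and satisfy $\na B=2h$. By the fundamental theorem of calculus along the segment $[y,x]$ one gets polynomial coefficients $T_h(x,y)\in\mathbb{R}^{2\times2}$ and $v(x,y)\in\mathbb{R}^2$ with $h(x)-h(y)=T_h(x,y)(x-y)$ and $B(x)-B(y)=v(x,y)\cdot(x-y)$, namely $T_h=\int_0^1(Dh)(y+t(x-y))\,\d t$ and $v=2\int_0^1 h(y+t(x-y))\,\d t$; on the region above $T_h=I+O(\kpz)$, $h(z)=(1+O(\kpz))z$, $v=(x+y)+O(\kpz(|x|+|y|))$, with all $x,y$-derivatives controlled by $C_{\alpha,\beta,\kpz}$ times the obvious powers of $\langle\cdot\rangle$. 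Writing $g(x)-g(y)=\big(h(x)\sqrt{B(y)}-h(y)\sqrt{B(x)}\big)\big(B(x)B(y)\big)^{-1/2}$, adding and subtracting $h(x)\sqrt{B(x)}$ in the numerator, and using $\sqrt{B(y)}-\sqrt{B(x)}=-\big(B(x)-B(y)\big)\big(\sqrt{B(x)}+\sqrt{B(y)}\big)^{-1}$, one obtains $g(x)-g(y)=S(x,y)(x-y)$ with
\beq\label{defSlem}
S(x,y):=\f{T_h(x,y)}{\sqrt{B(y)}}-\f{h(x)\otimes v(x,y)}{\sqrt{B(x)B(y)}\,\big(\sqrt{B(x)}+\sqrt{B(y)}\big)},
\eeq
where $h(x)\otimes v$ is the rank-one matrix $w\mapsto h(x)\,(v\cdot w)$; at $\ep=0$ this is exactly $S=\langle y\rangle^{-1}I-\langle x\rangle^{-1}\langle y\rangle^{-1}(\langle x\rangle+\langle y\rangle)^{-1}\,x\otimes(x+y)$. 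Feeding the bounds above into \eqref{defSlem}, the first term contributes $\lesim\langle y\rangle^{-1}$ and the rank-one term contributes $\lesim\f{\langle x\rangle\,\langle|x|+|y|\rangle}{\langle x\rangle\langle y\rangle(\langle x\rangle+\langle y\rangle)}\lesim\langle y\rangle^{-1}$, and the same bookkeeping carried out after differentiating gives $|\p_{x}^{\alpha}\p_{y}^{\beta}S(x,y)|\lesim_{\alpha,\beta,\kpz}\langle y\rangle^{-1}$.

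For invertibility and the bound on $S^{-1}$ I would apply the matrix determinant lemma to \eqref{defSlem}: with $A=T_h/\sqrt{B(y)}$ (so $\mathrm{adj}(A)=\mathrm{adj}(T_h)/\sqrt{B(y)}$ in dimension two) one gets
\beq\label{detSlem}
\det S(x,y)=\f{1}{B(y)}\Big(\det T_h(x,y)-\f{v(x,y)^{\top}\,\mathrm{adj}\big(T_h(x,y)\big)\,h(x)}{\sqrt{B(x)}\,\big(\sqrt{B(x)}+\sqrt{B(y)}\big)}\Big).
\eeq
At $\ep=0$ the parenthesis equals $1-\f{|x|^2+x\cdot y}{\langle x\rangle(\langle x\rangle+\langle y\rangle)}=\f{1+(\langle x\rangle\langle y\rangle-x\cdot y)}{\langle x\rangle(\langle x\rangle+\langle y\rangle)}\ge\f{1}{\langle x\rangle(\langle x\rangle+\langle y\rangle)}\ge(\langle x\rangle+\langle y\rangle)^{-2}$, the decisive point being that $\langle x\rangle^2-|x|^2=1$ leaves a protected constant in the numerator. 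For $\ep>0$ that constant becomes $B(x)-|x|^2=1-\ep^2|x|^4\ge 1-9\kpz^2\gtrsim1$, so repeating the same computation with the $O(\kpz)$ corrections to $T_h,v,h,B$ retained shows that the parenthesis in \eqref{detSlem} is still $\gtrsim_{\kpz}(\langle x\rangle+\langle y\rangle)^{-2}$ once $\kpz$ is small enough; hence $\det S(x,y)\gtrsim_{\kpz}(\langle x\rangle+\langle y\rangle)^{-4}>0$, so $S$ is invertible, and since $|\mathrm{adj}(S)|=|S|\lesim\langle y\rangle^{-1}$ in dimension two, $S^{-1}=\det(S)^{-1}\mathrm{adj}(S)$ obeys $|S^{-1}|\lesim(\langle x\rangle+\langle y\rangle)^{3}$; differentiating this formula and controlling the derivatives of $\det(S)^{-1}$ by the lower bound on $|\det S|$ together with the bounds of the previous paragraph yields $|\p_{x}^{\alpha}\p_{y}^{\beta}S^{-1}(x,y)|\lesim_{\alpha,\beta,\kpz}(\langle x\rangle+\langle y\rangle)^{3}$. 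I expect this last step --- keeping the $O(\kpz)$ error terms in \eqref{detSlem} precise enough to secure the lower bound on the parenthesis uniformly in $(x,y)$ on the region, and then propagating everything through all derivatives of $\det(S)^{-1}\mathrm{adj}(S)$ --- to be the only genuinely technical point; it is entirely elementary and is the exact analogue of the computation carried out in \cite{MR3274788} for the Euler--Poisson system.
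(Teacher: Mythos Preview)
Your approach is essentially the same as the paper's: both write $g(x)-g(y)$ as $\frac{h(x)-h(y)}{b(y)}+h(x)\big(\frac{1}{b(x)}-\frac{1}{b(y)}\big)$, factor each piece as a matrix acting on $x-y$, and then compute $\det S$ via the rank-one structure, the key cancellation being $b(x)^2-|x|^2\gtrsim 1$ together with $b(x)b(y)-x\cdot y\ge 0$. The only cosmetic difference is that the paper factors $h(x)-h(y)$ and $B(x)-B(y)$ by the explicit algebraic identities $h(x)-h(y)=(1-2\ep^2|x|^2)(x-y)-2\ep^2\big((x+y)\cdot(x-y)\big)y$ and $B(x)-B(y)=(1-\ep^2(|x|^2+|y|^2))(x+y)\cdot(x-y)$ rather than by your integral averages $T_h,v$; this gives a closed formula for $\det S$ in which the $\ep$-corrections are visibly of size $O(\kpz)$ after one uses $\ep^2|x|^2\lesim\ep\kpz$ against $b^2\lesim 1+\kpz/\ep$, exactly the balancing you allude to.

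One slip to fix: the bound $\det S\gtrsim(\langle x\rangle+\langle y\rangle)^{-4}$ that you record is too crude to yield $|S^{-1}|\lesim(\langle x\rangle+\langle y\rangle)^3$, since $\langle y\rangle^{-1}(\langle x\rangle+\langle y\rangle)^4$ is not controlled by $(\langle x\rangle+\langle y\rangle)^3$ when $\langle x\rangle\gg\langle y\rangle$. You should keep the sharper inequality you actually proved, namely that the parenthesis in \eqref{detSlem} is $\gtrsim\big(\langle x\rangle(\langle x\rangle+\langle y\rangle)\big)^{-1}$, hence $\det S\gtrsim\big(\langle y\rangle^2\langle x\rangle(\langle x\rangle+\langle y\rangle)\big)^{-1}$; then $|S^{-1}|\lesim\langle y\rangle^{-1}\cdot\langle y\rangle^2\langle x\rangle(\langle x\rangle+\langle y\rangle)=\langle x\rangle\langle y\rangle(\langle x\rangle+\langle y\rangle)\lesim(\langle x\rangle+\langle y\rangle)^3$ by AM--GM, which is precisely the route the paper takes.
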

\begin{proof}
\beno
&&\f{(1-2\ep^2|\xi|^2)x}{b(x)}-\f{(1-2\ep^2|y|^2)y}{b(y)}\\
&=&(1-2\ep^2|x|^2)x(\f{1}{b(x)}-\f{1}{b(y)})+\f{(1-2\ep^2|x|^2)x-(1-2\ep^2|y|^2)y)}{b(y)}\\
&=&-(1-2\ep^2|x|^2)(|x|^2-|y|^2)x\f{1-\ep^2(|x|^2+|y|^2)}{b(x)b(y)(b(x)+b(y)}+\f{(1-2\ep^2|x|^2)(x-y)-2\ep^2(|x|^2-|y|^2)y}{b(y)}\\
&=&\f{1-2\ep^2|x|^2}{b(y)}[\Id_{2\times2}-\f{\ep^2(|x|^2+|y|^2)}{b(x)(b(x)+b(y))}x\otimes(x+y)-2\f{\ep^2}{1-2\ep^2|x|^2} y\otimes(x+y)](x-y)\\
&\define& S(x-y).
\eeno
We now compute $\det S$.
\beno
\det S &=& (\f{1-2\ep^2|x|^2}{b(y)})^2[1-\f{1-\ep^2(|x|^2+|y|^2)}{b(x)(b(x)+b(y))}x+\f{2\ep^2 }{1-2\ep^2|x|^2}y]\cdot(x+y)\\
&=&\f{(1-2\ep^2|x|^2)^2}{b^2(y)b(x)(b(x)+b(y))}\\
&&\qquad\bigg(1+\ep^2|x|^2|y|^2+b(x)b(y)-\big(1-\ep^2(|x|^2+|y|^2)\big)x\cdot y-\f{2\ep^2(x+y)\cdot y}{1-2\ep^2|x|^2}b(x)\big(b(x)+b(y)\big)\bigg).
\eeno
We note that if $\ep\leq 1,
\kpz\leq\f{1}{100}$, $1-2\ep^2|x|^2\geq\f{1}{2}$
and
\beqs
4\ep^2 b(x)\big(b(x)+b(y)\big)(x+y)\cdot y
\leq 9\ep^2(|x|^2+|y|^2)(b^2(x)+b^2(y))\leq 
108\kpz(\ep+3\kpz)\leq\f{2}{3}.
\eeqs
We thus have:
$$\det S\geq \f{1}{4b^2(y)}\f{\f{1}{3}+b(x)b(y)-x\cdot y}{b(x)(b(y)+b(x))}.$$
It is thus easy to see that:
\beq\label{lowerbdd}
|\p_{x}^{\alpha}\p_{y}^{\beta}(\f{1}{\det Q})|\lesssim_{\alpha,\beta,\kpz}\langle y \rangle^2 \langle x \rangle(\langle x\rangle+\langle y \rangle)^2.
\eeq
Besides, direct computations shows that:
\beno
|\p_{x}^{\alpha}\p_{y}^{\beta}S(x,y)|\lesssim_{\alpha,\beta,\kpz}\langle y\rangle^{-1},
\eeno
which combined with \eqref{lowerbdd}, yields:
\beno
|\p_{x}^{\alpha}\p_{y}^{\beta}S^{-1}(x,y)|\lesssim_{\alpha,\beta,\kpz}(\langle x\rangle+\langle y\rangle)^3.
\eeno
\end{proof}

\section*{Acknowledgement}{The author 
is indebted to his supervisor Professor Fr\'ed\'eric Rousset
for his kind guidance,
careful reading of the paper and constructive suggestions which improve the presentation. The author also benefits from the conversation with professor Corentin Audiard on the uniform stability of the Navier-Stokes-Korteweg system.

\bibliographystyle{abbrv}

\nocite{*}
\bibliography{2dNSP}

\end{document}